\def\balign#1\ealign{\begin{align}#1\end{align}}
\def\baligns#1\ealigns{\begin{align*}#1\end{align*}}
\def\balignat#1\ealign{\begin{alignat}#1\end{alignat}}
\def\balignats#1\ealigns{\begin{alignat*}#1\end{alignat*}}
\def\bitemize#1\eitemize{\begin{itemize}#1\end{itemize}}
\def\benumerate#1\eenumerate{\begin{enumerate}#1\end{enumerate}}
\newenvironment{talign*}
 {\csname align*\endcsname}
 {\endalign}
\newenvironment{talign}
 {\csname align\endcsname}
 {\endalign}
\def\balignst#1\ealignst{\begin{talign*}#1\end{talign*}}
\def\balignt#1\ealignt{\begin{talign}#1\end{talign}}
\let\originalleft\left
\let\originalright\right
\renewcommand{\left}{\mathopen{}\mathclose\bgroup\originalleft}
\renewcommand{\right}{\aftergroup\egroup\originalright}
\def\ind{\mathbbm 1}
\def\tinycitep*#1{{\tiny\citep*{#1}}}
\def\tinycitealt*#1{{\tiny\citealt*{#1}}}
\def\tinycite*#1{{\tiny\cite*{#1}}}
\def\smallcitep*#1{{\scriptsize\citep*{#1}}}
\def\smallcitealt*#1{{\scriptsize\citealt*{#1}}}
\def\smallcite*#1{{\scriptsize\cite*{#1}}}
\def\<{\left\langle} % Angle brackets
\def\>{\right\rangle}
\def\defeq{\triangleq} % defined equal to
\newcommand{\inner}[1]{{\left\langle #1 \right\rangle}} % inner product
\def\P{\mbb{P}} % Probability symbol
\DeclareSymbolFont{rsfs}{U}{rsfs}{m}{n}
\DeclareSymbolFontAlphabet{\mathscrsfs}{rsfs}
\newcommand{\eqdist}{\stackrel{d}{=}}
\newtheorem{theorem}{Theorem}
\newtheorem{lemma}[theorem]{Lemma}
\newtheorem{corollary}[theorem]{Corollary}
\newtheorem{definition}[theorem]{Definition}
\renewenvironment{proof}{\noindent\textbf{Proof.}\hspace*{.3em}}{\qed \vspace{.1in}}
\newenvironment{proof-sketch}{\noindent\textbf{Proof Sketch}
  \hspace*{1em}}{\qed\bigskip\\}
\newenvironment{proof-idea}{\noindent\textbf{Proof Idea}
  \hspace*{1em}}{\qed\bigskip\\}
\newenvironment{proof-of-lemma}[1][{}]{\noindent\textbf{Proof of Lemma {#1}}
  \hspace*{1em}}{\qed\\}
  \newenvironment{proof-of-proposition}[1][{}]{\noindent\textbf{Proof of Proposition {#1}}
  \hspace*{1em}}{\qed\\}
\newenvironment{proof-of-theorem}[1][{}]{\noindent\textbf{Proof of Theorem {#1}}
  \hspace*{1em}}{\qed\\}
\newenvironment{proof-attempt}{\noindent\textbf{Proof Attempt}
  \hspace*{1em}}{\qed\bigskip\\}
\newtheorem*{remark*}{Remark}
\newtheorem{proposition}[theorem]{Proposition}
\newtheorem{assumption}{Assumption}
\theoremstyle{definition}
\newtheorem{example}[theorem]{Example}
\renewcommand{\Pr}[1]{\mathbb{P}\left( #1 \right)}
\definecolor{OliveGreen}{rgb}{0,0.6,0}
\definecolor{OliveGreen}{rgb}{0,0.6,0}
\renewcommand{\paragraph}{%
  \@startsection{paragraph}{4}%
  {\z@}{1.25ex \@plus 1ex \@minus .2ex}{-1em}%
  {\normalfont\normalsize\bfseries}%
}
\begin{document}

\title{Improved Discretization Analysis for Underdamped\\ Langevin Monte Carlo}

 \author{Matthew Zhang\thanks{
  Department of Computer Science at
  University of Toronto, and Vector Institute, \texttt{matthew.zhang@mail.utoronto.ca}
} \ \ \ \ \ \ 
 Sinho Chewi\thanks{
  Department of Mathematics at
  Massachusetts Institute of Technology, \texttt{schewi@mit.edu}
 } \ \ \ \ \ \
 Mufan (Bill) Li\thanks{
  Department of Statistical Sciences at
  University of Toronto, and Vector Institute, \texttt{mufan.li@mail.utoronto.ca}
} \\
 Krishnakumar Balasubramanian\thanks{
 Department of Statistics at University of California, Davis, \texttt{kbala@ucdavis.edu}
 } \ \
 Murat A.\ Erdogdu\thanks{
  Department of Computer Science at
  University of Toronto, and Vector Institute, \texttt{erdogdu@cs.toronto.edu}
 }
}

\maketitle

\begin{abstract}

Underdamped Langevin Monte Carlo (ULMC) is an algorithm used to sample from unnormalized densities by leveraging the momentum of a particle moving in a potential well. 
We provide a novel analysis of ULMC, motivated by two central questions: (1)~\emph{Can we obtain improved sampling guarantees beyond strong log-concavity?} (2)~\emph{Can we achieve acceleration for sampling?}

For (1), prior results for ULMC only hold under a log-Sobolev inequality together with a restrictive Hessian smoothness condition. 
Here, we relax these assumptions by removing the Hessian smoothness condition and by considering distributions satisfying a Poincar\'e inequality.
Our analysis achieves the state of art dimension dependence, and is also flexible enough to handle weakly smooth potentials. 
As a byproduct, we also obtain the first KL divergence guarantees for ULMC without Hessian smoothness under strong log-concavity, which is based on a new result on the log-Sobolev constant along the underdamped Langevin diffusion.

For (2), the recent breakthrough of Cao, Lu, and Wang (2020) established the first accelerated result for sampling in continuous time via PDE methods. Our discretization analysis translates their result into an algorithmic guarantee, which indeed enjoys better condition number dependence than prior works on ULMC, although we leave open the question of full acceleration in discrete time.

Both (1) and (2) necessitate R\'enyi discretization bounds, which are more challenging than the typically used Wasserstein coupling arguments.  
We address this using a flexible discretization analysis based on Girsanov's theorem that easily extends to more general settings. 

\end{abstract}

\section{Introduction}
The problem of sampling from a high-dimensional distribution $\pi \propto \exp(-U)$ on $\mbb{R}^d$, 
 when the normalizing constant is unknown and only the potential $U$ is given, 
 has increasing relevancy in a number of application domains, including economics, physics, and scientific computing \cite{johannes2010mcmc, von2011bayesian, kobyzev2020normalizing}.
Recent progress on this problem has been driven by a strong connection with the field of optimization, starting from the seminal work of~\cite{jordan1998variational}; see~\cite{chewisamplingbook} for an exposition.

Given the success of momentum-based algorithms for optimization~\cite{nesterov1983method}, it is natural to investigate momentum-based algorithms for sampling. 
The hope is that such methods can improve the dependence of the convergence estimates on key problem parameters, such as the condition number $\kappa$, the dimension $d$, and the error tolerance $\epsilon$.
One such method is underdamped Langevin Monte Carlo (ULMC), which is a discretization of the underdamped Langevin diffusion (ULD):
\begin{align}
\label{eq:ULD}\tag{ULD}
\begin{aligned}
    &\D x_t = v_t \, \D t \,, \\
    &\D v_t = -\gamma v_t \, \D t - \nabla U(x_t) \, \D t +  \sqrt{2 \gamma} \, \D B_t \,, \nonumber
\end{aligned}
\end{align}
where $\{B_t\}_{t\geq 0}$ is the standard $d$-dimensional Brownian motion. 
The stationary distribution of ULD is $\mu(x,v) \propto \exp(-U(x) - \norm v^2/2)$, and in particular, the $x$-marginal of $\mu$ is the desired target distribution $\pi$.
Therefore, by taking a small step size for the discretization and a large number of iterations, ULMC will yield an approximate sample from $\pi$. 

We also note that in the limiting case where $\gamma = 0$, ULMC closely resembles the Hamiltonian Monte Carlo algorithm, which is known to achieve acceleration and better discretization error in some limited settings \cite{vishnoi2021introduction,apers2022hamiltonian,bou2022unadjusted, wang2022accelerating}. 

While there is currently no analysis of ULMC that yields acceleration for sampling (i.e., square root dependence on the condition number $\kappa$), ULMC is known to improve the dependence on other parameters such as the dimension $d$ and the error tolerance $\epsilon$~\cite{cheng2018sharp,cheng2018underdamped, dalalyan2020sampling}, at least for guarantees in the Wasserstein metric.
However, compared to the extensive literature on the simpler (overdamped) Langevin Monte Carlo (LMC) algorithm, existing analyses of ULMC are not easily extended to stronger performance metrics such as the $\msf{KL}$ and R\'enyi divergences. 
In turn, this limits the scope of the results for ULMC; see the discussion in Section~\ref{scn:contributions}.

In light of these shortcomings, in this work, we ask the following two questions: 
\vspace{-0.5em}
\begin{enumerate}[itemsep=0em]
    \item \emph{Can we obtain sampling guarantees beyond the strongly log-concave case via ULMC?}
    \item \emph{Can we obtain accelerated convergence guarantees for sampling via ULMC?}
\end{enumerate}

\subsection{Our Contributions}\label{scn:contributions}

We address the two questions above by providing a new Girsanov discretization bound for ULMC\@. Our bound holds in the strong R\'enyi divergence metric and applies under general assumptions (in particular, it does not require strong log-concavity of the target $\pi$, and it allows for weakly smooth potentials). Consequently, it leads to the following new state-of-the-art results for ULMC\@:

\vspace{-0.5em}
\begin{itemize}[itemsep=0em]
    \item We obtain an $\epsilon^2$-guarantee in KL divergence with iteration complexity $\widetilde{\mc{O}}(\kappa^{3/2} d^{1/2}\epsilon^{-1})$ for strongly log-concave and log-smooth distributions, which removes the Lipschitz Hessian assumption of~\cite{maetal2021nesterovmcmc}; here, $\kappa$ is the condition number of the distribution.
    \item We obtain an $\epsilon$-guarantee in TV distance with iteration complexity $\widetilde{\mc{O}}(C_{\msf{LSI}}^{3/2} L^{3/2} d^{1/2} \epsilon^{-1})$ under a log-Sobolev inequality (LSI) and $L$-smooth potential, again without assuming a Lipschitz Hessian.
    This is the state-of-the-art guarantee for this class of distributions with regards to dimension dependence.
    \item We obtain $\epsilon^2$-guarantees in the stronger R\'enyi divergence metric of any order in $[1, 2)$ with iteration complexity $\widetilde{\mc O}(C_{\msf{PI}}^{3/2} L^{3/2} \,d^2\epsilon^{-1})$ under a Poincar\'e inequality and a $L$-smooth potential, which improves to
    $\widetilde{\mc{O}}(C_{\msf{PI}} L d^{2}\epsilon^{-1})$ under log-concavity.
    These are the first guarantees for ULMC known in these settings, and they substantially improve upon the corresponding results for LMC in these settings~\cite{chewi2021analysis}.

    \item In the Poincar\'e case, we also consider weakly smooth potentials (i.e., H\"older continuous gradients with coefficient $s\in(0,1]$), which more realistically reflect the delicate smoothness properties of distributions satisfying a Poincar\'e inequality. 
\end{itemize}

We now discuss our results in more detail in the context of the existing literature.

\paragraph{Guarantees under Weaker Assumptions.}
Prior works,~\cite{cheng2018underdamped, dalalyan2020sampling, ganeshtalwar2020renyi}, require strong log-concavity of the target. 
Whereas for works which operate under isoperimetric assumptions, we are only aware of~\cite{maetal2021nesterovmcmc}, which further assumes a restrictive Lipschitz Hessian condition for the potential.
In contrast, we make no such assumption on the Hessian of $U$, and we obtain results under a log-Sobolev inequality (LSI), or under the even weaker assumption of a Poincar\'e inequality (PI), for which sampling analysis is known to be challenging~\cite{chewi2021analysis}.

As noted above, our result for sampling from distributions satisfying LSI and smoothness assumptions are state-of-the-art with regards to the dimension dependence ($d^{1/2}$); in contrast, the previous best results had linear dependence on $d$~\cite{chewi2021analysis, chen2022improved}.
Moreover, in the Poincar\'e case, we can also consider weakly smooth potentials, which have not been previously considered in the context of ULMC\@.

\paragraph{Guarantees in Stronger Metrics.}
Key to achieving these results is our discretization analysis in the R\'enyi divergence metric.
Indeed, the continuous-time convergence results for ULD under LSI or PI hold in the KL or R\'enyi divergence metrics, and translating these guarantees to the ULMC algorithm necessitates studying the discretization in R\'enyi. This is the main technical challenge, as we can no longer rely on Wasserstein coupling arguments which are standard in the literature~\cite{cheng2018underdamped, dalalyan2020sampling}.
Two notable exceptions are the R\'enyi discretization argument of~\cite{ganeshtalwar2020renyi}, which incurs suboptimal dependence on $\varepsilon$, and the KL divergence argument of~\cite{maetal2021nesterovmcmc}, which requires stringent smoothness assumptions.

In this work, we provide the first KL divergence guarantee for sampling from strongly log-concave and log-smooth distributions via ULMC without Hessian smoothness, based on a new LSI along the trajectory (discussed further below).

\paragraph{Towards Acceleration in Sampling.}
Our work is also motivated by the breakthrough result of~\cite{caoluwang2020underdamped}, which achieves for the first time an accelerated convergence guarantee for ULD in continuous time. Our discretization bound allows us to convert this result into an algorithmic guarantee which indeed improves the dependence on the condition number $\kappa$\footnote{In the case of Poincar\'e inequality, the condition number is $\kappa \deq C_{\msf{PI}} L$, which is consistent with the definition in the strongly log-concave case.} 
for ULMC, whereas prior results incurred a dependence of at least $\kappa^{3/2}$; our dependence is linear in $\kappa$ in the log-concave case.
While this still falls short of proving full acceleration for sampling (i.e., an improvement to $\kappa^{1/2}$), our result provides further hope for achieving acceleration via ULMC\@.

\paragraph{A New Log-Sobolev Inequality along the ULD Trajectory\@.} 
Finally, en route to proving the KL divergence guarantee in the strongly log-concave case, we establish a new log-Sobolev inequality along ULD (Proposition~\ref{lem:lsi_ct}), which is of independent interest.
While such a result was previously known for the overdamped Langevin diffusion, to the best of our knowledge it is new for the underdamped version.

\subsection{More Related Work}
\label{subsec:related_work}

\paragraph{Langevin Monte Carlo.}
For the standard LMC algorithm, non-asymptotic rate estimates in $\mc{W}_2$ were first demonstrated in \cite{dalalyan2017analogy}, for the class of strongly log-concave measures. 
Guarantees in $\msf{KL}$ divergence under a log-Sobolev inequality were obtained by \cite{vempalawibisono2019ula}, which developed an appealing continuous-time framework for analyzing LMC under functional inequalities. 
With some difficulty, this result was extended to R\'enyi divergences by \cite{ganeshtalwar2020renyi, erdhoszha22chisq}.
At the same time, a body of literature studied convergence in $\msf{KL}$ divergence under tail-growth conditions such as dissipativity \cite{raginskyrakhlintelgarsky2017sgld,erdogdu2018global,erdogduhosseinzadeh2021tailgrowth, mouetal22langevin}, which usually imply functional inequalities. 

Most related to the current work, \cite{chewi2021analysis} extended the continuous-time approach from \cite{vempalawibisono2019ula} to R\'enyi divergences, and moreover introduced a novel discretization analysis using Girsanov's theorem, which also holds for weakly smooth potentials.
The present work builds upon the Girsanov techniques introduced in \cite{chewi2021analysis} to study ULMC\@.

\paragraph{Underdamped Langevin Diffusion.} 
ULMC is a discretization of the underdamped Langevin diffusion~\eqref{eq:ULD}. 
First studied by \cite{kolmogoroff1934zufallige} and \cite{hormander1967hypoelliptic} in their pioneering works on hypoellipticity, 
it was quickly understood that establishing quantitative convergence to stationarity is technically challenging, let alone capturing any acceleration phenomenon. 
The seminal work of \cite{villani2002limites, villani2009hypocoercivity} developed the hypocoercivity approach, providing the first convergence guarantees under functional inequalities;
see also~\cite{herau2006hypocoercivity, dolbeault2009hypocoercivity, dolbeault2015hypocoercivity, roussel2018spectral}. We also refer to~\cite{bernard2022hypocoercivity} and references therein for a comprehensive discussion of qualitative and quantitative convergence results for ULD.

As mentioned earlier, the most recent breakthrough
by \cite{caoluwang2020underdamped} achieved acceleration in continuous time in $\chi_2$-divergence when the target distribution $\pi$ is log-concave. 
This work was built on an approach using the dual Sobolev space $\mc H^{-1}$ \cite{albritton2019variational}. 
However, since this method relies on the duality of the $L^2$ space and its connections to the Poincar\'e inequality, it is difficult to extend to $L^p$ spaces or to other functional inequalities. 

\paragraph{Other Discretizations.}
Many alternative discretization schemes have since been proposed in this setting \cite{shenlee2019randomizedmidpoint, li2019stochastic,hebalasubramanianerdogdu2020rm,foster2021shifted,monmarche2021high, foster2022high,johnston2023kinetic}, albeit all of the analyses up to this point were limited to $\mc W_2$ distance and did not achieve acceleration in terms of the condition number $\kappa$. 

\subsection{Organization}

The remainder of this paper will be organized as follows. 
In Section \ref{sec:background}, we will review the required definitions and assumptions. 
In Section \ref{sec:theorems}, we will state our main results and briefly sketch their proofs. 
In Section \ref{sec:applications}, we highlight several implications of our theorems through some examples. 
In Section \ref{sec:proof_sketch}, we briefly sketch the proofs of our main results, before concluding in Section \ref{sec:conclusion} with a discussion of future directions.

\section{Background}\label{sec:background}
\subsection{Notation}
Hereafter, we will use $\norm{\cdot}$ to denote the $2$-norm on vectors.
In general, we will only work with measures that admit densities on $\R^d$, and we will abuse notation slightly to conflate a measure with its density for convenience. 
The notation $a = \mc{O}(b)$ signifies that there exists an absolute constant $C>0$ such that $a \leq Cb$, and $\widetilde{\mc{O}}(\cdot)$ hides logarithmic factors. 
Similarly we write $a = \Theta(b)$ if there exist constants $c,C>0$ such that $cb \leq a \leq Cb$, and $\widetilde{\Theta}(\cdot)$ hides logarithmic factors. 
The stationary measure (in the position coordinate) is $\pi \propto \exp(-U)$, and $U$ will be referred to as the potential. 
We will use $L^2(\pi)$ to denote test functions $f$ where $\mathbb{E}_\pi \, f^2 < \infty$, and $\mc H^1(\pi)$ to denote weakly differentiable $L^2(\pi)$ functions where $\partial_{x_i} f \in L^2(\pi)$. 
Finally, the notations $\lesssim$, $\gtrsim$, $\asymp$ represent $\leq$, $\geq$, $=$ up to absolute constants.
Further notations are introduced in subsequent sections.

\subsection{Definitions and Assumptions}

In this subsection, we will define the relevant processes, divergences, and isoperimetric inequalities.
Firstly, we define the ULMC algorithm by the following stochastic differential equation (SDE): 
\begin{align}\label{eq:ulmc_sde}\tag{ULMC}
\begin{aligned}
    \D x_{t} &= v_t \, \D t\,, \\
    \D v_{t} &= - \gamma v_t \, \D t + \nabla U(x_{kh}) \, \D t + \sqrt{2\gamma} \, \D B_t\,, 
    \end{aligned}
\end{align}
where $t \in [kh, (k+1)h)$ for some step size $h>0$. 
We note this formulation of ULMC can be integrated in closed form (see Appendix \ref{sec:explicit_form}).

Next, we define a few measures of distance between two probability distributions $\mu$ and $\pi$ on $\mathbb{R}^d$. We define the total variation distance as
\begin{align}
    \norm{\mu-\pi}_{\msf{TV}} 
    \coloneqq \sup|\mu(A) - \pi(A)| 
    \,, 
\end{align}
where the $\sup$ is taken over Borel measurable sets $A \subset \mathbb{R}^d$. 
We further define the $\msf{KL}$ divergence as
\begin{align}
    \msf{KL}(\mu \mmid \pi ) \coloneqq \int \frac{\D \mu}{\D \pi} \log \frac{\D \mu}{\D \pi} \, \D \pi \,, 
\end{align}
and $\msf{KL}(\mu \mmid \pi ) \coloneqq +\infty$ if $\mu$ is not absolutely continuous with respect to $\pi$. 
Finally, we define the R\'enyi divergence with order $q > 1$ as
\begin{align*}
    \eu R_q(\mu \mmid \pi) \deq \frac{1}{q-1} \log \int \Bigl |\frac{\D \mu}{\D \pi} \Bigr|^q \, \D \pi\,,
\end{align*}
and similarly $\eu R_q(\mu \mmid \pi) \coloneqq +\infty$ if $\mu \not\ll \pi$. 
The R\'enyi divergence upper bounds $\msf{KL}$ for all orders, i.e., $\msf{KL}(\mu \mmid \pi) \leq \eu R_q(\mu \mmid \pi)$ for any order $q > 1$, and $\eu R_q$ is monotonic in $q$. 
In particular, when $q=2$, we also get $\chi_2$ divergence, i.e., $\chi_2(\mu \mmid \pi) =  \exp(\eu R_2(\mu \mmid \pi) )-1$. 

Our primary results are provided under the following smoothness conditions.
\begin{definition}[Smoothness]\label{as:smooth}
The potential $U$ is $(L,s)$-\textbf{weakly smooth} if $U$ is differentiable and $\nabla U$ is $s$-H\"older continuous satisfying 
\begin{align} \label{eq:holder}
    \norm{\nabla U(x) - \nabla U(y)} \leq L\, \norm{x-y}^s\,,
\end{align}
for all $x, y \in \mathbb{R}^d$ and some $L \geq 0$, $s\in (0,1]$. In the particular case where $s=1$, we say that the potential is $L$-smooth, or that $\nabla U$ is $L$-Lipschitz.
\end{definition}

We conduct three lines of analysis. The first assumes strong convexity of the potential, i.e.:
\begin{definition}[Strong Convexity] \label{def:str_cvx}
    The potential $U$ is $m$-\textbf{strongly convex} for some $m \geq 0$ if for all $x, y \in \R^d$:
    \begin{align*}
        \inner{\nabla U(x) - \nabla U(y), x-y} \geq \frac{m}{2}\, \norm{x-y}^2\,.
    \end{align*}
\end{definition}
In the case $m = 0$ above, we say that $U$ is convex. If a potential function $U$ is (strongly) convex, then we say the distribution $\pi \propto \exp(-U)$ is (strongly) log-concave.

A second, strictly more general assumption is the log-Sobolev inequality. 
\begin{definition}[Log-Sobolev Inequality]\label{def:lsi}
A measure $\pi$ satisfies a \textbf{log-Sobolev inequality} (LSI) with parameter $C_{\msf{LSI}} > 0$ if for all $g\in \mc H^1(\pi)$ :
\begin{align}\label{eq:LSI} \tag{LSI}
    \msf{ent}_{\pi}(g^2) \leq 2C_{\msf{LSI}} \E_{\pi}[\norm{\nabla g}^2]\,,
\end{align}
where $\msf{ent}_\pi(g^2) \deq \E_\pi[g^2 \log (g^2/\E_\pi[g^2])]$. 
\end{definition}
An $m$-strongly convex potential is known to satisfy \eqref{eq:LSI} with constant $m^{-1}$ \cite{bakrygentilledoux2014}. More generally, we can consider the following weaker isoperimetric inequality, which corresponds to a linearization of \eqref{eq:LSI}.
\begin{definition}[Poincar\'e Inequality]
    A measure $\pi$ satisfies a \textbf{Poincar\'e inequality} with parameter $C_{\msf{PI}} > 0$ if for all $g \in \mc H^1(\pi)$ :
\begin{align}\label{eq:pi}\tag{PI}
    \msf{var}_\pi(g) &\leq C_{\msf{PI}} \E_\pi [\norm{\nabla g}^2]\,,
\end{align}
where $\msf{var}_{\pi}(g) = \E_\pi[\abs{g-\E_\pi[g]}^2]$.
\end{definition}
Conditions \eqref{eq:LSI} and \eqref{eq:pi} are standard assumptions made on the stationary distribution in the theory of Markov diffusions as well as sampling \cite{bakrygentilledoux2014, vempalawibisono2019ula, chewi2021analysis, chewisamplingbook}. They are known to be satisfied by a broad class of targets such as log-concave distributions or certain mixture distributions \cite{chen2021kls, chenchewinilesweed2021dimfreelsi}.

We define the condition number for an $m$-strongly log-concave target with $(L,s)$-weakly smooth potential as $\kappa \defeq L/m$. In the case where instead of strong convexity, the target only satisfies \eqref{eq:LSI} (respectively \eqref{eq:pi}), the condition number is instead $\kappa \defeq C_{\msf{LSI}} L$ (respectively $\kappa \defeq C_{\msf{PI}} L$).

Finally, we collect several mild assumptions to simplify computing the bounds below, which have also appeared in prior work; see in particular the discussion in \cite[Appendix A]{chewi2021analysis}.
\begin{assumption}\label{as:misc}
The expectation of the norm (in the position coordinate) is quantitatively bounded by some constant, $\E_\pi[\norm{\cdot}] \leq \mf{m} = \widetilde{\mc{O}}(d)$\footnote{This holds for instance when $U(x) = \norm{x}^\alpha$ for $1 \le \alpha \le 2$.}, for some constant $\mf m < \infty$. 
Furthermore, we assume that $\nabla U(0) = 0$ (without loss of generality), and that  $U(0) - \min U = \widetilde{\mc O}(d)$.
\end{assumption}

\section{Main Theorems}\label{sec:theorems}

In the sequel, we always take the initial distribution of the  momentum $\rho_0$ to be equal to the stationary distribution $\rho \propto \exp(-\norm{\cdot}^2/2)$. Then, under Assumption \ref{as:misc} we can find an initial distribution $\pi_0$ for the position which is a centered Gaussian with variance specified in Appendix \ref{scn:proofs}, such that $\pi_0$ has some appropriately bounded initial divergence (e.g. $\msf{KL}, \eu R_q$) with respect to $\pi$.
Lastly, we initialize ULMC by sampling from the distribution $\mu_0(x,v) = \pi_0(x) \times \rho_0(v)$, i.e. with $x$ and $v$ independent.

\subsection{Convergence in \texorpdfstring{$\msf{KL}$}{KL} and \texorpdfstring{$\msf{TV}$}{TV}}
In order to state our results for ULMC in $\msf{KL}$ and $\msf{TV}$, we leverage the following result in continuous-time from \cite{maetal2021nesterovmcmc}, which relies on an entropic hypocoercivity argument, after a time-change of the coordinates (see Appendix~\ref{scn:entropic_hypo} for a proof).
\begin{lemma}[{Adapted from~\cite[Proposition 1]{maetal2021nesterovmcmc}}]\label{lem:lyapunov_decay}
    Define the Lyapunov functional
    \begin{align}\label{eq:F_q}
        \eu F(\mu' \mmid \mu) \defeq \msf{KL}(\mu' \mmid \mu) + \E_{\mu'}\bigl[\bigl\lVert \mf M^{1/2}\, \nabla \log \frac{\mu'}{\mu} \bigr\rVert^2 \bigr]\,, \  \quad \text{where} \ \ \mf M = \begin{bmatrix} \frac{1}{4L} & \frac{1}{\sqrt{2L}} \\ \frac{1}{\sqrt{2L}} & 4 \end{bmatrix}
        \otimes I_d
        \,.
    \end{align}
    For targets $\pi$ that are $L$-smooth and satisfy \eqref{eq:LSI} with parameter $C_{\msf{LSI}}$, let $\gamma = 2\sqrt{2L}$. Then the law $\mu_t$ of ULD satisfies
    \begin{align*}
        \partial_t \eu F(\mu_t \mmid \mu) \leq -\frac{1}{10C_{\msf{LSI}}\,\sqrt{2L}}\, \eu F(\mu_t \mmid \mu)\,.
    \end{align*}
\end{lemma}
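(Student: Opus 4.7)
The plan is to follow the entropic hypocoercivity strategy of Villani, specialized to the distinguished choice of $\mf M$ and $\gamma = 2\sqrt{2L}$ prescribed in the statement. Writing $h_t \defeq \mu_t/\mu$ and $\xi_t \defeq \nabla \log h_t = (\xi_t^x, \xi_t^v) \in \R^{2d}$ for the joint score, the relative density satisfies $\partial_t h_t = \mc L^* h_t$, where $\mc L^*$ is the $L^2(\mu)$-adjoint of the ULD generator. A direct entropy dissipation computation yields the familiar degenerate identity
\begin{align*}
\partial_t \msf{KL}(\mu_t \mmid \mu) = -\gamma \, \E_{\mu_t}\bigl[\lVert \xi_t^v \rVert^2\bigr],
\end{align*}
which only controls the velocity component of the score; the missing $\xi_t^x$ dissipation is exactly what the modified Fisher augmentation is designed to recover.

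Next, I would differentiate $\mc I_{\mf M}(\mu_t \mmid \mu) \defeq \E_{\mu_t}\bigl[\lVert \mf M^{1/2}\, \xi_t \rVert^2\bigr]$, written conveniently as $\int h_t \, \langle \xi_t, (\mf M \otimes I_d)\, \xi_t \rangle \, d\mu$. One commutes $\nabla_x$ and $\nabla_v$ with the generator $\mc L$ (the transport part produces a skew term coupling $\xi^x$ and $\xi^v$ and a $\nabla^2 U(x)$ term acting on $\xi^v$, while the velocity diffusion contributes a dissipative second-order piece) and integrates by parts against $\mu$. After straightforward bookkeeping this gives a decomposition
\begin{align*}
\partial_t \mc I_{\mf M}(\mu_t \mmid \mu) = -2\, \E_{\mu_t}\bigl[\langle \nabla^2 \log h_t, \mf N\, \nabla^2 \log h_t\rangle_F\bigr] - 2\, \E_{\mu_t}\bigl[\langle \xi_t, \mf A(x)\, \xi_t\rangle\bigr],
\end{align*}
where $\mf N \succeq 0$ (so the first term is non-positive) and $\mf A(x)$ is a $2d \times 2d$ symmetric matrix whose entries are explicit functions of $\gamma$, $\mf M$, and the Hessian $\nabla^2 U(x)$.

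The heart of the argument is a purely algebraic estimate: with $\mf M$ as given, $\gamma = 2\sqrt{2L}$, and the smoothness bound $\lVert \nabla^2 U(x)\rVert \leq L$, one verifies that the combined dissipation satisfies
\begin{align*}
\partial_t \eu F(\mu_t \mmid \mu) \leq -c_0 \sqrt{L}\, \E_{\mu_t}\bigl[\lVert \xi_t\rVert^2\bigr]
\end{align*}
for an absolute constant $c_0 > 0$. The scalar $2 \times 2$ matrix in $\mf M$ has determinant $1/(2L)$ and trace $\Theta(1)$; its off-diagonal entry $1/\sqrt{2L}$ is the unique choice that generates the previously missing $\lVert \xi^x\rVert^2$ dissipation via the transport commutator, while still dominating the sign-indefinite cross terms coming from $\nabla^2 U$. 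Concretely, a Cauchy--Schwarz bound with weight $\sqrt{L}$ converts the $\pm\langle \xi^x, \nabla^2 U\, \xi^v\rangle$ contributions into $\sqrt{L}\, \lVert \xi^x\rVert^2 + \sqrt{L}\, \lVert \xi^v\rVert^2$ pieces, which are absorbed by the diagonal entries of $\mf A$.

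Finally, I would close the Gr\"onwall argument using the LSI for the joint target. Since $\rho$ is standard Gaussian and hence satisfies \eqref{eq:LSI} with constant $1$, the tensorization property of LSI gives that $\mu = \pi \otimes \rho$ satisfies it with constant at most $C_{\msf{LSI}} \vee 1$. In the non-trivial regime $C_{\msf{LSI}} L \gtrsim 1$, this yields $\msf{KL}(\mu_t \mmid \mu) \lesssim C_{\msf{LSI}}\, \E_{\mu_t}[\lVert \xi_t\rVert^2]$, and since $\lambda_{\max}(\mf M) = O(1)$ we also have $\mc I_{\mf M}(\mu_t \mmid \mu) \lesssim \E_{\mu_t}[\lVert \xi_t\rVert^2]$. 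Combining these with the dissipation estimate produces
\begin{align*}
\partial_t \eu F(\mu_t \mmid \mu) \leq -\frac{c_1}{C_{\msf{LSI}}\, \sqrt{L}}\, \eu F(\mu_t \mmid \mu),
\end{align*}
and tracking constants carefully reproduces the prefactor $1/(10\, C_{\msf{LSI}}\, \sqrt{2L})$. The main obstacle is the algebraic step: verifying positivity of $\mf A$ with the sharp $\sqrt{L}$ scaling requires the off-diagonal coupling in $\mf M$ to exactly cancel the worst-case cross terms from $\nabla^2 U$, and the choice $\gamma = 2\sqrt{2L}$ to balance friction against smoothness. This is essentially the bookkeeping performed in~\cite{maetal2021nesterovmcmc}, and I would reproduce it in the appendix with the explicit numerical constants.
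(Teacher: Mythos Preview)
Your outline is correct and would work, but it takes a much longer route than the paper. The paper does not redo any of the hypocoercivity computation: it performs a time-and-velocity rescaling $(\tilde x_{\tilde t}, \tilde v_{\tilde t}) = (x_{\tilde t\sqrt{2L}},\, (2L)^{-1/2} v_{\tilde t\sqrt{2L}})$, observes that the rescaled process lands exactly on the ULD instance to which \cite[Proposition~1]{maetal2021nesterovmcmc} applies (with their parameters $\tilde\gamma = 2$, $\xi = 2L$), invokes that proposition as a black box to obtain decay of the corresponding Lyapunov functional at rate $1/(10\,C_{\msf{LSI}})$ in $\tilde t$, and then undoes the change of variables. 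The matrix $\mf M$ in the statement is precisely the pullback of Ma et al.'s matrix $\mf N = L^{-1}\bigl[\begin{smallmatrix}1/4 & 1/2\\ 1/2 & 2\end{smallmatrix}\bigr]\otimes I_d$ under this rescaling, and the extra $1/\sqrt{2L}$ in the rate is just the Jacobian of the time change. The whole proof is a short paragraph.

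Your approach instead reproduces the internal content of Ma et al.'s argument with the specific $\mf M$ and $\gamma$ substituted in from the start. This is more self-contained and makes the mechanism (the off-diagonal entry of $\mf M$ generating $\lVert \xi^x\rVert^2$ dissipation via the transport commutator) explicit, at the cost of repeating several pages of algebra that the paper simply cites. If you keep your route, one point to tighten: your closing step uses $\lambda_{\max}(\mf M) = O(1)$, which requires $L \gtrsim 1$ because of the $1/(4L)$ entry; this is covered by the ``non-trivial regime $C_{\msf{LSI}} L \gtrsim 1$'' you already flagged, but should be stated explicitly rather than absorbed into the constant-tracking at the end.
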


We now proceed to state our main results more precisely. First, we obtain the following KL divergence guarantee under strong log-concavity and smoothness.

\begin{theorem}[{Convergence in $\msf{KL}$ under Strong Log-Concavity}]\label{thm:kl_slc}
    Let the potential $U$ be $m$-strongly convex and $L$-smooth, and additionally satisfy Assumption \ref{as:misc}. Then, for 
    \begin{align*}
    h = \widetilde{\Theta}\Bigl(\frac{\epsilon m^{1/2}}{Ld^{1/2}}\Bigl)\quad\text{and}\quad  \gamma \asymp \sqrt{L},
    \end{align*}
   the following holds for $\hat \mu_{Nh}$, the law of the $N$-th iterate of ULMC initialized at a centered Gaussian (with variance specified in Appendix~\ref{scn:proofs}):
    \begin{align*}
        \msf{KL}(\hat \mu_{Nh} \mmid \mu) \leq \epsilon^2 \qquad \text{after} \qquad N = \widetilde{\Theta} \Bigl(\frac{\kappa^{3/2}\, d^{1/2}}{\epsilon}\Bigr)\quad \text{iterations}\,.
    \end{align*}
\end{theorem}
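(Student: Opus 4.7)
The proof combines three ingredients: (i) the continuous-time exponential decay of the Lyapunov functional $\eu F$ along ULD, given by the earlier Lemma; (ii) a Girsanov-based discretization bound comparing ULMC to ULD on path space; and (iii) the new LSI along the ULD trajectory (Proposition~\ref{lem:lsi_ct}), used to descend from the hypocoercive bound to a bona fide KL bound on the ULMC iterate. I would initialize ULMC at $\mu_0 = \pi_0 \otimes \rho_0$, with $\pi_0$ a centered Gaussian whose variance is tuned so that Assumption~\ref{as:misc} yields $\eu F(\mu_0 \mmid \mu) = \widetilde{\mc O}(d)$. The Lemma then gives $\eu F(\mu_{Nh}^{\text{ULD}} \mmid \mu) \leq e^{-Nh/(10\sqrt{2L}\,C_{\msf{LSI}})}\,\widetilde{\mc O}(d)$, with $C_{\msf{LSI}} = 1/m$ by strong convexity.

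Next, I would couple ULMC with ULD starting from $\mu_0$ through the same Brownian motion. Since the drift discrepancy appears only in the $v$-component, Girsanov's theorem yields the path-space identity
\begin{align*}
    \msf{KL}\bigl(\mathcal{L}^{\text{ULMC}}_{[0,Nh]} \mmid \mathcal{L}^{\text{ULD}}_{[0,Nh]}\bigr) = \frac{1}{4\gamma}\,\E\int_0^{Nh}\norm{\nabla U(\hat x_{\lfloor t/h\rfloor h}) - \nabla U(\hat x_t)}^2 \, dt.
\end{align*}
Using $L$-smoothness together with $\hat x_t - \hat x_{kh} = \int_{kh}^t \hat v_s\,ds$ and Cauchy--Schwarz, this is bounded by $\frac{L^2 h}{4\gamma}\,\E\int_0^{Nh}\int_{kh}^t\norm{\hat v_s}^2 \, ds\,dt$. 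A Gr\"onwall-type bound on the second moment of $\hat v_s$ along ULMC (exploiting $\gamma \asymp \sqrt L$) gives $\E\norm{\hat v_s}^2 \lesssim d$, so the path-space KL is of order $L^{3/2}Nh^3 d$. Choosing $T = Nh \asymp \sqrt{L}/m \cdot \log(d/\epsilon)$ drives the continuous-time error below $\epsilon^2/2$, and matching $L^{3/2}Nh^3 d \lesssim \epsilon^2$ then forces $h = \widetilde\Theta(\epsilon m^{1/2}/(L d^{1/2}))$ and $N = \widetilde\Theta(\kappa^{3/2} d^{1/2}/\epsilon)$.

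The central obstacle is the combination step: KL obeys no triangle inequality, so the two bounds on $\msf{KL}(\hat\mu_{Nh} \mmid \mu_{Nh}^{\text{ULD}})$ and $\msf{KL}(\mu_{Nh}^{\text{ULD}} \mmid \mu)$ cannot simply be added. I would handle this through a weak triangle inequality for R\'enyi divergences of a suitable order, specialized back to KL at the end, with the new LSI along the ULD trajectory (Proposition~\ref{lem:lsi_ct}) providing the uniform Fisher-information control needed to keep the resulting multiplicative constants of order $\widetilde{\mc O}(1)$. A secondary technical hurdle is to verify that the moment bound $\E\norm{\hat v_t}^2 \lesssim d$ persists uniformly along the whole discretized trajectory; this I would establish by a careful recursive one-step argument for the ULMC momentum, again exploiting the friction $\gamma \asymp \sqrt L$.
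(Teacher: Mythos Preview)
Your ingredients and arithmetic are correct, but the combination step---which you yourself flag as the central obstacle---contains a genuine gap, and your proposed fix does not work as stated.

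First, a R\'enyi weak triangle inequality cannot close the argument in the way you suggest. Applying it with $\hat\mu_T, \mu_T, \mu$ produces a term $\eu R_{q''}(\mu_T \mmid \mu)$ with $q''>1$ on the right-hand side, yet the Lyapunov decay of Lemma~\ref{lem:lyapunov_decay} only controls $\msf{KL}$ and Fisher information, not any R\'enyi divergence of order strictly above $1$; there is no clean way to force $q''=1$ (the H\"older exponents do not allow it). The paper instead uses an exact identity followed by Cauchy--Schwarz:
\begin{align*}
\msf{KL}(\hat\mu_T \mmid \mu)
= \msf{KL}(\hat\mu_T \mmid \mu_T) + \msf{KL}(\mu_T \mmid \mu) + \int \log\frac{\mu_T}{\mu}\,\D(\hat\mu_T-\mu_T)
\le \msf{KL}(\hat\mu_T \mmid \mu_T) + \msf{KL}(\mu_T \mmid \mu) + \sqrt{\chi^2(\hat\mu_T \mmid \mu_T)\,\msf{var}_{\mu_T}\Bigl(\log\frac{\mu_T}{\mu}\Bigr)}\,.
\end{align*}
This is where Proposition~\ref{lem:lsi_ct} actually enters: the LSI for $\mu_T$ implies a Poincar\'e inequality for $\mu_T$, so $\msf{var}_{\mu_T}(\log(\mu_T/\mu)) \lesssim m^{-1}\,\msf{FI}(\mu_T \mmid \mu)$, and the Fisher information decays as part of $\eu F$ via Lemma~\ref{lem:lyapunov_decay}. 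So the role of the trajectory LSI is to convert a variance into a Fisher information, not to tame constants in a R\'enyi triangle inequality.

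Second, this decomposition forces the discretization to be controlled in $\chi^2$ (equivalently $\eu R_2$), not merely in $\msf{KL}$. Your Girsanov argument bounds only the path-space $\msf{KL}$ via $\E\norm{\hat v_s}^2 \lesssim d$; that second-moment estimate does \emph{not} yield a $\chi^2$ bound, which requires exponential moments of the drift discrepancy. The paper obtains this through Proposition~\ref{prop:main_disc_bd}, whose proof conditions on high-probability events for $\sup_t\norm{x_t - x_{\lfloor t/h\rfloor h}}$, establishes sub-Gaussian tails for the iterates via an auxiliary modified potential, and only then unconditions. This is substantially more work than the expectation argument you outline, and it is not optional: without a $\chi^2$ discretization bound, the cross term in the Cauchy--Schwarz decomposition is uncontrolled.
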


Here, we justify the choice of error tolerance for $\msf{KL}$ to be $\epsilon^2$. 
Based on Pinsker's and Talagrand's transport inequalities, we know $\msf{KL}$ is on the order of $\msf{TV}^2, \mc{W}_2^2$. 
Hence, this allows for a fair comparison of convergence guarantees in terms of $\msf{KL}$ with $\msf{TV}$ and $\mc{W}_2$. 
Weakening the strong convexity assumption to $\eqref{eq:LSI}$, we obtain a result in $\msf{TV}$.

\begin{theorem}[{Convergence in $\msf{TV}$ under \eqref{eq:LSI}}]\label{thm:tv_lsi}
Let the potential be $L$-smooth, satisfy \eqref{eq:LSI} with constant $C_{\msf{LSI}}$, and satisfy Assumption \ref{as:misc}.
Then, for 
\begin{align*}
h = \widetilde{\Theta}\Bigl(\frac{\epsilon}{C_{\msf{LSI}}^{1/2} L d^{1/2}}\Bigr),\quad\text{and}\quad\gamma \asymp \sqrt{L},
\end{align*}
the following holds for $\hat \mu_{Nh}$, the law of the $N$-th iterate of ULMC initialized at a centered Gaussian (with variance specified in Appendix~\ref{scn:proofs}):
    \begin{align*}
        \norm{\hat \mu_{Nh} - \mu}_{\msf{TV}} \leq \epsilon \qquad\text{after} \qquad N = \widetilde{\Theta} \Bigl(\frac{C_{\msf{LSI}}^{3/2}\, L^{3/2}\,d^{1/2}}{\epsilon} \Bigr) \quad \text{iterations}\,.
    \end{align*}
    
\end{theorem}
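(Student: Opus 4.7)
The plan is to convert the $\msf{TV}$ bound into a $\msf{KL}$ statement via Pinsker's inequality, and then decompose the error into a continuous-time contribution and a discretization contribution using the triangle inequality for $\msf{TV}$. Specifically, let $\tilde\mu_t$ denote the law at time $t$ of ULD initialized from $\mu_0$, so that
\[
\norm{\hat\mu_{Nh} - \mu}_{\msf{TV}} \;\leq\; \norm{\hat\mu_{Nh} - \tilde\mu_{Nh}}_{\msf{TV}} + \norm{\tilde\mu_{Nh} - \mu}_{\msf{TV}}\,.
\]
For the continuous-time term, Lemma~\ref{lem:lyapunov_decay} gives $\eu F(\tilde\mu_T \mmid \mu) \leq \exp\bigl(-T/(10 C_{\msf{LSI}}\sqrt{2L})\bigr)\, \eu F(\mu_0 \mmid \mu)$, and since $\msf{KL} \leq \eu F$ trivially, Pinsker yields $\norm{\tilde\mu_T - \mu}_{\msf{TV}} \lesssim \epsilon$ provided $T = \widetilde\Theta(C_{\msf{LSI}}\sqrt{L})$, with the logarithmic factor absorbed into $\widetilde\Theta$ because Assumption~\ref{as:misc} ensures $\eu F(\mu_0 \mmid \mu) = \widetilde{\mc O}(d)$ for the prescribed Gaussian initialization.

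For the discretization term, Pinsker again gives $\norm{\hat\mu_T - \tilde\mu_T}_{\msf{TV}} \leq \sqrt{\msf{KL}(\hat\mu_T \mmid \tilde\mu_T)/2}$. By the data-processing inequality (projecting path laws on $[0,T]$ to their time-$T$ marginals), this KL is controlled by $\msf{KL}(\hat{\mbb P} \mmid \mbb P)$, where $\hat{\mbb P}, \mbb P$ are the path measures on $[0,T]$ of ULMC and ULD respectively, both started from $\mu_0$. Girsanov's theorem applied to the $v$-coordinate (whose diffusion coefficient is $\sqrt{2\gamma}\, I$ and whose drifts differ only by $\nabla U(x_t) - \nabla U(x_{\lfloor t/h\rfloor h})$) gives
\[
\msf{KL}(\hat{\mbb P} \mmid \mbb P) \;=\; \frac{1}{4\gamma}\, \E_{\hat{\mbb P}}\Bigl[\int_0^T \norm{\nabla U(x_t) - \nabla U(x_{\lfloor t/h\rfloor h})}^2 \dt\Bigr] \;\leq\; \frac{L^2}{4\gamma} \sum_{k=0}^{N-1} \int_{kh}^{(k+1)h} \E\norm{x_t - x_{kh}}^2 \dt\,,
\]
using $L$-smoothness for the last inequality. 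From the explicit integrated form of ULMC on $[kh,(k+1)h]$ (Appendix~\ref{sec:explicit_form}), one obtains $\E\norm{x_t - x_{kh}}^2 \lesssim h^2\, \E\norm{v_{kh}}^2 + h^4\, \E\norm{\nabla U(x_{kh})}^2 + \gamma h^3 d$. Plugging in $\gamma \asymp \sqrt{L}$, $T \asymp C_{\msf{LSI}}\sqrt{L}$, and the stated step size $h = \widetilde\Theta(\epsilon/(C_{\msf{LSI}}^{1/2} L d^{1/2}))$, the dominant term $L^2 T h^2 d /\gamma$ reduces to $\widetilde{\mc O}(\epsilon^2)$, so Pinsker delivers $\norm{\hat\mu_T - \tilde\mu_T}_{\msf{TV}} \lesssim \epsilon$; the iteration count $N = T/h = \widetilde\Theta(C_{\msf{LSI}}^{3/2} L^{3/2} d^{1/2}/\epsilon)$ then matches the theorem.

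The main obstacle is obtaining uniform-in-time second-moment bounds $\E\norm{v_{kh}}^2 = \widetilde{\mc O}(d)$ and $\E\norm{\nabla U(x_{kh})}^2 = \widetilde{\mc O}(Ld)$ along the ULMC trajectory, which are needed to close the one-step displacement estimate. I would establish these via a discrete-time Lyapunov argument on a quadratic energy such as $\E[\tfrac{1}{2}\norm{v_{kh}}^2 + U(x_{kh})]$, balancing the dissipation produced by the friction $\gamma v$ against the energy injected by the Brownian term, and using $L$-smoothness together with Assumption~\ref{as:misc} (or a comparison with $\pi$) to bound $\E\norm{\nabla U(x_{kh})}^2$. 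Novikov's condition required to apply Girsanov's theorem follows from the same moment estimates combined with $L$-smoothness of $\nabla U$.
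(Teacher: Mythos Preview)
Your high-level structure --- the $\msf{TV}$ triangle inequality, Pinsker on each piece, continuous-time decay via Lemma~\ref{lem:lyapunov_decay}, and Girsanov for the discretization --- is exactly the paper's proof. The divergence is in how the discretization term is handled: the paper simply invokes Proposition~\ref{prop:main_disc_bd}, whose proof evaluates the Girsanov quantity under the \emph{continuous-time} path measure and controls $\norm{x_{kh}}$, $\norm{v_{kh}}$ by high-probability tail bounds obtained through a change of measure to an auxiliary confining potential $U^{(a)}(x)=U(x)+\tfrac{\beta}{2}\,(\norm{x}-S)_+^2$ (Appendix~\ref{scn:subgaussianty}). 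You instead compute the $\msf{KL}$ Girsanov integral directly under the ULMC measure, and therefore need second moments of the ULMC iterates.

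This is where your argument has a genuine gap. The energy Lyapunov $\tfrac12\norm{v}^2 + U(x)$ does yield a time-averaged bound $\tfrac{1}{N}\sum_k \E\norm{v_{kh}}^2 \lesssim d$ (enough for your dominant $h^2$ term), but under LSI alone it gives no control on $\E\norm{x_{kh}}^2$ or $\E\norm{\nabla U(x_{kh})}^2$: there is no coercivity in $x$ and no dissipativity to close a recursion for the position. Your aside about ``a comparison with $\pi$'' is essentially what the paper supplies via $U^{(a)}$ --- some confinement or change-of-measure device is needed, and the energy argument by itself does not provide it. A cleaner fix within your framework is to reverse the $\msf{KL}$: since $\msf{TV}$ is symmetric, Pinsker equally bounds $\norm{\hat\mu_T-\tilde\mu_T}_{\msf{TV}}$ by $\sqrt{\msf{KL}(\tilde\mu_T\mmid\hat\mu_T)/2}$, and the Girsanov expectation is then under the continuous-time law, where the paper's tail estimates (Proposition~\ref{prop:high_prob_diffusion}) apply directly.
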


\subsection{Convergence in \texorpdfstring{$\eu R_q$}{R\'enyi} and Improving the Dependence on \texorpdfstring{$\kappa$}{the Condition Number}}
To state our convergence results in $\eu R_q$, we additionally inherit the following technical assumption from \cite{caoluwang2020underdamped}.
\begin{assumption}\label{as:l2_asmpt}
$\mc H^1(\mu) \hookrightarrow L^2(\mu)$ is a compact embedding.
Secondly, assume that $U$ is twice continuously differentiable, and that for all $x \in \R^d$, we have
\begin{align*}
    \norm{\nabla^2 U(x)} \leq \mf L\, \left(1+\norm{\nabla U(x)} \right)\,.
\end{align*}
\end{assumption}
\begin{remark*}
\vspace{-0.2em}
    {\cite[Theorem 3.1]{hooton1981compact} shows the first part of this assumption is always satisfied if the potential has super-linear tail growth, i.e. $U(x) \propto \|x\|^\alpha$ for $\alpha>1$ and large $\|x\|$. 
    In the case where the tail is strictly linear, we can instead construct an arbitrarily close approximation with super-linear tails; thus, it generically holds for all targets we consider in this work. As also remarked in \cite{caoluwang2020underdamped}, the above assumption is required solely due to technical reasons and is likely not a necessary condition.
    }
    
    {
    The second part of the assumption is satisfied under $L$-smoothness of the gradient with the same constant. In the convex case or the case where $\nabla^2 U$ is lower bounded, the constant $\mf L$ does not show up in the bounds. As a result, for weakly smooth potentials in this setting, we can approximate using twice differentiable potentials to obtain a rate estimate.
    }
\end{remark*}

In the light of the above discussion, we emphasize that this additional assumption largely does not hinder the applicability of our results. 
Under this assumption, \cite{caoluwang2020underdamped} established the following guarantee on \eqref{eq:ULD} in continuous time. 

\begin{lemma}[{Rapid Convergence in $L^2$; Adapted from \cite[Theorem 1]{caoluwang2020underdamped}}]\label{lem:cont_time_pi} 
Under Assumption~\ref{as:l2_asmpt}, and if $\pi$ additionally satisfies \eqref{eq:pi} with constant $C_{\msf{PI}}$, then the following holds for the law $\mu_t$ of ULD initialized at $\mu_0$, where $C_0 > 0$ is an absolute constant:
\begin{align*}
    \chi_2(\mu_t \mmid \mu) \leq C_0 \exp\bigl(-\mf q(\gamma) \, t\bigr)\, \chi_2(\mu_0 \mmid \mu)\,,
\end{align*}
where the coefficient inside the exponent is 
\begin{align}\label{eq:poincare_decay}
    \mf q(\gamma) \deq \frac{C_{\msf{PI}}^{-1} \gamma}{C_0\, (C_{\msf{PI}}^{-1} +R^2 + \gamma^2)},
\end{align}
and the constant $R$ is 
\begin{align*}
    R = \begin{cases}
    0 & \text{if $U$ convex}\,, \\
    \sqrt{K} & \text{if}~\inf_{x \in \R^d} \nabla^2 U(x) \succeq -K I_d\,,\\
    \mf L \sqrt{d} & \text{if}~\norm{\nabla^2 U(x)}_{\rm op} \le \mf L\,(1+\norm{\nabla U(x)})~\text{for all}~x\in\R^d \,.
    \end{cases}
\end{align*}
\end{lemma}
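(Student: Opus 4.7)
The plan is to follow the variational hypocoercivity approach of \cite{caoluwang2020underdamped}, which exploits an $\mc H^{-1}$-type dual norm in order to activate the Poincar\'e inequality. Set $g_t \deq d\mu_t/d\mu - 1$, so that $\chi_2(\mu_t \mmid \mu) = \|g_t\|_{L^2(\mu)}^2$ and $\int g_t \, d\mu = 0$. The evolution $\partial_t g_t = \mc L^\dagger g_t$ is driven by the $L^2(\mu)$-adjoint generator, which decomposes as $\mc L^\dagger = \mc L_{\rm S} - \mc L_{\rm A}$ with a symmetric part $\mc L_{\rm S} g = \gamma\,(\Delta_v g - v\cdot\nabla_v g)$ and an antisymmetric transport part $\mc L_{\rm A} g = v\cdot\nabla_x g - \nabla U\cdot\nabla_v g$. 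The naive $L^2$ energy identity
\begin{equation*}
-\tfrac{1}{2}\,\partial_t \|g_t\|_{L^2(\mu)}^2 = \gamma\,\|\nabla_v g_t\|_{L^2(\mu)}^2
\end{equation*}
is degenerate in the position variable, which is the familiar hypocoercive obstruction: one cannot close against $\|g_t\|_{L^2(\mu)}^2$ using only a Poincar\'e inequality for $\mu$.

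The remedy is to introduce a twisted Lyapunov functional of the schematic form
\begin{equation*}
\Phi(g) \deq \|g\|_{L^2(\mu)}^2 + a(\gamma)\,\|\nabla_v g\|_{L^2(\mu)}^2 + b(\gamma)\,\langle \nabla_v g,\, \nabla_x g\rangle_{L^2(\mu)} + c(\gamma)\,\|\nabla_x g\|_{\dot{\mc H}^{-1}(\mu)}^2,
\end{equation*}
with coefficients $a,b,c > 0$ to be tuned in $\gamma$, $C_{\msf{PI}}$, and $R$. The inclusion of the dual-norm term is the crucial new ingredient of \cite{caoluwang2020underdamped}: under \eqref{eq:pi} for $\pi$ combined with the Gaussian velocity marginal, one has $\|\varphi\|_{\dot{\mc H}^{-1}(\mu)}^2 \lesssim (C_{\msf{PI}}+1)\,\|\varphi\|_{L^2(\mu)}^2$ for mean-zero $\varphi$, so that $\Phi(g)$ stays equivalent to $\|g\|_{L^2(\mu)}^2$. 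Computing $\partial_t \Phi(g_t)$ through integration by parts and the commutator identity $[\nabla_x, \mc L_{\rm A}] = -(\nabla^2 U)\,\nabla_v$, the cross term activates a full-gradient dissipation $\|\nabla_{x,v}\, g_t\|_{L^2(\mu)}^2$, which then closes against $\Phi(g_t)$ itself through \eqref{eq:pi} applied to $\mu$, yielding $\partial_t \Phi(g_t) \leq -2\mf q(\gamma)\,\Phi(g_t)$.

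The main obstacle, and the source of the precise rate $\mf q(\gamma) \asymp C_{\msf{PI}}^{-1}\,\gamma/(C_{\msf{PI}}^{-1} + R^2 + \gamma^2)$, is control of the Hessian-commutator term $\langle (\nabla^2 U)\,\nabla_v g, \nabla_x g\rangle_{L^2(\mu)}$. When $U$ is convex, $\nabla^2 U \succeq 0$ makes the contribution sign-definite in the right direction and we take $R = 0$; under $\nabla^2 U \succeq -K I_d$, we absorb the deficit at cost $R = \sqrt K$; in the general case one uses the crude bound $\|\nabla^2 U\| \leq \mf L\,(1+\|\nabla U\|)$ together with $\mathbb{E}_\mu[\|\nabla U\|^2]^{1/2} \lesssim \sqrt d$ to obtain $R = \mf L\sqrt d$. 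The compact embedding $\mc H^1(\mu)\hookrightarrow L^2(\mu)$ from Assumption~\ref{as:l2_asmpt} guarantees that the relevant operator has discrete spectrum, so the differential inequality genuinely closes and is not merely formal. Since the tuning of $a,b,c$ makes $\Phi(g) \asymp \|g\|_{L^2(\mu)}^2$ with multiplicative constants absorbed into the universal $C_0$, Gr\"onwall's lemma then delivers the claimed exponential decay of $\chi_2(\mu_t \mmid \mu)$.
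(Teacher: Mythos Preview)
The paper does not supply its own proof of this lemma: it is stated as ``Adapted from \cite[Theorem 1]{caoluwang2020underdamped}'' and used thereafter as a black box to feed the discretization analysis, so there is nothing in the paper to compare your argument against.

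On your sketch itself: the broad strategy---hypocoercivity that leverages an $\mc H^{-1}$-type structure to activate \eqref{eq:pi}, with the three regimes for $R$ arising from how one controls the Hessian commutator $\langle(\nabla^2 U)\,\nabla_v g,\nabla_x g\rangle$---is exactly the theme of \cite{caoluwang2020underdamped}, and your discussion of the casework and the role of Assumption~\ref{as:l2_asmpt} is accurate. However, the specific Lyapunov functional you write down (a Villani-style modified energy mixing $\|\nabla_v g\|_{L^2}^2$, a first-order cross term, and $\|\nabla_x g\|_{\dot{\mc H}^{-1}}^2$) is not the functional Cao--Lu--Wang actually employ. Their argument, following the Dolbeault--Mouhot--Schmeiser and Albritton--Armstrong--Mourrat--Novack frameworks, stays at the $L^2$ level and perturbs by a cross term built from the velocity-average projection $\Pi$ and an auxiliary operator solving an elliptic problem in $x$: schematically $\Phi(g)=\|g\|^2+\varepsilon\,\langle Ag,g\rangle$ with $A$ involving $(T\Pi)^*\bigl(1+(T\Pi)^*(T\Pi)\bigr)^{-1}$. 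Your hybrid functional mixes $\mc H^1$- and $\mc H^{-1}$-scale quantities, and the assertion that its time derivative produces a clean $\|\nabla_{x,v} g\|_{L^2}^2$ dissipation is not immediate---the commutator calculation interacts with the $\dot{\mc H}^{-1}$ norm in a way that would need separate justification. The sharp rate $\mf q(\gamma)$ in \cite{caoluwang2020underdamped} is moreover extracted via a space-time Poincar\'e inequality rather than by direct tuning of three scalar coefficients.
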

\begin{remark*}
{
    In the strongly log-concave case, Lemma \ref{lem:cont_time_pi} actually yields a better decay of order $\sqrt{m}$ than Lemma \ref{lem:lyapunov_decay}, which has dependence $m/\sqrt{L}$. 
}
\end{remark*}

Our final result leverages the above accelerated convergence guarantees of ULD, and establishes the first bound for ULMC in R\'enyi divergence with an improved condition number dependence.

\begin{theorem}[{Convergence in $\eu{R}_q$ under \eqref{eq:pi}}]\label{thm:chip_pi}
    Let the potential be $(L,s)$-weakly smooth, satisfy \eqref{eq:pi} with constant $C_{\msf{PI}}$, and satisfy Assumption \ref{as:misc}.
    Let it also satisfy the additional technical condition Assumption \ref{as:l2_asmpt}. 
    Then, for $\xi \in (0, 1)$
    $$h= \tilde{\Theta} \Bigl(\frac{\gamma^{1/(2s)} \epsilon^{1/s} \xi^{1/s} \mf q(\gamma)^{1/(2s)}}{L^{1/s} d^{1/2} \,{(L \vee d)}^{1/(2s)}} \Bigr),$$ 
    the following holds for $\hat \mu_{Nh}$, the law of the $N$-th iterate of ULMC initialized at a centered Gaussian (variance specified in Appendix~\ref{scn:proofs}) for $q = 2 - \xi \in [1, 2)$ and 
    with $\mf q$ defined in \eqref{eq:poincare_decay}:
    \begin{align*}
        \eu R_q(\hat \mu_{Nh} \mmid \mu) \leq \epsilon^2 \qquad 
        \text{after} \qquad N 
        = \tilde{\Theta} \Bigl(\frac{L^{1/s}\, d^{1/2} \, {(L\vee d)}^{1+1/(2s)}}{\gamma^{1/(2s)}\, \epsilon^{1/s} \, \xi^{1/s} \,{\mf q(\gamma)}^{1+1/(2s)}} \Bigr)\quad \text{iterations}\,.
    \end{align*} 
\end{theorem}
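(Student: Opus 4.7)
My approach combines the accelerated continuous-time convergence from Lemma~\ref{lem:cont_time_pi} with a Girsanov-based discretization bound in R\'enyi divergence, following the template introduced for LMC in \cite{chewi2021analysis}.

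\textbf{Step 1 (Weak triangle decomposition).} Since $q = 2 - \xi < 2$, I would first invoke a weak triangle inequality for R\'enyi divergence to obtain orders $q_1, q_2 \in (q, 2)$ and constants $c_1, c_2$ depending on $\xi$ such that
\[
\eu R_q(\hat\mu_{Nh} \mmid \mu) \le c_1 \, \eu R_{q_1}(\mu_{Nh} \mmid \mu) + c_2 \, \eu R_{q_2}(\hat\mu_{Nh} \mmid \mu_{Nh}),
\]
where $\mu_t$ denotes the law of ULD initialized at $\mu_0$. This splits the error into a continuous-time bias and a discretization term, each at an order strictly below $2$ so that the $\chi_2 = \eu R_2$ bound of Lemma~\ref{lem:cont_time_pi} can be applied to the first summand by monotonicity of R\'enyi in its order.

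\textbf{Step 2 (Continuous-time bias).} Lemma~\ref{lem:cont_time_pi} gives $\chi_2(\mu_T \mmid \mu) \lesssim \exp(-\mf q(\gamma)\, T)\, \chi_2(\mu_0 \mmid \mu)$, hence $\eu R_{q_1}(\mu_T \mmid \mu) \le \log(1 + \chi_2(\mu_T \mmid \mu))$ inherits the same exponential rate. Choosing $T = Nh = \widetilde\Theta(\mf q(\gamma)^{-1})$, with the logarithmic factors absorbing the initial divergence (controlled via Assumption~\ref{as:misc} and the Gaussian initialization), forces this contribution to be $\le \epsilon^2/2$.

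\textbf{Step 3 (Girsanov discretization).} The velocity SDEs of ULD and ULMC share the same diffusion $\sqrt{2\gamma}\, \D B_t$ and differ only in drift by $\nabla U(\hat x_t) - \nabla U(\hat x_{\lfloor t/h\rfloor h})$. Applying Girsanov's theorem at the level of path measures, and combining it with the change-of-measure / exponential-moment technique of \cite{chewi2021analysis} to upgrade from a path-space KL bound to a terminal R\'enyi bound at order $q_2$, I expect an estimate of the schematic form
\[
\eu R_{q_2}(\hat\mu_T \mmid \mu_T) \;\lesssim\; \frac{c(\xi)}{\gamma} \int_0^T \mathbb E_{\widetilde{\mathbb P}}\bigl[\|\nabla U(\hat x_t) - \nabla U(\hat x_{\lfloor t/h\rfloor h})\|^2\bigr]\, \D t,
\]
under a tilted measure $\widetilde{\mathbb P}$ that still admits polynomial moments. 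Applying $(L,s)$-weak smoothness bounds the integrand by $L^2\, \mathbb E_{\widetilde{\mathbb P}}[\|\hat x_t - \hat x_{kh}\|^{2s}]$, and since $\hat x_t - \hat x_{kh} = \int_{kh}^t \hat v_r\, \D r$, uniform-in-time moment bounds on $\|\hat v_r\|$ (extracted from the dissipative structure of the velocity SDE together with the Gaussian initialization of the momentum) yield an overall estimate of order $L^2\, T\, h^{2s} \cdot \operatorname{poly}(L, d)$.

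\textbf{Step 4 (Optimize and conclude).} Imposing the discretization term to be $\le \epsilon^2/2$ pins down $h$ via the relation $h^{2s} \asymp \gamma\, \xi\, \epsilon^2 / (L^2\, T \cdot \operatorname{poly}(L,d))$, and then $N = T/h$ with $T \asymp \mf q(\gamma)^{-1}$ (up to logarithmic factors) reproduces the stated complexity. The exponents $1/s$ and $1/(2s)$ in the final rates are the algebraic consequence of inverting the $h^{2s}$ equation.

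The main obstacle is Step 3: passing from a path-space KL bound via Girsanov to a terminal R\'enyi bound requires delicate exponential-moment control of the Girsanov density, and the weak smoothness exponent $s\le 1$ together with only a Poincar\'e inequality (no strong log-concavity) makes trajectory moment bounds nontrivial. The resolution requires leveraging the hypocoercive/dissipative structure of ULD (through the friction $-\gamma v_t$ term) combined with the tightness of the initialization from Assumption~\ref{as:misc} to obtain polynomial-in-$d$ moment bounds on $\|\hat v_r\|$ uniformly along the ULMC iteration, all while keeping the tilted measure $\widetilde{\mathbb P}$ integrable.
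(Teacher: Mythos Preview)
Your high-level template (weak triangle $+$ continuous-time $\chi_2$ decay $+$ Girsanov discretization) matches the paper, but Step~1 contains a genuine error that propagates. You cannot arrange the weak triangle inequality so that \emph{both} orders $q_1,q_2$ lie in $(q,2)$: parametrizing the H\"older pair as $\mf c=1+a$, the two R\'enyi orders become $(1+a)q$ and $q+(q-1)/a$, and for $q=2-\xi$ with $\xi$ small the constraints $(1+a)q<2$ and $q+(q-1)/a\le 2$ are mutually exclusive. The paper instead fixes the continuous-time term at order exactly $2$ (so Lemma~\ref{lem:cont_time_pi} applies), which forces the discretization term to live at order $\mf c q\asymp 2/\xi$. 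This is not cosmetic: the $\xi^{-1/s}$ in the final iteration complexity comes precisely from invoking the discretization bound (Proposition~\ref{prop:main_disc_bd}) at R\'enyi order $q\asymp 2/\xi$, via the step-size condition $h\lesssim q^{-1/s}$ there. Your decomposition would not produce that dependence.

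Step~3 is also under-specified in a way that hides the main technical difficulty. The R\'enyi bound via Girsanov does not reduce to an expectation under a tilted measure with polynomial moments; it requires bounding $\log\E\exp\bigl(\frac{Cq^2}{\gamma}\int_0^T\|\nabla U(x_t)-\nabla U(x_{\lfloor t/h\rfloor h})\|^2\,\D t\bigr)$, i.e., an \emph{exponential} moment. Polynomial bounds on $\|\hat v_r\|$ from dissipativity are insufficient here. The paper's route is: (i) condition on a high-probability event where $\max_k\|x_{kh}\|,\max_k\|v_{kh}\|$ are bounded; (ii) inside this event, use a one-step movement bound for $\|x_t-x_{kh}\|$ (crucially $O(h)$ in position, not $O(\sqrt h)$) to control the exponential; (iii) remove the conditioning via Lemma~\ref{lem:uncondition}. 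Step~(i) is delicate because under only~\eqref{eq:pi} the target is not sub-Gaussian, so the paper introduces the modified potential $U^{(a)}=U+\frac{\beta}{2}(\|x\|-S)_+^2$ and transfers sub-Gaussianity back via a second Girsanov change-of-measure. Your proposed ``dissipative structure $+$ velocity moments'' does not supply this mechanism.
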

\begin{remark*}
{
    The optimal choice is to take $\gamma \asymp \sqrt{C_{\msf{PI}}^{-1} + R^2}$.
    If the potential $U$ is convex, then we set $\gamma \asymp \mf q(1/\sqrt{C_{\msf{PI}}}) \asymp 1/\sqrt{C_{\msf{PI}}}$, which is known to be an optimal choice \cite{caoluwang2020underdamped}.
    As a result, in the convex and smooth case, the iteration complexity  has the condition number dependence $\kappa$, which improves upon the $\kappa^2$ dependence seen in \cite{chewi2021analysis}.
    The dependence on dimension $d$ and error tolerance $\epsilon$ are also improved. 
}
\end{remark*}

\section{Examples}\label{sec:applications}

\begin{example} {
    We consider the potential $U(x) = \sqrt{1+\norm{x}^2}$, which satisfies \eqref{eq:pi} with constant $\mc{O}(d)$~\cite{bobkov2003sphericallysymmetric} and is $(1, 1)$-smooth.
    Assuming the compact embedding condition of Assumption \ref{as:l2_asmpt}, Theorem \ref{thm:chip_pi} gives a complexity of $\widetilde{\mc{O}}(d^3 \xi^{-1}\epsilon^{-1})$ for $\epsilon^2$-guarantees in $\eu{R}_{2-\xi}$ after optimizing for $\gamma$, since in this case the potential is log-concave. In this case, the dimension dependence equates to that of the proximal sampler with rejection sampling \cite[Corollary 8]{chen2022improved}, which is $\widetilde{\mc{O}}(d^3)$; 
    it surpasses~\cite[Theorem 8]{chewi2021analysis}, which can only obtain $\widetilde{\mc{O}}(d^4\epsilon^{-2})$ for the same guarantees. However, it is important to note that the latter two works obtain these for any order of R\'enyi divergence and are not limited to order $q = 2- \xi < 2$, which cannot presently be obtained using our results for ULMC\@.}
\end{example}

\begin{example}\label{ex:slc}{
    Consider an $m$-strongly log-concave and $L$-log-smooth distribution. Non-trivial examples of this can be found in Bayesian regression (see e.g.,~\cite[Section 6]{dalalyan2017theoretical}); we will examine the first one, where $\pi(x) \propto \exp(-\norm{x-\bs a}^2/2) + \exp(-\norm{x+\bs a}^2/2)$ for some $\bs a \in \R^d: \norm{\bs a} =1/3$. 
    Here, our Theorem \ref{thm:kl_slc} gives a complexity of $N = \widetilde{\mc{O}}(d^{1/2}\epsilon^{-1})$ 
    to obtain a $\epsilon^2$-guarantee for the $\msf{KL}$ divergence. In contrast, the Hessian is $\nabla^2 U(x) = I_d - 4 \bs a \bs a^\top \exp(2x^\T \bs a)/(1+\exp(2x^\T\bs a))^2$, which has $L_{\msf H} \asymp d$, where $L_{\msf H}$ is the Lipschitz constant of the Hessian in the Frobenius norm. 
    Consequently,~\cite[Theorem 1]{maetal2021nesterovmcmc} is stated as $N = \widetilde{\mc{O}}(d^{1/2} L_H m^{-2} \epsilon^{-1})$, which in this case gives $N = \widetilde{\mc{O}}(d^{3/2} \epsilon^{-1})$ 
    to obtain the same $\epsilon^2$-accuracy guarantee.
    This is worse in the dimension-dependence. 
    Finally, it is possible to compare with the discretization bounds achieved in~\cite[Theorem 28]{ganeshtalwar2020renyi}, where in combination with our continuous time results (using the same proof technique as Theorem \ref{thm:kl_slc}) to yield 
    $N = \widetilde{O}(d^{1/2}\epsilon^{-2})$ iterations, which is suboptimal in the order of $\epsilon$, but has the same dimension dependence.}
\end{example}

\begin{example}{
    We can analyze $L$-smooth distributions satisfying a log-Sobolev inequality with parameter $C_{\msf{LSI}}$. 
    One such instance arises when considering any bounded perturbation of a strongly convex potential. 
    In this case, let $U_{ \bs a}$ be the potential of the target in Example \ref{ex:slc}. Then consider a target with modified potential $U_{\bs a} + f$, with $\sup_{x}\abs{f(x)} \vee \norm{\nabla f(x)} \vee \norm{\nabla^2 f(x)}_{\msf{op}} \leq \mf B$ for some $\mf B < \infty$, and let $\nabla^2 f$ be $\mc{O}(d)$-Frobenius Lipschitz. We can bound the log-Sobolev constant of this potential using the Holley--Stroock Lemma \cite{holley1986logarithmic}. Let this new potential have condition number $\kappa$. We achieve $\epsilon$-accuracy in $\msf{TV}$ distance with  $N = \widetilde{\mc{O}}(\kappa^{3/2} d^{1/2} \epsilon^{-1})$. For comparison, the previous bound  \cite[Theorem 1]{maetal2021nesterovmcmc} gives $N = \tilde{\mc{O}}(\kappa^2 d^{3/2}\epsilon^{-1})$ to arrive at the same guarantee in $\msf{TV}$, which is worse in the dimension.
    However, note that the guarantees in \cite[Theorem 1]{maetal2021nesterovmcmc} are in $\msf{KL}$, which is stronger than $\msf{TV}$. 
    Finally, we note that \cite{ganeshtalwar2020renyi} requires strong log-concavity, and hence cannot provide a guarantee in this setting.}
\end{example}

\begin{example}
{
    Consider a $(1,s)$-weakly log-smooth target that is log-concave and satisfies a Poincar\'e inequality with $C_{\msf{PI}} = \mc{O}(d).$ Consequently, Theorem \ref{thm:chip_pi} yields $N = \tilde{\mc{O}}(d^{2+1/s} \xi^{-1/s} \epsilon^{-1/s})$ to obtain $\epsilon^2$-guarantees for $\eu R_{2-\xi}$. \cite[Theorem 7]{chewi2021analysis} yields $N=\tilde{\mc{O}}(d^{3+2/s} \epsilon^{-2/s})$ for the same guarantees, which is worse in both parameters. On the other hand, take the specific case of a distribution with potential $U(x) = \norm{x}^\alpha$, which has $C_{\msf{PI}} = \mc{O}(d^{2/\alpha - 1}) $ \cite{bobkov2003sphericallysymmetric}, is log-convex and $(1, \alpha-1)$-weakly log-smooth. Consequently, Theorem \ref{thm:chip_pi} yields  
    $N = \tilde{\mc{O}}( d^{\alpha / (\alpha-1)} \xi^{-1/(\alpha-1)} \epsilon^{-1/(\alpha-1)} )$
    for $\epsilon^2$-accuracy guarantees in $\eu R_{2-\xi}$ divergence. 
    This is worse by a factor of $d$ than the rate estimate obtained in \cite[Example 9]{chewi2021analysis}, as they leverage a stronger class of functional inequalities that interpolate between \eqref{eq:pi} and \eqref{eq:LSI}, whereas our analysis cannot capture this improvement. Our convergence guarantee is still better in terms of $\epsilon$-dependence.} 
\end{example}

\section{Proof Sketches}\label{sec:proof_sketch}

\subsection{Continuous Time Results}

For results under both the Poincar\'e and log-Sobolev inequalities, we leverage the existing results as stated in \cite{caoluwang2020underdamped, maetal2021nesterovmcmc}, which we present in Lemmas \ref{lem:lyapunov_decay} and \ref{lem:cont_time_pi}. These allow us to bound $\chi_2(\mu_t \mmid \mu)$, $\msf{KL}(\mu_t \mmid \mu)$ with exponentially decaying quantities.

With the additional assumption of strong convexity, we can obtain a contraction in an alternate system of coordinates $(\phi, \psi) \defeq \mc M(x,v) \defeq (x, x+\frac{2}{\gamma}\, v)$ (see Appendix \ref{sec:continuous_time}). This allows us to consider the distributions of the continuous time iterates and the target in these alternate coordinates $\mu_t^{\mc M}, \mu^{\mc M}$ respectively. From this, we obtain the following proposition. 

\begin{proposition}[{Log-Sobolev Inequality Along the Trajectory}]\label{lem:lsi_ct}
    Suppose $U$ is $m$-strongly convex and $L$-smooth. Let $\mu_t^{\mc M}$ now denote the law of the continuous-time underdamped Langevin diffusion with $\gamma = c\sqrt{L}$ for $c \geq \sqrt{2}$ in the $(\phi,\psi)$ coordinates. Suppose the initial distribution $\mu_0$ has \eqref{eq:LSI} constant (in the altered coordinates) $C_{\msf{LSI}}(\mu_0^{\mc M})$, then $\{\mu_t^{\mc M}\}_{t \geq 0}$ satisfies \eqref{eq:LSI} with constant that can be uniformly upper bounded by        
    \begin{align*}
        C_{\msf{LSI}}(\mu_t^{\mc M})
        &\le \exp\Bigl( - m\sqrt{\frac{2}{L}}\,t\Bigr)\, C_{\msf{LSI}}(\mu_0^{\mc M}) + \frac{2}{m}\,.
    \end{align*}
\end{proposition}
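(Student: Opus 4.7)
The plan is to combine a strong log-concavity estimate for the target in the distorted coordinates with an LSI-propagation argument along the (hypocoercive) semigroup.

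First I would change variables to $(\phi, \psi) = \mc M(x, v) = (x, x + (2/\gamma)\, v)$; a short computation shows that the target in these coordinates is $\mu^{\mc M}(\phi, \psi) \propto \exp\bigl(-U(\phi) - (\gamma^2/8)\,\norm{\psi - \phi}^2\bigr)$. The Hessian of the induced potential $U^{\mc M}$ decouples, after diagonalizing $\nabla^2 U$, into $2\times 2$ blocks whose diagonal entries are $\lambda + \gamma^2/4$ and $\gamma^2/4$ and whose off-diagonal entries are $-\gamma^2/4$, for each eigenvalue $\lambda \in [m, L]$ of $\nabla^2 U$. An explicit eigenvalue computation yields that, whenever $\gamma^2 \geq 2L$ (i.e.\ $c \geq \sqrt{2}$), the smallest eigenvalue of each such block is of order $m$, so that $\mu^{\mc M}$ is strongly log-concave with a curvature bound giving $C_{\msf{LSI}}(\mu^{\mc M}) \leq 2/m$ via Bakry--\'Emery. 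This constant is the ``floor'' in the final bound.

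Next I would propagate LSI along the flow using the following general principle: if a Markov semigroup admits an invariant measure $\pi_\infty$ with $C_{\msf{LSI}}(\pi_\infty) \leq C_\infty$ and contracts a suitable Lyapunov functional at rate $2\beta$, then $C_{\msf{LSI}}(\mu_t) \leq e^{-2\beta t}\, C_{\msf{LSI}}(\mu_0) + C_\infty$. Concretely, the argument would apply the functional $\eu F(\,\cdot \mmid \mu^{\mc M})$ of Lemma~\ref{lem:lyapunov_decay} to the tilted probability $f^2\,\mu_t^{\mc M}/\E_{\mu_t^{\mc M}}[f^2]$ against $\mu_t^{\mc M}$ for an arbitrary smooth test $f$; invoking the exponential contraction of $\eu F$ together with the LSI for $\mu^{\mc M}$ from Step~1, and then optimizing over $f$, extracts a uniform LSI constant for $\mu_t^{\mc M}$. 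With $\gamma = c\sqrt{L}$ and $c \geq \sqrt{2}$, Lemma~\ref{lem:lyapunov_decay} identifies the rate $2\beta = m\sqrt{2/L}$, giving the stated bound after integration.

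The principal obstacle is rigorously carrying out the LSI propagation along the \emph{hypoelliptic} underdamped semigroup: the standard Bakry--\'Emery $\Gamma_2$ calculus fails because the carr\'e du champ only sees the $v$- or $\psi$-derivative, so the naive curvature--dimension inequality does not close. This is exactly why Lemma~\ref{lem:lyapunov_decay} employs the matrix $\mf M$; the distorted Fisher-like quantity $\E_{\mu'}[\norm{\mf M^{1/2}\nabla\log(\mu'/\mu)}^2]$ was engineered so that its quadratic form dominates the full squared gradient up to an absorbable remainder produced by the skew-symmetric part of the drift. Combining this distorted gradient control with the strong log-concavity of $\mu^{\mc M}$ from the first step should yield the differential inequality $\partial_t F_t \leq -m\sqrt{2/L}\,(F_t - 2/m)$ for $F_t \defeq C_{\msf{LSI}}(\mu_t^{\mc M})$, and \Gronwall integration produces the claimed exponential bound.
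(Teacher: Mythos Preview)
Your approach diverges from the paper's, and the continuous-time strategy you sketch has a genuine gap.

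The paper does \emph{not} argue in continuous time. Instead it works with the ULMC \emph{discretization}: in the $(\phi,\psi)$ coordinates one iteration decomposes as a deterministic map $\bar F$ followed by convolution with a Gaussian $\mc N(0,\bar\Sigma)$. A direct derivative computation shows $\norm{\bar F}_{\Lip}\le 1-\frac{m}{\sqrt{2L}}h+O(Lh^2)$, and one checks $\norm{\bar\Sigma}_{\rm op}=O(h/\gamma)$. Then the LSI constant obeys the recursion
\[
C_{\msf{LSI}}(\hat\mu_{(k+1)h}^{\mc M})\le \norm{\bar F}_{\Lip}^2\, C_{\msf{LSI}}(\hat\mu_{kh}^{\mc M}) + C_{\msf{LSI}}\bigl(\mc N(0,\bar\Sigma)\bigr),
\]
using the standard facts that a Lipschitz pushforward scales the LSI constant by the squared Lipschitz constant and that Gaussian convolution adds the covariance. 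Iterating and sending $h\to 0$ with $Nh\to t$ yields the claim. Notice the floor $2/m$ comes out of the balance between contraction rate and noise variance, not from the strong log-concavity of $\mu^{\mc M}$ (your Step~1 is correct but unused here).

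The gap in your proposal is Step~2. Lemma~\ref{lem:lyapunov_decay} asserts decay of $\eu F(\mu_t\mmid\mu)$ where $\mu$ is \emph{stationary} and $\mu_t$ evolves under ULD; it says nothing about $\eu F(\nu\mmid\mu_t^{\mc M})$ for an arbitrary tilt $\nu=f^2\mu_t^{\mc M}/\E_{\mu_t^{\mc M}}[f^2]$, which is not itself the law of ULD at some time. The ``general principle'' you invoke---that exponential contraction of a Lyapunov functional to the invariant measure implies $C_{\msf{LSI}}(\mu_t)\le e^{-2\beta t}C_{\msf{LSI}}(\mu_0)+C_\infty$---is not a standard result and you do not justify it. To make a continuous-time argument work one would need either a two-point contraction (e.g.\ a synchronous-coupling Wasserstein contraction in the twisted norm, which \emph{would} propagate LSI) or a $\Gamma_2$-type inequality for the hypoelliptic generator; you correctly identify that the latter fails, but the former is precisely what the paper's discrete Lipschitz computation establishes at the level of one step, and the passage to continuous time is then a limit rather than a semigroup argument.
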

The main idea behind the proof of this proposition is to analyze the discretization \eqref{eq:ulmc_sde} of the underdamped Langevin diffusion in the coordinates $(\phi, \psi)$. Note that this can be written in the following form, for some matrix $\overline{\Sigma} \in \R^{2d \times 2d}$ and function $\bar F: \R^{d} \times \R^{d} \to \R^{d} \times \R^{d}$,
\begin{align*}
    (\phi_{(k+1)h}, \psi_{(k+1)h})
    &\eqdist \bar F(\phi_{kh}, \psi_{kh}) + \mc N(0, \overline{\Sigma})\,.
\end{align*} 
This is the composition of a deterministic function $\bar F$ giving the mean of the next iterate of ULMC started at $(\phi,\psi)$, followed by addition with a Gaussian distribution giving the variance of the resulting iterate. In particular, we show that for coordinates $(\phi(x, v), \psi(x, v)) \defeq (x, x + \frac{2}{\gamma} v)$, we can find an almost sure strict contraction under $\bar F$ in the sense that
\begin{align*}
    \norm{\bar F}_{\text{Lip}} \leq 1 - \frac{m}{\sqrt{2L}}\, h + \mc{O}(Lh^2)\,,
\end{align*}
where by abuse of notation $\bar F: \R^{2d} \to \R^{2d}$, and the seminorm $\norm{g}_{\text{Lip}}$ of a function $g: \R^{2d} \to \R^{2d}$ refers to the Lipschitz constant of the function.

Since $\bar F$ is a contraction for small enough $h$, each push forward improves the log-Sobolev constant by a multiplicative factor \cite[Lemma 19]{vempalawibisono2019ula}. 
At the same time, a Gaussian convolution can only worsen the log-Sobolev constant by an additive constant \cite[Corollary 3.1]{chafai2004phientropies}.
Subsequently, the log-Sobolev constant at each iterate forms a (truncated) geometric sum, and therefore can be bounded by the infinite series. This incidentally can be used to bound the log-Sobolev constant of the ULMC iterates. 
Taking an appropriate limit of $h \to 0$ while keeping $Nh = t$, we arrive at the stated bound in the proposition. Consequently, considering the decomposition of the $\msf{KL}$, a simple application of Cauchy--Schwarz tells us that
\begin{align*}
        \msf{KL}(\hat \mu_t^{\mc M} \mmid \mu^{\mc M}) &= \int \log \frac{\hat \mu_t^{\mc M}}{\mu^{\mc M}}\, \D \hat \mu_t^{\mc M}= \msf{KL}(\hat \mu_t^{\mc M} \mmid \mu_t^{\mc M}) + \int \log \frac{\mu_t^{\mc M}}{\mu^{\mc M}} \, \D \hat \mu_t^{\mc M} \\
        &\leq \msf{KL}(\hat \mu_t^{\mc M} \mmid \mu_t^{\mc M}) 
 + \msf{KL}(\mu_t^{\mc M} \mmid \mu^{\mc M}) + \sqrt{\chi^2(\hat \mu_t^{\mc M} \mmid \mu_t^{\mc M})\times \msf{var}_{\mu_t^{\mc M}}\Bigl(\log \frac{\mu_t^{\mc M}}{\mu^{\mc M}}\Bigr)}\,.
    \end{align*}
    
The log-Sobolev inequality for $\mu_t^{\mc M}$ implies a Poincar\'e inequality, which allows us to bound the variance term by the Fisher information $\msf{FI}(\mu_t^{\mc M} \mmid \mu^{\mc M}) = \E_{\mu_t^{\mc M}}\norm{\nabla \log (\mu_t^{\mc M} /\mu^{\mc M})}^2$. This can be bounded by the same entropic hypocoercivity argument from \cite{maetal2021nesterovmcmc} that is used to generate our $\msf{TV}$ bounds, while the remaining two terms are handled respectively via the discretization analysis and again the entropic hypocoercivity argument.

\subsection{Discretization Analysis}

The main result we use to control the discretization error can be found below. 

\begin{proposition}\label{prop:main_disc_bd}
    Let $(\hat\mu_t)_{t\ge 0}$ denote the law of \eqref{eq:ulmc_sde} and let ${(\mu_t)}_{t\ge 0}$ denote the law of the continuous-time underdamped Langevin diffusion~\eqref{eq:ULD}, both initialized at some $\mu_0$. Assume that the potential $U$ is $(L,s)$-weakly smooth.
    If the step size $h$ satisfies
    \begin{align}\label{eq:h_requirement}
        h =\widetilde{\mc O}_s \Bigl(\frac{\gamma^{1/(2s)}\, \epsilon^{1/s}}{L^{1/s}\, T^{1/(2s)} \, (d +\eu R_2(\mu_0 \mmid \mu^{(a)}))^{1/2}}\Bigr)\,,
    \end{align}
    where the notation $\widetilde{\mc O}_s$ hides constants depending on $s$ as well as polylogarithmic factors including $\log N$, and $\mu^{(a)}$ is a modified target distribution (see Appendix~\ref{scn:subgaussianty} for details), then
    \begin{align*}
        \eu R_q(\hat \mu_T \mmid \mu_T)
        &\le \epsilon^2\,.
    \end{align*}
\end{proposition}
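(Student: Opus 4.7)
The plan is to control the R\'enyi divergence between the path measures of \eqref{eq:ulmc_sde} and \eqref{eq:ULD} via Girsanov's theorem and then pass to the time-$T$ marginals using the data-processing inequality. Both SDEs share diffusion coefficient $\sqrt{2\gamma}$ on the velocity coordinate, and their drifts differ only in the force term: ULD uses $-\nabla U(x_t)$ while ULMC uses $-\nabla U(x_{kh})$ for $t \in [kh,(k+1)h)$. Writing $\Delta_t \defeq \nabla U(x_t) - \nabla U(x_{kh})$ for the drift mismatch and verifying Novikov's condition, the R\'enyi-via-change-of-measure template of~\cite{chewi2021analysis} should yield an estimate of the schematic form
\[
    \eu R_q(\hat\mu_T \mmid \mu_T) \;\lesssim\; \log \mathbb{E}_{\hat{\mathbb P}} \exp\Bigl(\frac{q^\star}{4\gamma} \int_0^T \norm{\Delta_t}^2 \, \D t\Bigr),
\]
for some $q^\star = \Theta(q/(2-q))$, where $\hat{\mathbb P}$ denotes the ULMC path measure.

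Next, the weak-smoothness assumption of Definition~\ref{as:smooth} gives $\norm{\Delta_t}^2 \leq L^2\,\norm{x_t - x_{kh}}^{2s}$; and since $x_t - x_{kh} = \int_{kh}^t v_r\,\D r$ with $t-kh\le h$, Jensen's inequality produces $\norm{x_t - x_{kh}}^{2s} \leq h^{2s-1} \int_{kh}^t \norm{v_r}^{2s}\,\D r$. Plugging in, the Girsanov exponent is controlled by $\frac{q^\star L^2 h^{2s-1}}{4\gamma} \int_0^T \norm{v_r}^{2s}\,\D r$, so the key remaining task is to bound the exponential moment of this quantity under $\hat{\mathbb P}$. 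By subadditivity, or a short telescoping argument over the $N = T/h$ intervals, this reduces to uniformly controlling $\mathbb{E}_{\hat{\mathbb P}}[\exp(c\,\norm{v_r}^{2s})]$ for $r \le T$ and a small constant $c$ depending on the prefactors.

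For this uniform-in-time sub-Gaussian control of the velocity, I would compare $\hat\mu_r$ against the tilted target $\mu^{(a)}$ introduced in Appendix~\ref{scn:subgaussianty}: by construction, $\mu^{(a)}$ enforces genuine Gaussian tails on both coordinates, so $\mathbb{E}_{\mu^{(a)}}[\exp(c\,\norm{v}^{2s})] \lesssim \exp(O(d))$ for an appropriate $c$. The variational formula for R\'enyi divergence then transfers this estimate to $\hat\mu_r$ at the cost of the additive factor $\eu R_2(\hat\mu_r \mmid \mu^{(a)})$, which is itself handled by a bootstrap: one writes $\eu R_2(\hat\mu_r \mmid \mu^{(a)}) \lesssim \eu R_2(\mu_0 \mmid \mu^{(a)})$ plus the discretization error accrued by time $r$, and for $h$ small the latter is absorbed into the former. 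Collecting the estimates gives, up to constants depending on $s$ and polylogarithmic factors, $\eu R_q(\hat\mu_T \mmid \mu_T) \lesssim \frac{q L^2 T h^{2s}}{\gamma}\,(d + \eu R_2(\mu_0 \mmid \mu^{(a)}))^{s}$, and requiring the right-hand side to be at most $\epsilon^2$ yields exactly the step size in~\eqref{eq:h_requirement}.

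The main obstacle is the circularity in the exponential moment estimate: the Girsanov bound requires uniform-in-time sub-Gaussianity of $v_r$ under $\hat{\mathbb P}$, which is only available after we control the R\'enyi divergence of $\hat\mu_r$ against $\mu^{(a)}$, which itself depends on the discretization error we are trying to bound. Unwinding this self-reference through a continuity argument in $T$, while simultaneously carrying along the sensitive dependence on the weak-smoothness exponent $s \in (0,1]$ and ensuring Novikov's condition holds throughout, is where the bulk of the technical work must go.
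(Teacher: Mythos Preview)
Your overall architecture---Girsanov on path space, data processing to marginals, weak smoothness to convert $\norm{\Delta_t}^2$ into $L^2\norm{x_t-x_{kh}}^{2s}$---matches the paper's, and your endpoint estimate $\eu R_q \lesssim \frac{L^2 T h^{2s}}{\gamma}\,(d+\eu R_2(\mu_0\mmid\mu^{(a)}))^s$ is correct. But the execution differs in one substantive way, and the difference is exactly where your proposal has a gap.

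You take expectations under the ULMC path measure $\hat{\mathbb P}$; the paper instead works under the continuous-time ULD measure $Q_T$ (so the $x_t$ appearing in $\Delta_t$ follows the diffusion, not the algorithm). This choice is what lets the paper avoid the circularity you flag. Your plan is to bound $\eu R_2(\hat\mu_r\mmid\mu^{(a)})$ by $\eu R_2(\mu_0\mmid\mu^{(a)})$ plus accrued discretization error and close via continuity in $T$. But neither ULMC nor ULD targets $\mu^{(a)}$, so there is no monotonicity giving $\eu R_2(\hat\mu_r\mmid\mu^{(a)})\lesssim\eu R_2(\mu_0\mmid\mu^{(a)})$; that quantity can grow with $r$, and the continuity argument would have to absorb this growth while simultaneously using it---the self-reference does not unwind cleanly. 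The paper sidesteps this entirely: it introduces the modified ULD path measure $Q_T^{(a)}$, whose iterates \emph{are} sub-Gaussian because $t\mapsto\eu R_2(\mu_t^{(a)}\mmid\mu^{(a)})$ is monotone decreasing, and then runs a \emph{second} Girsanov argument (Proposition~\ref{prop:girs_altered}) to bound $\eu R_2(Q_T\mmid Q_T^{(a)})$. Change of measure (Lemma~\ref{lem:change_of_measure}) then transfers the tail bounds on $\norm{x_{kh}^{(a)}}$, $\norm{v_{kh}^{(a)}}$ back to the original diffusion. The exponential-of-time-integral is handled by conditioning on events $\mc E_{\delta,kh}=\{\max_{j<k}\norm{x_{jh}}\le R_\delta^x,\;\max_{j<k}\norm{v_{jh}}\le R_\delta^v\}$, iterating a one-step bound (Lemma~\ref{lem:better_stoc_calc}) via the tower property, and then unconditioning with Lemma~\ref{lem:uncondition}. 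There is no bootstrap and no continuity-in-$T$ argument.

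A smaller point: your bound $\norm{x_t-x_{kh}}^{2s}\le h^{2s-1}\int_{kh}^t\norm{v_r}^{2s}\,\D r$ shifts all the work onto $\norm{v_r}$, but $v_r$ itself depends on $\nabla U(x_{kh})$ and hence on $\norm{x_{kh}}$; both coordinates must be tracked anyway. The paper controls the position increment directly via a matrix Gr\"onwall argument (Lemma~\ref{lem:better_stoc_calc}), making the dependence on $\norm{x_{kh}}$, $\norm{v_{kh}}$, and a Brownian supremum explicit from the outset.
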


\begin{remark*}
{The condition on $h$ is dependent on $N$ only through logarithmic factors. Secondly, this is shown under generic assumptions, and can be combined with continuous-time results in $\eu R_q$ in any setting, such as the log-Sobolev or Lata\l a--Oleszkiewicz inequalities seen in \cite{chewi2021analysis}. 
}
\end{remark*}

We outline the proof of this result below. Similar to the work of \cite{chewi2021analysis}, we first invoke the data processing inequality, allowing us to bound the R\'enyi between the time marginal distributions of the iterates with R\'enyi between the path measures 
\begin{align*}
    \eu R_q(\hat \mu_T \mmid \mu_T) \leq \eu R_q(P_T \mmid Q_T)\,,
\end{align*}
where $P_T, Q_T$ are probability measures of \eqref{eq:ulmc_sde}, \eqref{eq:ULD} respectively on the space of paths $C([0,T], \R^{2d})$. 
Subsequently, we invoke Girsanov's theorem, which allows us to exactly bound the pathwise divergence by the difference between the drifts of the two processes:
\begin{align*}
    \eu R_{2q}(P_T \mmid Q_T) \lesssim \log \E \exp \Bigl(\frac{4q^2}{\gamma}\int_0^T \norm{\nabla U(x_t)  -\nabla U (x_{\floor{t/h} h})}^2 \, \D t \Bigr)\,.
\end{align*}
It remains to bound the term inside the expectation. We achieve this by conditioning on the event that $\sup_{t \in [0, T]} \norm{x_t - x_{\floor{t/h} h}}^2$ is bounded by a vanishing quantity as $h \to 0$, which we must demonstrate occurs with sufficiently high probability.
To show this, we begin with a single-step analysis, i.e., we bound the above for $T \leq h$. Compared to LMC, the main gain in this analysis is that the SDEs \eqref{eq:ULD} and \eqref{eq:ulmc_sde} match exactly in the position coordinate, while the difference between the drifts manifests solely in the momentum. After integration of the momentum, the order of error is better in the position coordinate (the dominant term is $\mc{O}(dh^2)$ compared to $\mc{O}(dh)$ seen in~\cite[Lemma 24]{chewi2021analysis}).

The technique for extending this analysis from a single step to the full time interval follows closely that seen in \cite{chewi2021analysis}. In particular, we obtain a dependence for $\norm{x_{t}}$ on $\norm{x_{kh}}$ in the interval $t \in [kh, (k + 1) h)$. Controlling the latter is quite complicated when the potential satisfies only a Poincar\'e inequality, since it is equivalent to showing sub-Gaussian tail bounds on the iterates, while the target itself is not sub-Gaussian in the position coordinate. 
By comparing against an auxiliary potential, we can show that for our choice of initialization, the iterates remain sub-Gaussian for all iterations up to $N$ (albeit with a growing constant). 
Finally, this allows us to recover our discretization result in the proposition above.

\section{Conclusion}\label{sec:conclusion}

This work provides state-of-the-art convergence guarantees for underdamped Langevin Monte Carlo algorithm in several regimes.
Our discretization analysis (Proposition \ref{prop:main_disc_bd}) in particular is generic and can be extended to any order of R\'enyi, under various conditions on the potential (Lata\l a--Oleszkiewicz, weak smoothness, etc.). 
Consequently, our results serve as a key step towards a complete understanding of the ULMC algorithm. 
However, limitations of the current continuous-time techniques do not permit us to obtain stronger iteration complexity results. 
More specifically, it is not understood how to analyze R\'enyi divergence of order greater than $2$, or if hypercontractive decay is possible when the potential satisfies a log-Sobolev inequality. 
Secondly, our discretization approach via Girsanov is currently suboptimal in the condition number (a fact noted in \cite{chewi2021analysis}), and thus does not obtain the expected dependence of $\sqrt{\kappa}$ after discretization. 
An improvement in the proof techniques would be necessary to sharpen this result. 
We believe the results and techniques developed in this work will be of interest to stimulate future research.

\section*{Acknowledgements}

We thank Jason M.\ Altschuler, Alain Durmus, and Aram-Alexandre Pooladian for helpful conversations. 
KB was supported by NSF grant DMS-2053918.
SC was supported by the NSF TRIPODS program (award DMS-2022448).
MAE was supported by NSERC Grant [2019-06167], the Connaught New Researcher Award, the CIFAR AI Chairs program, and the CIFAR AI Catalyst grant.
ML was supported by the Ontario Graduate Scholarship and Vector Institute. 

\newpage

\printbibliography

\newpage 
\appendix

\section{Explicit Form for the Underdamped Langevin Diffusion}\label{sec:explicit_form}
Recall that we evolve $(x_t, v_t)$ for time $t \in [kh, (k+1)h)$ explicitly according to the SDE \eqref{eq:ulmc_sde}, which we repeat here for convenience:
\begin{align}
    \D x_{t} &\defeq v_t \, \D t\,, \\
    \D v_{t} &\defeq - \gamma v_t + \nabla U(x_{kh}) \, \D t + \sqrt{2\gamma} \, \D B_t\,.
\end{align}
Consequently, since we fix the position $x_{kh}$ in the non-linear term, this permits an explicit solution
\begin{align}
    x_{(k+1)h}
    &= x_{kh} + \gamma^{-1}\, ( 1 - \exp(-\gamma h) )\, v_{kh} - \gamma^{-1}\, (h - \gamma^{-1}\,
    (1-\exp(-\gamma h))) \,\nabla U(x_{kh}) + W_{k}^x\,, \\
    v_{(k+1)h}
    &=\exp(-\gamma h)\, v_{kh} - \gamma^{-1}\, (1 - \exp(-\gamma h))\, \nabla U(x_{kh}) + W_{k}^v\,,
\end{align}
where $(W_k^x, W_k^v)_{k\in\N}$ is an independent sequence of pairs of variables, where each pair has the joint distribution
\begin{align*}
    \begin{bmatrix}
        W_k^x \\
        W_k^v
    \end{bmatrix} &\sim \mc{N} \biggl(0, \begin{bmatrix}
        \frac{2}{\gamma}\, (h - \frac{2}{\gamma}\, (1-\exp(-\gamma h)) + \frac{1}{2\gamma}\, (1-\exp(-2\gamma h))) & *  \\
        \frac{1}{\gamma} \,(1 - 2\exp(-\gamma h) + \exp(-2\gamma h)) & 1-\exp(-2\gamma h) 
    \end{bmatrix}\biggr)\,,
\end{align*}
where $*$ is identical to the bottom left entry.

\section{Continuous-Time Results} \label{sec:continuous_time}

\subsection{Entropic Hypocoercivity}\label{scn:entropic_hypo}

Our proof of Lemma~\ref{lem:lyapunov_decay} is based on adapting the argument on the decay of a Lyapunov function from \cite{maetal2021nesterovmcmc} (based on entropic hypocoercivity, see~\cite{villani2009hypocoercivity}) and combining it with a time change argument~\cite[Lemma 1]{dalalyan2020sampling}. We provide the details below for completeness.

\begin{proof}{\textbf{of Lemma~}\ref{lem:lyapunov_decay}} First note that variables $x_t, v_t$ with $\gamma = 2\sqrt{2L}$ following \eqref{eq:ulmc_sde} can be changed into $(\tilde x_t, \tilde v_t) = (x_{t\sqrt{\xi}}, \frac{1}{\sqrt{\xi}}\, v_{t\sqrt{\xi}})$, which satisfies the process given by
    \begin{align*}
        \D \tilde x_t &= \xi \tilde{v}_t\, \D t\,,\\
        \D \tilde v_t &= -\xi \tilde \gamma \tilde{v}_t\, \D t - \nabla U(\tilde{x}_t)\, \D t + \sqrt{2\tilde \gamma}\, \D B_t\,,
    \end{align*}
    with $\tilde \gamma = 2$, $\xi = 2L$, which are the parameters satisfying~\cite[Proposition 1]{maetal2021nesterovmcmc}. From that Proposition, we know that the Lyapunov functional given by
    \begin{align*}
        \tilde{\eu F}(\tilde\mu' \mmid \tilde\mu) = \msf{KL}(\tilde\mu' \mmid \tilde\mu) +  \E_{\tilde\mu'}\bigl[\bigl\lVert \mf N^{1/2}\, \nabla \log \frac{\tilde\mu'}{\tilde\mu}\bigr\rVert^2 \bigr]\,, \qquad \text{where}~\mf N = \frac{1}{L}\,\begin{bmatrix} 1/4 & 1/2 \\ 1/2 & 2 \end{bmatrix} \otimes I_d\,,
    \end{align*}
    decays with $\partial_t \tilde{\eu F}(\tilde \mu_t \mmid \tilde \mu) \leq -\frac{1}{10 C_{\msf{LSI}}}\, \tilde{\eu F}(\tilde \mu_t \mmid \tilde\mu).$ Here the $\msf{LSI}$ constant does not change under our coordinate transform, but now $\tilde{\mu}_t$ represents the joint law of $(\tilde x_t, \tilde v_t)$, while the stationary measure has the form $\tilde \mu(\tilde x,\tilde v) \propto \pi(\tilde x) \times \exp(-\xi\, \norm{\tilde v}^2/2)$.
    The statement of our theorem immediately follows by reversing our change of variables, which involves scaling up the gradients of the momenta by $\xi^{1/2}$, while the time is scaled down by $\xi^{1/2}$.
\end{proof}

\subsection{Contraction of ULMC}

In this section, we prove a contraction result for ULMC and use this to deduce a log-Sobolev inequality along the trajectory of the underdamped Langevin diffusion.
The mean of the next iterate of ULMC started at $(x,v)$ is given by
\begin{align*}
    F(x,v)
    &\deq \Bigl(x + \frac{1-\exp(-\gamma h)}{\gamma}\, v - \frac{h - \gamma^{-1}\,(1-\exp(-\gamma h))}{\gamma} \, \nabla U(x), \\
    &\qquad\qquad\qquad\qquad\qquad\qquad \exp(-\gamma h)\,v - \frac{1-\exp(-\gamma h)}{\gamma}\,\nabla U(x) \Bigr)\,.
\end{align*}
We will use the change of coordinates
\begin{align*}
    (\phi,\psi)
    &\deq \mc M(x, v)
    \deq \bigl(x, x + \frac{2}{\gamma}\,v\bigr)\,.
\end{align*}
In these new coordinates, the mean of the next iterate of ULMC started at $(\phi,\psi)$ is $\bar F(\phi,\psi)$, where $\bar F = \mc M \circ F\circ \mc M^{-1}$.
Since $\mc M^{-1}(\phi,\psi) = (\phi, \frac{\gamma}{2}\,(\psi - \phi))$, we can explicitly write
\begin{align*}
    \bar F(\phi,\psi)
    &= \Bigl( \phi + \frac{1-\exp(-\gamma h)}{2}\,(\psi - \phi)- \frac{h - \gamma^{-1}\,(1-\exp(-\gamma h))}{\gamma}\,\nabla U(\phi), \\
    &\qquad \phi + \frac{1+\exp(-\gamma h)}{2}\,(\psi-\phi) - \frac{h+\gamma^{-1}\,(1-\exp(-\gamma h))}{\gamma}\,\nabla U(\phi)\Bigr)\,.
\end{align*}

\begin{lemma}
    Consider the mapping $\bar F : \R^d\times \R^d\to\R^d\times\R^d$ defined above.
    Assume that $mI_d \preceq \nabla^2 U \preceq LI_d$.
    Then, for $h \lesssim 1$ and $\gamma = c\sqrt{L}$ for some $c \geq \sqrt{2}$, $\bar F$ is a contraction with parameter
    \begin{align*}
        \norm{\bar F}_{\Lip}
        &\le 1 - \frac{m}{\sqrt{2L}}\,h + O(Lh^2)\,.
    \end{align*}
\end{lemma}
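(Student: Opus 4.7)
The plan is to compute the Jacobian $D\bar F$, use simultaneous diagonalization to reduce the operator-norm calculation to a family of $2\times 2$ problems indexed by the eigenvalues of $\nabla^2 U(\phi)$, and then Taylor expand in $h$ around the identity.

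For the Jacobian, differentiating the formula for $\bar F$ with respect to $(\phi,\psi)$ yields
\[
D\bar F(\phi,\psi) = \begin{pmatrix} (1-\alpha)I_d - c_1 \nabla^2 U(\phi) & \alpha I_d \\ \alpha I_d - c_2 \nabla^2 U(\phi) & (1-\alpha) I_d \end{pmatrix},
\]
where $\alpha = (1-e^{-\gamma h})/2$, $c_1 = \gamma^{-1}(h - \gamma^{-1}(1-e^{-\gamma h}))$, and $c_2 = \gamma^{-1}(h + \gamma^{-1}(1-e^{-\gamma h}))$. Every $d\times d$ block is a polynomial in the single symmetric matrix $\nabla^2 U(\phi)$, so an orthonormal change of basis that diagonalizes $\nabla^2 U(\phi)$ simultaneously block-diagonalizes $D\bar F(\phi,\psi)$ into $d$ independent $2 \times 2$ matrices, one per eigenvalue $\lambda \in [m, L]$ of $\nabla^2 U(\phi)$, of the form
\[
J_\lambda = \begin{pmatrix} 1-\alpha - c_1 \lambda & \alpha \\ \alpha - c_2 \lambda & 1-\alpha \end{pmatrix}.
\]
Consequently $\|D\bar F(\phi,\psi)\|_{\mathrm{op}} = \max_{\lambda \in [m,L]} \|J_\lambda\|_{\mathrm{op}}$, and it suffices to bound this $2 \times 2$ operator norm uniformly in $\lambda$.

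To do so, I fix $\gamma = \sqrt{2L}$ and Taylor expand $e^{-\gamma h} = 1 - \gamma h + (\gamma h)^2/2 + O((\gamma h)^3)$ to write $J_\lambda = I + h M_\lambda + h^2 R_\lambda$, where
\[
M_\lambda = \begin{pmatrix} -\gamma/2 & \gamma/2 \\ \gamma/2 - 2\lambda/\gamma & -\gamma/2 \end{pmatrix}
\]
and the remainder satisfies $\|R_\lambda\|_{\mathrm{op}} = O(L)$ using $\lambda \le L$ and $\gamma \asymp \sqrt{L}$. Applying $\|I + A\|_{\mathrm{op}}^2 = \lambda_{\max}(I + A + A^\top + A^\top A)$ with $A = hM_\lambda + h^2 R_\lambda$ gives $\|J_\lambda\|_{\mathrm{op}} = 1 + \tfrac{h}{2}\, \lambda_{\max}(M_\lambda + M_\lambda^\top) + O(Lh^2)$. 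A direct computation shows
\[
M_\lambda + M_\lambda^\top = \begin{pmatrix} -\gamma & \gamma - 2\lambda/\gamma \\ \gamma - 2\lambda/\gamma & -\gamma \end{pmatrix},
\]
with eigenvalues $-\gamma \pm (\gamma - 2\lambda/\gamma)$. The hypothesis $c \ge \sqrt{2}$, i.e.\ $\gamma^2 \ge 2L$, guarantees $\gamma - 2\lambda/\gamma \ge 0$ on $[m, L]$, so the maximum eigenvalue equals $-2\lambda/\gamma$, which is largest (least negative) at $\lambda = m$. Substituting $\gamma = \sqrt{2L}$ then yields $\|\bar F\|_{\Lip} \le 1 - mh/\sqrt{2L} + O(Lh^2)$, as claimed.

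The main technical point to watch is the bookkeeping of the $O(Lh^2)$ remainder: one must verify that every second-order contribution from the expansions of $\alpha$, $c_1$, $c_2$, and their products with $\lambda \in [m, L]$, scales at most like $Lh^2$. The most delicate term is $c_2 \lambda$, with $c_2 \asymp h/\sqrt{L}$ contributing a size-$\sqrt{L}\,h$ entry to $M_\lambda$; its quadratic-in-$h$ correction $-\lambda h^2/2 + O(h^3)$ must be bounded using $\lambda \le L$ to keep the remainder at $O(Lh^2)$ rather than $O(L^{3/2} h^2)$. Once this accounting is in hand, the argument reduces to the symmetric $2 \times 2$ eigenvalue computation above.
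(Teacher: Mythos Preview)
Your proof is correct and follows essentially the same route as the paper: reduce the Jacobian to a family of $2\times 2$ blocks indexed by the eigenvalues $\lambda\in[m,L]$ of $\nabla^2 U(\phi)$, and control the operator norm via an eigenvalue computation that hinges on the condition $\gamma^2\ge 2L$. The only cosmetic difference is that you linearize first and read the leading contraction rate off the symmetric part $M_\lambda+M_\lambda^\top$, whereas the paper drops the $O(h^2)$ block, forms $AA^\top$, simplifies it to a matrix $B$ up to $O(Lh^2)$, and computes the eigenvalues of $B$ exactly; both arrive at the same decisive inequality and the same $1-\tfrac{m}{\sqrt{2L}}h+O(Lh^2)$ bound.
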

\begin{proof}
    We compute the partial derivatives
    \begin{align*}
        \partial_\phi {\bar F(\phi,\psi)}_\phi
        &= \frac{1+\exp(-\gamma h)}{2}\, I_d - \frac{h-\gamma^{-1}\,(1-\exp(-\gamma h))}{\gamma}\,\nabla^2 U(\phi)\,, \\
        \partial_\phi {\bar F(\phi,\psi)}_\psi
        &= \frac{1-\exp(-\gamma h)}{2}\, I_d - \frac{h+\gamma^{-1}\,(1-\exp(-\gamma h))}{\gamma}\,\nabla^2 U(\phi)\,, \\
        \partial_\psi {\bar F(\phi,\psi)}_\phi
        &= \frac{1-\exp(-\gamma h)}{2}\, I_d\,, \\
        \partial_\psi {\bar F(\phi,\psi)}_\psi
        &= \frac{1+\exp(-\gamma h)}{2} \, I_d\,.
    \end{align*}
    Let $a \deq \exp(-\gamma h)$ and $b \deq \frac{2}{\gamma}\,(h+\gamma^{-1}\,(1-\exp(-\gamma h)))$.
    Since
    \begin{align*}
        \frac{h-\gamma^{-1}\,(1-\exp(-\gamma h))}{\gamma} = O(h^2)\,,
    \end{align*}
    we have
    \begin{align*}
        \norm{\nabla \bar F(\phi,\psi)}_{\rm op}
        &\le \frac{1}{2}\, \Bigl\lVert \underbrace{\begin{bmatrix} (1+a)\,I_d & (1-a)\,I_d - b\,\nabla^2 U(\phi) \\ (1-a)\, I_d & (1+a)\, I_d \end{bmatrix}}_{\eqqcolon A}\Bigr\rVert_{\rm op} + O(L h^2)\,.
    \end{align*}
    Then,
    \begin{align*}
        AA^\T
        &= \begin{bmatrix} {(1+a)}^2\,I_d + {((1-a)\,I_d-b\,\nabla^2 U(\phi))}^2 & * \\ 2\,(1-a^2)\,I_d - (1+a)\,b\,\nabla^2 U(\phi) & \{{(1-a)}^2 + {(1+a)}^2\}\,I_d\end{bmatrix}\,,
    \end{align*}
    where the upper right entry is determined by symmetry.
    Since $1-a = \Theta(\gamma h)$ and $b = O(h/\gamma)$, one can simplify this as follows:
    \begin{align*}
        &\Bigl\lVert AA^\T - 2\,\underbrace{\begin{bmatrix} (1+a^2)\,I_d & (1-a^2)\,I_d - b\,\nabla^2 U(\phi) \\ (1-a^2)\, I_d - b\,\nabla^2 U(\phi) & (1+a^2)\, I_d \end{bmatrix}}_{\eqqcolon B}\Bigr\rVert_{\rm op} \\
                                                            &\qquad \le O\bigl(\frac{L^2 h^2}{\gamma^2} + Lh^2\bigr)\,.
    \end{align*}
    One can check that the eigenvalues of the matrix $B$ are $1+a^2 \pm (1-a^2 - b\lambda)$, where $\lambda$ ranges over the eigenvalues of $\nabla^2 U(\phi)$.
    Hence, we can bound
    \begin{align*}
        \norm B_{\rm op}
        &\le \max\{2a^2+Lb, 2-bm\}\,.
    \end{align*}
    We note that
    \begin{align*}
        2a^2 + Lb
        &= 2\exp(-2\gamma h) + \frac{2L\,(h+\gamma^{-1}\,(1-\exp(-\gamma h)))}{\gamma} \\
        &= 2\,\Bigl\{1 - 2\gamma h + \frac{2Lh}{\gamma} + O(\gamma^2 h^2 + Lh^2)\Bigr\}\,.
    \end{align*}
    In order for this to be strictly smaller than $2$, we must take $\gamma > \sqrt L$.
    We choose $\gamma = c\sqrt{L}$ for $c \geq \sqrt{2}$,
    in which case
    \begin{align*}
        \norm B_{\rm op}
        &\le 2 \max\Bigl\{1-c\sqrt{L}\,h,\; 1- m\sqrt{\frac{2}{L}}\,h\Bigr\} + O(Lh^2) \\
        &= 2\,\Bigl(1-m\sqrt{\frac{2}{L}}\,h\Bigr) + O(Lh^2)\,.
    \end{align*}
    We deduce that
    \begin{align*}
        \norm{AA^\T}_{\rm op}
        &\le 4\,\Bigl(1-m\sqrt{\frac{2}{L}}\,h\Bigr) + O(Lh^2)
    \end{align*}
    and therefore
    \begin{align*}
        \norm{\nabla \bar F(\phi,\psi)}_{\rm op}
        &\le \sqrt{1-m\sqrt{\frac{2}{L}}\,h} + O(Lh^2)
        \le 1-\frac{m}{\sqrt{2L}}\,h + O(Lh^2)\,. 
    \end{align*}
\end{proof}

The ULMC iterate is
\begin{align*}
    (x_{(k+1)h}, v_{(k+1)h})
    &\eqdist F(x_{kh}, v_{kh}) + \mc N(0, \Sigma)\,,
\end{align*}
where $\Sigma$ is the covariance of the Gaussian random vector in the LMC update.
In the new coordinates, this iteration can be written
\begin{align*}
    (\phi_{(k+1)h}, \psi_{(k+1)h})
    &\eqdist \bar F(\phi_{kh}, \psi_{kh}) + \mc N(0, \mc M \Sigma \mc M^\T)\,.
\end{align*}
Writing $\mc M \Sigma \mc M^\T = \bar \Sigma\otimes I_d$, we can compute
\begin{align*}
    \bar \Sigma_{1,1}
    &= \frac{2h}{\gamma} - \frac{3}{\gamma^2} + \frac{4\exp(-\gamma h)}{\gamma^2} - \frac{\exp(-2\gamma h)}{\gamma^2}
    = O(\gamma h^3)\,, \\
    \bar \Sigma_{1,2}
    &= \frac{2h}{\gamma} - \frac{1}{\gamma^2} + \frac{\exp(-2\gamma h)}{\gamma^2}
    = O(h^2)\,, \\
    \bar \Sigma_{2,2}
    &= \frac{2h}{\gamma} + \frac{5}{\gamma^2} - \frac{8\exp(-\gamma h)}{\gamma^2} + \frac{3\exp(-2\gamma h)}{\gamma^2}
    = \frac{4h}{\gamma^2} + O(h^2)\,.
\end{align*}
We conclude that
\begin{align*}
    \norm{\bar \Sigma}_{\rm op}
    &\le \frac{4h}{\gamma} + O(h^2)\,.
\end{align*}
Hence, $C_{\msf{LSI}}(\mc N(0, \mc M \Sigma \mc M^\T)) \le \frac{4h}{\gamma^2} + O(h^2)$.

\begin{proposition}
    Let $\hat\mu_t^{\mc M} \deq \law(\phi_t,\psi_t)$.
    Then, for all $\varepsilon > 0$, for all sufficiently small $h > 0$ (depending on $\varepsilon$), one has
    \begin{align*}
        C_{\msf{LSI}}(\hat\mu_{Nh}^{\mc M})
        &\le \Bigl(1 - \bigl(m\sqrt{\frac{2}{L}} - \varepsilon\bigr) \,h \Bigr){\vphantom{\Big|}}^N \, C_{\msf{LSI}}(\hat \mu_0^{\mc M})
        + \frac{4}{2m-\varepsilon\sqrt{2L}} + O\bigl( \frac{h\sqrt L}{m}\bigr)\,.
    \end{align*}
\end{proposition}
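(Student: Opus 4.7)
The plan is to iterate a one-step recursion on the LSI constant that combines a contraction estimate with a noise-convolution estimate. Writing the ULMC update in the $(\phi,\psi)$ coordinates as
\[
(\phi_{(k+1)h}, \psi_{(k+1)h}) \eqdist \bar F(\phi_{kh}, \psi_{kh}) + \mc N(0, \mc M\Sigma\mc M^\T)\,,
\]
I would invoke the two stability properties of the log-Sobolev constant recalled earlier in this section: (i) under a Lipschitz push-forward by $T$, the LSI constant scales by at most $\|T\|_{\Lip}^2$ \cite[Lemma 19]{vempalawibisono2019ula}; and (ii) under convolution with a distribution $\nu$ satisfying LSI, the constant inflates additively by at most $C_{\msf{LSI}}(\nu)$ \cite[Corollary 3.1]{chafai2004phientropies}. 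Applying these with the contraction estimate $\|\bar F\|_{\Lip} \le 1 - \tfrac{m}{\sqrt{2L}}\,h + O(Lh^2)$ established just above, together with the LSI bound on $\mc N(0, \mc M\Sigma\mc M^\T)$ from the covariance calculation, yields the one-step recursion
\[
C_{\msf{LSI}}(\hat\mu_{(k+1)h}^{\mc M}) \le \bigl(1 - m\sqrt{2/L}\,h + O(Lh^2)\bigr)\, C_{\msf{LSI}}(\hat\mu_{kh}^{\mc M}) + \frac{4h}{\gamma} + O(h^2)\,,
\]
after squaring the Lipschitz constant and noting that $2/\sqrt{2L} = \sqrt{2/L}$.

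Given $\varepsilon > 0$, I would choose $h$ small enough (depending on $\varepsilon$ and $L$) so that the $O(Lh^2)$ correction in the contraction factor is at most $\varepsilon h$. This converts the recursion to the clean linear form
\[
C_{\msf{LSI}}(\hat\mu_{(k+1)h}^{\mc M}) \le \bigl(1 - (m\sqrt{2/L} - \varepsilon)\,h\bigr)\, C_{\msf{LSI}}(\hat\mu_{kh}^{\mc M}) + \frac{4h}{\gamma} + O(h^2)\,.
\]
Iterating $N$ times and bounding the geometric series by $\sum_{k=0}^{N-1}(1-\beta h)^k \le 1/(\beta h)$ with $\beta \defeq m\sqrt{2/L} - \varepsilon$, I obtain
\[
C_{\msf{LSI}}(\hat\mu_{Nh}^{\mc M}) \le \bigl(1 - (m\sqrt{2/L}-\varepsilon)\,h\bigr)^N\, C_{\msf{LSI}}(\hat\mu_0^{\mc M}) + \frac{4/\gamma}{m\sqrt{2/L} - \varepsilon} + \frac{O(h)}{m\sqrt{2/L}-\varepsilon}\,.
\]
With the minimal admissible choice $\gamma = \sqrt{2L}$ (i.e., $c = \sqrt 2$), the stationary noise term rewrites as $\frac{4}{2m-\varepsilon\sqrt{2L}}$ after multiplying numerator and denominator by $\sqrt{L/2}$, and the residual simplifies to $O(h\sqrt{L}/m)$.

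The main obstacle I anticipate is the careful bookkeeping of higher-order error terms, which dictates the ``$h$ sufficiently small (depending on $\varepsilon$)'' hypothesis. Specifically, the $O(Lh^2)$ contraction correction must be absorbable into $\varepsilon h$, forcing $h \lesssim \varepsilon/L$; and the $O(h^2)$ additive error in the one-step LSI recursion accumulates over the geometric sum into the residual $O(h\sqrt L/m)$. Beyond this, no new ideas are needed: the argument reduces to a scalar linear recursion handled by an elementary geometric series, building entirely on the two cited LSI stability lemmas and the contraction of $\bar F$.
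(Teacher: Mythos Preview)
Your proposal is correct and follows essentially the same approach as the paper's proof: derive the one-step recursion $C_{\msf{LSI}}(\hat\mu_{(k+1)h}^{\mc M}) \le \|\bar F\|_{\Lip}^2\, C_{\msf{LSI}}(\hat\mu_{kh}^{\mc M}) + C_{\msf{LSI}}(\mc N(0,\mc M\Sigma\mc M^\T))$ from the two cited LSI stability results, absorb the $O(Lh^2)$ contraction error into $\varepsilon h$ for $h$ small, iterate, and sum the geometric series with $\gamma = \sqrt{2L}$ to produce the stated constants. The paper's own proof is essentially this argument verbatim, with the same simplifications.
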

\begin{proof}
    The LSI constant evolves according to
    \begin{align*}
        C_{\msf{LSI}}(\hat \mu_{(k+1)h}^{\mc M})
        &\le \norm{\bar F}_{\rm op}^2 \, C_{\msf{LSI}}(\hat \mu_{kh}^{\mc M}) + C_{\msf{LSI}}\bigl(\mc N(0, \mc M \Sigma \mc M^\T)\bigr) \\
        &\le \Bigl(1 - m\sqrt{\frac{2}{L}} \,h + O(Lh^2)\Bigr) \, C_{\msf{LSI}}(\hat \mu_{kh}^{\mc M}) + \frac{4h}{\gamma} + O(h^2)\,.
    \end{align*}
    For $h$ sufficiently small, we have
    \begin{align*}
        C_{\msf{LSI}}(\hat \mu_{(k+1)h}^{\mc M})
        &\le \Bigl(1 - \bigl(m\sqrt{\frac{2}{L}} - \varepsilon\bigr) \,h \Bigr) \, C_{\msf{LSI}}(\hat \mu_{kh}^{\mc M}) + \frac{4h}{\gamma} + O(h^2)\,.
    \end{align*}
    Iterating,
    \begin{align*}
        C_{\msf{LSI}}(\hat \mu_{Nh}^{\mc M})
        &\le \Bigl(1 - \bigl(m\sqrt{\frac{2}{L}} - \varepsilon\bigr) \,h \Bigr){\vphantom{\Big|}}^N \, C_{\msf{LSI}}(\hat \mu_0^{\mc M})
        + \frac{4}{2m-\varepsilon\sqrt{2L}} + O\bigl( \frac{h\sqrt L}{m}\bigr)\,.
    \end{align*}
    This completes the proof.
\end{proof}

\begin{corollary}
    Let $\mu_t^{\mc M}$ now denote the law of the continuous-time underdamped Langevin diffusion with $\gamma = c\sqrt{L}$ for $c \geq \sqrt{2}$ in the $(\phi,\psi)$ coordinates.
    Then,
    \begin{align*}
        C_{\msf{LSI}}(\mu_t^{\mc M})
        &\le \exp\Bigl( - m\sqrt{\frac{2}{L}}\,t\Bigr)\, C_{\msf{LSI}}(\mu_0^{\mc M}) + \frac{2}{m}\,.
    \end{align*}
\end{corollary}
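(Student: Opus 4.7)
The plan is to deduce the corollary by passing to the limit $h \to 0$ in the preceding proposition, with the number of discrete steps chosen so that the total simulated time is exactly $t$. First, I would fix $\varepsilon > 0$ and set $N = N(h) \coloneqq \lfloor t/h \rfloor$, so that $N(h)\,h \to t$ as $h \to 0$. For each $h$ small enough (depending on $\varepsilon$) the proposition applies, and the geometric factor obeys
\begin{align*}
    \Bigl(1 - \bigl(m\sqrt{\tfrac{2}{L}} - \varepsilon\bigr)\,h\Bigr)^{\!N(h)} \xrightarrow{h\to 0} \exp\Bigl(-\bigl(m\sqrt{\tfrac{2}{L}} - \varepsilon\bigr)\,t\Bigr)\,,
\end{align*}
while the additive error term $O(h\sqrt{L}/m)$ from the proposition vanishes.

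Next, I would combine this with convergence of the ULMC trajectory to the continuous-time diffusion. Because $\nabla U$ is $L$-Lipschitz and the ULMC step is exactly a closed-form Euler-type integrator (as in Appendix \ref{sec:explicit_form}), classical strong-convergence estimates for SDE discretizations give that $\hat\mu_{N(h)h}^{\mc M} \to \mu_t^{\mc M}$ weakly (indeed in stronger senses) on the bounded interval $[0,t]$. The log-Sobolev constant is lower semicontinuous with respect to this mode of convergence: for any fixed smooth test function $f$, both sides of the defining inequality $\mathsf{ent}_\mu(f^2) \le 2C\,\E_\mu[\norm{\nabla f}^2]$ pass to the limit, so a uniform constant for the approximants survives. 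Consequently,
\begin{align*}
    C_{\msf{LSI}}(\mu_t^{\mc M}) \le \liminf_{h\to 0} C_{\msf{LSI}}(\hat\mu_{N(h)h}^{\mc M}) \le \exp\Bigl(-\bigl(m\sqrt{\tfrac{2}{L}} - \varepsilon\bigr)\,t\Bigr)\, C_{\msf{LSI}}(\mu_0^{\mc M}) + \frac{4}{2m - \varepsilon\sqrt{2L}}\,.
\end{align*}
Finally, sending $\varepsilon \to 0$ and using $\tfrac{4}{2m} = \tfrac{2}{m}$ gives the stated bound.

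The main obstacle will be rigorously justifying the lower-semicontinuity step in sufficient generality: the LSI constant is generally only stable under convergence strong enough that entropy and Fisher information pass through, which one should verify here from the smoothness of the ULMC semigroup and the explicit Gaussian smoothing in each step. If that is found to be delicate, a cleaner backup route is to directly differentiate the continuous-time analogue: mimic the discrete contraction lemma at the infinitesimal level to obtain a differential inequality of the form $\partial_t C_{\msf{LSI}}(\mu_t^{\mc M}) \le -m\sqrt{2/L}\,C_{\msf{LSI}}(\mu_t^{\mc M}) + 2$, and integrate by Grönwall. Either way, the proof is short once the preceding discrete-time proposition and the identification $C_{\msf{LSI}}(\hat\mu_0^{\mc M}) = C_{\msf{LSI}}(\mu_0^{\mc M})$ at $t=0$ are in hand.
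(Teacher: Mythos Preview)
Your approach is correct and essentially identical to the paper's: let $h\searrow 0$ with $Nh\to t$ in the preceding proposition, then send $\varepsilon\searrow 0$. The paper's proof is a single line that leaves the passage to the limit implicit, whereas you have (appropriately) flagged the lower-semicontinuity of the LSI constant under convergence of the discretized laws as the one point requiring care.
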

\begin{proof}
    In the preceding proposition, let $h\searrow 0$ while $Nh\to t$, and then let $\varepsilon \searrow 0$.
\end{proof}

\section{Discretization Analysis}

We consider the discretization used in~\cite{maetal2021nesterovmcmc}, with the following differential form:
\begin{align*}
    &d\hat{x}_t =  \hat v_t \,\D t\,,\\
    &d\hat{v}_t = -\gamma  \hat v_t \,\D t - \nabla U(\hat x_{kh})\,\D t + \sqrt{2\gamma} \,\D B_t\,,
\end{align*}
and we define the variable $\hat{w}_t$ as the tuple $(\hat x_t, \hat v_t)$, for $t \in [kh, (k+1)h]$.

\subsection{Technical Lemmas}

\begin{theorem}[{Girsanov's Theorem, Adapted from~\cite[Theorem 8.6.8]{oksendal2013stochastic}}]\label{thm:girsanov}
    Consider stochastic processes ${(x_t)}_{t\ge 0}$, ${(b_t^P)}_{t\ge 0}$, ${(b_t^Q)}_{t\ge 0}$ adapted to the same filtration, and $\sigma \in \mathbb{R}^{d \times d}$ any constant, possibly degenerate, matrix. Let $P_T$ and $Q_T$ be probability measures on the path space $C([0,T]; \R^d)$ such that ${(w_t)}_{t\ge 0}$ evolves according to
    \begin{align*}
        \D w_t
        &= b_t^P \, \D t + \sigma \, \D B_t^P \qquad\text{under}~P_T\,, \\
        \D w_t
        &= b_t^Q \, \D t + \sigma \, \D B_t^Q \qquad\text{under}~Q_T\,,
    \end{align*}
    where $B^P$ is a $P_T$-Brownian motion and $B^Q$ is a $Q_T$-Brownian motion.
    Furthermore, suppose there exists a process $(u_t)_{t\ge 0}$ such that 
    \begin{equation*}
        \sigma \, u_t = b^P_t - b^Q_t \,, 
    \end{equation*}
    and 
    \begin{equation*}
        \mathbb{E}^{Q_T} \exp\Bigl( 2q^2 \int_0^T
        \| u_s \|^2 \, \D s 
        \Bigr) < \infty \,,
    \end{equation*}
    Consequently, if we define $\sigma^{\dagger}$ as the Moore--Penrose pseudo-inverse of $\sigma$, then by the previous supposition we have $u_t = \sigma^\dagger\, (b_t^P - b_t^Q)$.
    Then,
    \begin{align*}
        \frac{\D P_T}{\D Q_T}
        &= \exp\Bigl(\int_0^T \langle \sigma^\dagger\,(b_t^P - b_t^Q), \D B_t^Q \rangle - \frac{1}{2} \int_0^T \norm{\sigma^\dagger\,(b_t^P - b_t^Q)}^2 \, \D t\Bigr)\,.
    \end{align*}
\end{theorem}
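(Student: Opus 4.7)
The plan is to verify Girsanov's theorem in the form stated by reducing to the standard (nondegenerate) version of Girsanov's theorem, using the Moore--Penrose pseudoinverse to handle the possibly degenerate diffusion coefficient $\sigma$. First, I would observe that the hypothesis guarantees that $b_t^P - b_t^Q$ lies in the range of $\sigma$ for each $t$, so that the process $u_t = \sigma^\dagger\,(b_t^P - b_t^Q)$ is well-defined and satisfies $\sigma u_t = b_t^P - b_t^Q$. The assumed exponential moment bound then gives us a Novikov-type condition for $u_t$, which is stronger than what is needed to conclude that the candidate exponential is a true martingale under $Q_T$.

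Next, I would introduce the candidate Radon--Nikodym density
\begin{align*}
    Z_T \deq \exp\Bigl(\int_0^T \langle u_t, \D B_t^Q\rangle - \frac{1}{2}\int_0^T \norm{u_t}^2 \, \D t\Bigr) \,,
\end{align*}
and use the assumed exponential integrability to check that the process $(Z_t)_{t\in[0,T]}$ is a true $Q_T$-martingale with $\mathbb{E}^{Q_T} Z_T = 1$ (for example via Novikov's criterion, since the stated assumption with $q \ge 1/2$ implies the standard condition $\mathbb{E}^{Q_T}\exp(\tfrac12\int_0^T\|u_t\|^2\,\D t)<\infty$). Defining a new measure $\tilde P_T$ on path space by $\D\tilde P_T/\D Q_T = Z_T$, the classical Girsanov theorem (for a nondegenerate Brownian motion $B^Q$) tells us that under $\tilde P_T$, the process
\begin{align*}
    \tilde B_t \deq B_t^Q - \int_0^t u_s \, \D s
\end{align*}
is a standard Brownian motion.

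Substituting into the dynamics of $w$ under $Q_T$, I would write
\begin{align*}
    \D w_t = b_t^Q \, \D t + \sigma \, \D B_t^Q = b_t^Q \, \D t + \sigma u_t \, \D t + \sigma \, \D\tilde B_t = b_t^P \, \D t + \sigma \, \D\tilde B_t\,,
\end{align*}
so that under $\tilde P_T$, the process $w$ satisfies exactly the SDE characterizing $P_T$ with the same initial law. To conclude $\tilde P_T = P_T$, I would appeal to uniqueness in law for the SDE, which holds path-by-path here because the two descriptions match the same adapted drift-and-diffusion functional of $w$ and the same Brownian increments; alternatively, one can argue directly that the law of $w$ under $\tilde P_T$ and the law prescribed by $P_T$ must agree on the canonical filtration, as they share the same martingale problem. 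This gives the claimed formula for $\D P_T/\D Q_T$.

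The main obstacle I anticipate is the degeneracy of $\sigma$: classical Girsanov statements assume $\sigma$ invertible so that the drift change is uniquely determined, whereas here we only know that the drift difference lies in the range of $\sigma$ and must choose the minimum-norm lift via $\sigma^\dagger$. The identity $\sigma \sigma^\dagger (b_t^P - b_t^Q) = b_t^P - b_t^Q$ (which holds precisely because of the range assumption) is the key algebraic fact that makes the substitution above valid; any other measurable $u_t$ with $\sigma u_t = b_t^P - b_t^Q$ would yield the same density on the $\sigma$-algebra generated by $w$ (since only $\sigma u_t$ appears in the dynamics), so the pseudoinverse choice is canonical but not essential — what matters is that the stated exponential moment bound controls $\|u_t\|$ and hence the martingale property of $Z$.
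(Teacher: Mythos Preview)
The paper does not supply its own proof of this statement: it is recorded as a technical lemma adapted from \cite[Theorem 8.6.8]{oksendal2013stochastic} and is invoked without argument, so there is no in-paper proof to compare against. Your sketch is the standard route and is sound in outline --- use the assumed exponential moment to verify Novikov's criterion, build the exponential martingale $Z_T$, apply the classical Girsanov theorem to the (nondegenerate) Brownian motion $B^Q$ to obtain a new Brownian motion $\tilde B$, rewrite the dynamics of $w$ using $\sigma u_t = b_t^P - b_t^Q$, and conclude by uniqueness in law. Your handling of the degenerate $\sigma$ via the pseudoinverse and the range identity $\sigma\sigma^\dagger(b_t^P-b_t^Q)=b_t^P-b_t^Q$ is the correct adaptation. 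One minor caveat: in the degenerate case the stochastic integral $\int_0^T\langle u_t,\D B_t^Q\rangle$ in the density formula is not in general measurable with respect to the path $w$ alone, so strictly speaking the formula should be read on the enlarged filtration generated by $B^Q$; this does not affect how the result is used downstream in the paper (where only moments of $\D P_T/\D Q_T$ under $Q_T$ are needed, as in Corollary~\ref{cor:girsanov}).
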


In fact, we will only need the following corollary.
\begin{corollary}\label{cor:girsanov}
    For any event $\mc E$ and $q \ge 1$,
    \begin{align*}
        \E^{Q_T}\bigl[\bigl( \frac{\D P_T}{\D Q_T} \bigr)^q \one_{\mc E} \bigr]
        &\le \sqrt{\E\Bigl[\exp\Bigl(2q^2 \int_0^T \norm{\sigma^\dagger\,(b_t^P - b_t^Q)}^2 \, \D t\Bigr) \one_{\mc E}\Bigr]}\,.
    \end{align*}
\end{corollary}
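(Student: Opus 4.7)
The plan is to apply Girsanov's theorem to write the Radon--Nikodym derivative $\D P_T/\D Q_T$ as a Dol\'eans--Dade exponential, then use Cauchy--Schwarz to separate a true martingale factor (whose $Q_T$-expectation is bounded by one) from the exponential of the squared-drift-difference integral appearing in the claim. Concretely, set $u_t \defeq \sigma^\dagger\,(b_t^P - b_t^Q)$ so that Theorem~\ref{thm:girsanov} gives
\begin{equation*}
\Bigl(\frac{\D P_T}{\D Q_T}\Bigr)^q = \exp\Bigl(q\int_0^T \langle u_t, \D B_t^Q \rangle - \frac{q}{2} \int_0^T \norm{u_t}^2 \, \D t\Bigr).
\end{equation*}

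I would then perform the algebraic rearrangement
\begin{equation*}
\Bigl(\frac{\D P_T}{\D Q_T}\Bigr)^q = M_T \cdot \exp\Bigl(\bigl(q^2 - \tfrac{q}{2}\bigr) \int_0^T \norm{u_t}^2 \, \D t\Bigr),
\end{equation*}
where
\begin{equation*}
M_T \defeq \exp\Bigl(q\int_0^T \langle u_t, \D B_t^Q \rangle - q^2 \int_0^T \norm{u_t}^2 \, \D t\Bigr),
\end{equation*}
so that $M_T^2$ is exactly the stochastic exponential $\mc E(2qu)_T$ associated with the drift $2qu$. Multiplying by $\one_{\mc E}$ and applying Cauchy--Schwarz under $Q_T$ yields
\begin{equation*}
\E^{Q_T}\Bigl[\Bigl(\frac{\D P_T}{\D Q_T}\Bigr)^q \one_{\mc E}\Bigr] \le \sqrt{\E^{Q_T}[M_T^2]} \cdot \sqrt{\E^{Q_T}\Bigl[\exp\Bigl((2q^2 - q) \int_0^T \norm{u_t}^2 \, \D t\Bigr) \one_{\mc E}\Bigr]}.
\end{equation*}
Since $2q^2 - q \le 2q^2$ (as $q \ge 1$ and the integrand is nonnegative), the second factor is bounded by the square root in the claim, and it remains to argue $\E^{Q_T}[M_T^2] \le 1$.

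The main obstacle is precisely this last step: $\mc E(2qu)_T$ is a priori only a nonnegative local martingale (hence a supermartingale), so one needs Novikov's (or equivalently the finite exponential moment) condition to conclude it is a true martingale with expectation exactly one. This is where the integrability hypothesis $\E^{Q_T} \exp(2q^2 \int_0^T \|u_s\|^2 \, \D s) < \infty$ imposed in Theorem~\ref{thm:girsanov} is used: it is precisely Novikov's criterion applied to the drift $2qu$, and so it guarantees $\E^{Q_T}[\mc E(2qu)_T] = 1$. If one only has the weaker supermartingale bound, the conclusion $\E^{Q_T}[M_T^2] \le 1$ still holds, which is all that is needed. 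Chaining the two factors then gives the stated inequality.
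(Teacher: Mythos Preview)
Your proposal is correct and follows essentially the same route as the paper: express $(\D P_T/\D Q_T)^q$ via the Girsanov density, split it by Cauchy--Schwarz into the stochastic exponential $\mc E(2qu)_T$ and the exponential of $(2q^2-q)\int_0^T\|u_t\|^2\,\D t$, bound the former by one via the (super)martingale property, and then use $2q^2-q\le 2q^2$. The only cosmetic difference is that the paper keeps $\one_{\mc E}$ in both Cauchy--Schwarz factors (using $\one_{\mc E}=\one_{\mc E}^{1/2}\one_{\mc E}^{1/2}$) whereas you keep it only in the drift factor; your placement is slightly cleaner and your remark that the supermartingale inequality already suffices (without invoking Novikov) is a nice sharpening.
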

\begin{proof}
    Using Cauchy--Schwarz, and then It\^o's Lemma, we find
    \begin{align*}
        &\E^{Q_T}\bigl[\bigl( \frac{\D P_T}{\D Q_T} \bigr)^q \one_{\mc E} \bigr] = \E^{Q_T} \Bigl[\exp\Bigl(q \int_0^T \langle \sigma^\dagger\,(b_t^P - b_t^Q), \D B_t^Q \rangle - \frac{q}{2} \int_0^T \norm{\sigma^\dagger\,(b_t^P - b_t^Q)}^2 \, \D t\Bigr)\one_{\mc E}\Bigr] \\
        &\qquad \leq \sqrt{\E^{Q_T} \Bigl[\exp\Bigl((2q^2 - q) \int_0^T \norm{\sigma^\dagger\,(b_t^P - b_t^Q)}^2 \, \D t\Bigr)\one_{\mc E}\Bigr]} \\
        &\qquad\qquad{} \times \underset{= 1}{\underbrace{\sqrt{\E^{Q_T} \Bigl[\exp\Bigl(2q \int_0^T \langle \sigma^\dagger\,(b_t^P - b_t^Q), \D B_t^Q \rangle - 2q^2 \int_0^T \norm{\sigma^\dagger\,(b_t^P - b_t^Q)}^2 \, \D t\Bigr)\one_{\mc E}\Bigr]}}} \\
        &\qquad \leq \sqrt{\E^{Q_T} \Bigl[\exp\Bigl(2q^2 \int_0^T \norm{\sigma^\dagger\,(b_t^P - b_t^Q)}^2 \, \D t\Bigr)\one_{\mc E}\Bigr]}\,.
    \end{align*}
    Here, we used the fact that $t\mapsto \exp(\int_0^t \langle u_\tau, \D B_\tau\rangle - \frac{1}{2} \int_0^t \norm{u_\tau}^2 \, \D \tau)$ is a local martingale.
\end{proof}

We can identify the following for the process $(x_t, v_t)$:
\begin{align*}
    \sigma = \begin{bmatrix}
    0 &  0 \\
    0 & \sqrt{2\gamma}\,I_d
    \end{bmatrix}\,, \qquad  b_t^P = \begin{bmatrix}
    v_t\\
    -\gamma v_t - \nabla U(x_{t})
    \end{bmatrix}\,, \qquad 
    b_t^Q = \begin{bmatrix}
    v_t \\
    -\gamma v_t - \nabla U(x_{\floor{t/h} h})
    \end{bmatrix}\,.
\end{align*}
In this case, $\norm{\sigma^\dagger\,(b_t^P - b_t^Q)} \equiv \frac{1}{\sqrt{2\gamma}}\, \norm{\nabla U(x_{\floor{t/h}h})-\nabla U(x_t)}$.

We also adapt the following Lemmas without proof from~\cite{chewi2021analysis}.

\begin{lemma}[{Change of Measure, from \cite[Lemma~21]{chewi2021analysis}}]\label{lem:change_of_measure}
    Let $\mu$, $\nu$ be probability measures and let $E$ be any event.
    Then,
    \begin{align*}
        \mu(E)
        &\le \nu(E) + \sqrt{\chi_2(\mu \mmid \nu) \, \nu(E)}\,.
    \end{align*}
    In particular, if $\mu$ and $\nu$ are probability measures on $\R^d$ and
    \begin{align*}
        \nu\{\norm \cdot \ge R_0 + \eta\} \le C\exp(-c\eta^2) \qquad\text{for all}~\eta \ge 0\,,
    \end{align*}
    where $C \ge 1$, then
    \begin{align*}
        \mu\Bigl\{\norm \cdot \ge R_0 + \sqrt{\frac{1}{c} \, \eu R_2(\mu\mmid \nu)} + \eta\Bigr\} \le 2C\exp\bigl(-\frac{c\eta^2}{2}\bigr) \qquad\text{for all}~\eta \ge 0\,.
    \end{align*}
\end{lemma}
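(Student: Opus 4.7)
The plan is to establish the general change-of-measure inequality via a direct Cauchy--Schwarz argument and then to specialize it to the tail event by a careful choice of the shift. For the first claim, I would write
\[
\mu(E) - \nu(E) = \int \one_E \, \Bigl( \frac{\D \mu}{\D \nu} - 1 \Bigr) \, \D \nu
\]
(assuming $\mu \ll \nu$, else the right-hand side of the bound is $+\infty$ and the claim is vacuous) and apply Cauchy--Schwarz in $L^2(\nu)$. The first factor yields $\sqrt{\nu(E)}$ and the second factor yields $\sqrt{\int {(\D \mu/\D \nu - 1)}^2 \, \D \nu} = \sqrt{\chi_2(\mu \mmid \nu)}$, which is exactly the stated inequality.

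For the second claim, the strategy is to apply the inequality just established to the tail event $E = \{\norm{\cdot} \ge R_0 + r + \eta\}$, with the specific choice $r \deq \sqrt{\eu R_2(\mu \mmid \nu)/c}$ designed to exactly absorb the $\chi_2$-blowup. The hypothesis on $\nu$ gives
\[
\nu(E) \le C \exp(-c{(r+\eta)}^2) \le C \exp(-c r^2) \exp(-c \eta^2),
\]
using ${(r+\eta)}^2 \ge r^2 + \eta^2$ for $r,\eta \ge 0$. The key identity $cr^2 = \eu R_2(\mu \mmid \nu)$, together with the standard relation $\chi_2(\mu \mmid \nu) = \exp(\eu R_2(\mu \mmid \nu)) - 1 \le \exp(\eu R_2(\mu \mmid \nu))$, gives
\[
\sqrt{\chi_2(\mu \mmid \nu) \, \nu(E)} \le \sqrt{C} \, \exp(-c\eta^2/2).
\]
Combined with the direct tail bound $\nu(E) \le C\exp(-c\eta^2)$, the change-of-measure inequality yields $\mu(E) \le C\exp(-c\eta^2) + \sqrt{C}\exp(-c\eta^2/2)$, which is at most $2C\exp(-c\eta^2/2)$ since $C \ge 1$ (so $\sqrt{C} \le C$ and the first term is dominated by the second).

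The only delicate point is the tuning of $r$: one must choose it so that the sub-Gaussian decay of $\nu$ at the shifted radius exactly cancels the $\sqrt{\chi_2}$ factor, leaving genuine Gaussian decay in $\eta$. The factor of two lost in the rate (from $c\eta^2$ to $c\eta^2/2$) is the inevitable price of the inequality ${(r+\eta)}^2 \ge r^2+\eta^2$, since one needs the cross term $2cr\eta$ to be split between the two exponentials. Since the statement is adapted directly from \cite[Lemma 21]{chewi2021analysis}, I do not expect any further conceptual obstacle beyond verifying these manipulations carefully.
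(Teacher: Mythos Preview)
Your proof is correct; the paper itself does not prove this lemma but simply cites \cite[Lemma~21]{chewi2021analysis}, and your argument is exactly the standard Cauchy--Schwarz derivation used there. One small remark on your commentary: the halving of the exponent (from $c\eta^2$ to $c\eta^2/2$) arises from the square root $\sqrt{\nu(E)}$ in the change-of-measure bound, not from splitting the cross term $2cr\eta$, which you in fact discard entirely via $(r+\eta)^2 \ge r^2+\eta^2$---but this does not affect the validity of the argument.
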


\begin{lemma}\label{lem:brownian_mgf}
    Let ${(B_t)}_{t\ge 0}$ be a standard Brownian motion in $\R^d$.
    Then, if $\lambda \ge 0$ and $h \le 1/(4\lambda)$,
    \begin{align*}
        \E \exp\bigl(\lambda \sup_{t\in [0,h]}{\norm{B_t}^2}\bigr)
        &\le \exp(6dh\lambda)\,.
    \end{align*}
    In particular, for all $\eta \ge 0$,
    \begin{align*}
        \Pr\bigl\{\sup_{t \in [0,h]}{\norm{B_t}} \ge \eta\bigr\}
        &\le 3\exp\bigl(-\frac{\eta^2}{6dh}\bigr)\,.
    \end{align*}
\end{lemma}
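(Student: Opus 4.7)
The plan is to reduce the $d$-dimensional statement to a one-dimensional MGF estimate via independence, prove the 1D bound using the reflection principle, and then deduce the tail bound by a standard Chernoff argument.

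\textbf{Reduction to one dimension.} Write $B_t = (B_t^{(1)}, \dots, B_t^{(d)})$ where $B^{(i)}$ are independent standard 1D Brownian motions. Then
\begin{equation*}
\sup_{t \in [0,h]} \|B_t\|^2 = \sup_{t \in [0,h]} \sum_{i=1}^d {(B_t^{(i)})}^2 \leq \sum_{i=1}^d \sup_{t \in [0,h]} {(B_t^{(i)})}^2,
\end{equation*}
and by independence of the coordinates the MGF factorizes:
\begin{equation*}
\E \exp\bigl(\lambda \sup_{t \in [0,h]} \|B_t\|^2\bigr) \leq \prod_{i=1}^d \E \exp\bigl(\lambda \sup_{t \in [0,h]} {(B_t^{(i)})}^2\bigr).
\end{equation*}
So it suffices to bound a single 1D factor and raise the result to the $d$-th power.

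\textbf{One-dimensional tail and MGF bound.} Let $X = \sup_{t \in [0,h]} |B_t|$ for a 1D standard Brownian motion. Using $\sup_t |B_t| \le \max(\sup_t B_t, \sup_t (-B_t))$ together with the reflection principle $\sup_{t \le h} B_t \stackrel{d}{=} |B_h|$ and the standard Gaussian tail bound, one obtains $\Pr(X \geq a) \leq C_0 \exp(-a^2/(2h))$ for an absolute constant $C_0$ and all $a \geq 0$. Converting to an MGF via integration by parts,
\begin{equation*}
\E \exp(\lambda X^2) = 1 + \int_0^\infty 2\lambda x \exp(\lambda x^2)\, \Pr(X \geq x)\, dx \leq 1 + 2 C_0 \lambda \int_0^\infty 2x \exp\bigl(-(1/(2h) - \lambda)\,x^2\bigr) dx.
\end{equation*}
Under the hypothesis $h \leq 1/(4\lambda)$ we have $1/(2h) - \lambda \geq 1/(4h)$, so the integral evaluates to $O(h)$ and we get $\E \exp(\lambda X^2) \leq 1 + C'\lambda h$ for an absolute constant $C'$. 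Applying $1 + u \le e^u$ and multiplying $d$ independent factors gives $\E \exp(\lambda \sup_t \|B_t\|^2) \le \exp(C' d h \lambda)$; a careful bookkeeping of the constants (or, equivalently, applying Doob's maximal inequality to the exponential submartingale directly) yields the stated constant $6$.

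\textbf{From MGF to tail bound.} For the second assertion, use Markov's inequality on $\exp(\lambda \sup_t \|B_t\|^2)$ with $\lambda = 1/(6dh)$, which satisfies the MGF hypothesis $h \leq 1/(4\lambda) = 3dh/2$:
\begin{equation*}
\Pr\bigl\{\sup_{t \in [0,h]} \|B_t\| \geq \eta\bigr\} \leq \exp(-\lambda \eta^2)\, \E \exp\bigl(\lambda \sup_t \|B_t\|^2\bigr) \leq \exp(-\lambda \eta^2 + 6dh\lambda) = e\cdot\exp\bigl(-\tfrac{\eta^2}{6dh}\bigr),
\end{equation*}
and $e < 3$ closes the bound.

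\textbf{Main obstacle.} The conceptual structure is entirely straightforward; the only mildly delicate point is pinning down the explicit absolute constant $6$, since a naive combination of reflection principle plus union bound tends to yield a somewhat larger prefactor. Tightening this requires either a careful numerical accounting of the integration-by-parts step, or applying Doob's $L^p$ maximal inequality in a way that avoids the union bound on $\pm B_t$. Since all subsequent applications of the lemma only use the bound up to absolute constants, this is a purely cosmetic issue.
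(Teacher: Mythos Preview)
The paper does not actually prove this lemma: it is quoted verbatim (``without proof'') from \cite{chewi2021analysis} as one of several imported technical lemmas, so there is no paper proof to compare against. Your approach---coordinate-wise reduction, reflection principle for the 1D running maximum, integration of the tail to get the MGF, and Chernoff for the final tail bound---is the standard one and is correct. Your own diagnosis of the only loose end is accurate: the na\"ive union bound on $\pm B_t$ combined with the Gaussian tail gives a constant somewhat larger than $6$ in the MGF, but since every downstream use in the paper is up to absolute constants this is indeed purely cosmetic. If you want the sharper constant, note that in one dimension $\exp(\lambda\sup_{t\le h}B_t^2)=\sup_{t\le h}\exp(\lambda B_t^2)$ pathwise, so Doob's maximal inequality applied to the submartingale $t\mapsto\exp(\lambda B_t^2)$ (integrable for $\lambda<1/(2h)$) avoids the union-bound loss; alternatively, a slightly more careful splitting of the integration-by-parts integral near zero and at infinity recovers the stated constant.
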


\begin{lemma}[{\cite[Lemma~14]{ganeshtalwar2020renyi}}]
\label{lem:uncondition}
Let $Y>0$ be a random variable.
Assume that for all $0 < \delta < 1/2$ there exists an event $\mathcal{E}_\delta$ with probability at least $1-\delta$ such that
\begin{align*}
    \E[Y^2 \mid \mathcal{E}_\delta] \leq \frac{v}{\delta^\xi}
\end{align*}
for some $\xi < 1$.
Then, $\E Y \leq 4 \sqrt{v}$.
\end{lemma}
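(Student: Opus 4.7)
\begin{proof-idea}
The plan is to convert the conditional second‐moment bound into an unconditional tail bound on $Y$, and then integrate this tail bound using the layer cake formula $\E Y = \int_0^\infty \P(Y > t)\,\D t$. The parameter $\delta$ will be optimized as a function of the tail level $t$.

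First, I would fix a tail level $t > 0$. For any $\delta \in (0,1/2)$, combining Markov's inequality applied to $Y^2$ conditional on $\mathcal{E}_\delta$ with the trivial bound $\P(\mathcal{E}_\delta^c) \le \delta$ yields
\begin{align*}
    \P(Y > t) \le \P(Y > t \mid \mathcal{E}_\delta) + \P(\mathcal{E}_\delta^c) \le \frac{\E[Y^2 \mid \mathcal{E}_\delta]}{t^2} + \delta \le \frac{v}{\delta^{\xi}\, t^2} + \delta.
\end{align*}
Balancing the two terms, I would choose $\delta(t) = (v/t^2)^{1/(1+\xi)}$, yielding $\P(Y > t) \le 2\, (v/t^2)^{1/(1+\xi)}$, provided $\delta(t) < 1/2$; this is equivalent to $t$ exceeding a threshold $t_0 \asymp \sqrt v$. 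For $t \le t_0$ I would simply use the trivial bound $\P(Y > t) \le 1$.

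Next, I would split $\E Y = \int_0^{t_0} \P(Y > t)\,\D t + \int_{t_0}^\infty \P(Y > t)\,\D t$. The first integral contributes $t_0 \asymp \sqrt v$. For the second, setting $\alpha \deq 2/(1+\xi)$, I would note that the hypothesis $\xi < 1$ gives $\alpha > 1$, so the tail $t^{-\alpha}$ is integrable at infinity. A direct computation gives
\begin{align*}
    \int_{t_0}^\infty 2\,(v/t^2)^{1/(1+\xi)}\,\D t = \frac{2\,(1+\xi)}{1-\xi}\, v^{1/(1+\xi)}\, t_0^{(\xi-1)/(1+\xi)},
\end{align*}
and substituting $t_0 \asymp \sqrt v$ collapses the powers of $v$ to give exactly $\sqrt v$ times a constant depending only on $\xi$. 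Summing the two contributions yields $\E Y \le C(\xi)\sqrt v$, and tracking constants carefully for the relevant regime of $\xi$ produces the claimed bound $\E Y \le 4\sqrt v$.

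The only subtlety is the matching at the threshold $t_0$: one must ensure the optimized $\delta(t)$ is actually admissible (i.e.\ below $1/2$) on the tail portion of the integral, which is exactly why $t_0$ is chosen of order $\sqrt v$ with a small numerical prefactor rather than $\sqrt v$ directly. Beyond this bookkeeping the argument is routine, and the crucial qualitative input is that $\xi < 1$ strictly, since $\xi = 1$ would make the tail $t^{-1}$ and destroy integrability.
\end{proof-idea}
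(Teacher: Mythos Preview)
The paper does not supply its own proof of this lemma; it simply cites~\cite[Lemma~14]{ganeshtalwar2020renyi}. So there is no in-paper argument to compare against.

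Your approach is the standard one and is correct in its structure: decompose $\P(Y>t)$ via the event $\mathcal{E}_\delta$, apply conditional Markov, optimize $\delta$ as a function of $t$, and integrate via layer cake. The key observation that $\xi<1$ forces the tail exponent $2/(1+\xi)>1$, hence integrability, is exactly right.

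One point worth flagging: your argument produces $\E Y \le C(\xi)\sqrt{v}$ with
\[
C(\xi) \;\asymp\; 2^{(1+\xi)/2} + \frac{1+\xi}{1-\xi}\,,
\]
which blows up as $\xi\to 1^-$. So the universal constant $4$ in the stated conclusion is not actually achievable for all $\xi<1$ by this route (nor, as far as I can see, by any route, since the hypothesis degenerates at $\xi=1$). Your hedge ``for the relevant regime of $\xi$'' is honest but vague; it would be cleaner to state the conclusion as $\E Y \le C(\xi)\sqrt{v}$ and note that in the paper's applications one always arranges $\xi$ to be bounded away from $1$ (e.g.\ $\xi\lesssim \beta T/\gamma$ with $\beta$ chosen small), so the constant is harmless. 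This is a cosmetic imprecision in the lemma statement rather than a defect in your argument.
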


\begin{lemma}[Matrix Gr\"onwall Inequality]\label{lem:matrix_gronwall}
    Let $x: \R_+ \to \R^d$, and $c \in \R^d$, $A \in \R^{d \times d}$, where $A$ has non-negative entries. Suppose that the following inequality is satisfied componentwise:
    \begin{align}
        x(t) \leq c + \int_0^t Ax(s) \, \D s\,, \qquad\text{for all}~t\ge 0\,.
    \end{align}
    Then, the following inequality holds, where $I_d \in \R^{d \times d}$ is the $d$-dimensional identity matrix:
    \begin{align}
        x(t) \leq (AA^{\dagger}\, e^{At} - AA^{\dagger} + I_d)\, c\,,
    \end{align}
    where $A^{\dagger}$ is the Moore--Penrose pseudo-inverse of $A$ (when $A$ is invertible, this is equivalent to the standard inverse).
\end{lemma}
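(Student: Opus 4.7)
The plan is to reduce the matrix integral inequality to a linear ODE comparison via a Duhamel-type formula, exploiting the entrywise non-negativity of $A$ (and hence of $e^{At}$). First I would set $y(t) := \int_0^t A x(s)\,\D s$, so that $y(0) = 0$ and the hypothesis becomes $x(t) \le c + y(t)$. Since $A$ has non-negative entries, the map $z \mapsto A z$ preserves the componentwise partial order on $\R^d$; applying $A$ to both sides and differentiating gives the componentwise differential inequality
\begin{align*}
    y'(t) = A x(t) \le A c + A y(t)\,.
\end{align*}

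Next I would compare $y$ against the solution $w$ of the linear ODE $w'(t) = A w(t) + A c$ with $w(0) = 0$ and show $y(t) \le w(t)$ componentwise. Setting $\delta := w - y$, one has $\delta(0) = 0$ and $r(t) := \delta'(t) - A \delta(t) \ge 0$ componentwise. Variation of parameters yields
\begin{align*}
    \delta(t) = e^{At}\, \delta(0) + \int_0^t e^{A(t-s)}\, r(s)\,\D s = \int_0^t e^{A(t-s)}\, r(s)\,\D s\,.
\end{align*}
Since $A$ has non-negative entries, so does $e^{A(t-s)}$ for $t-s \ge 0$ (either viewing it as the convergent power series $\sum_k {(A(t-s))}^k/k!$ with non-negative terms, or as $\lim_n {(I_d + A(t-s)/n)}^n$). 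Combined with $r(s) \ge 0$, this gives $\delta(t) \ge 0$, establishing the comparison. To evaluate $w$ explicitly, I would exploit that $A$ commutes with $e^{As}$ and $\frac{\D}{\D s} e^{A(t-s)} = -A e^{A(t-s)}$ to compute
\begin{align*}
    w(t) = \int_0^t e^{A(t-s)} A c\,\D s = \bigl[-e^{A(t-s)}\bigr]_{s=0}^{s=t}\, c = (e^{At} - I_d)\, c\,.
\end{align*}
Combining gives $x(t) \le c + y(t) \le c + (e^{At} - I_d)\, c$. Finally, since $e^{At} - I_d = A \int_0^t e^{As}\,\D s$, the vector $(e^{At} - I_d)\, c$ automatically lies in $\operatorname{range}(A)$, so $A A^\dagger\, (e^{At} - I_d)\, c = (e^{At} - I_d)\, c$, which rearranges to the claimed bound $x(t) \le (A A^\dagger\, e^{At} - A A^\dagger + I_d)\, c$.

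The hard part is the comparison step. In the scalar case one would multiply $y' \le a y + a c$ by $e^{-at}$ and integrate directly; in the matrix case $e^{-At}$ generically has negative entries, so it does not preserve the componentwise partial order and the one-line scalar proof breaks. The argument above sidesteps this by routing the comparison through a Duhamel integral that involves only $e^{A(t-s)}$ with $t - s \ge 0$, which is entrywise non-negative precisely because $A$ is. Once this monotonicity is secured, the remaining steps --- computing $w$ via commutativity of $A$ with $e^{As}$, and observing that $(e^{At} - I_d)\, c$ already lies in $\operatorname{range}(A)$ so that the projection $A A^\dagger$ can be inserted for free --- are routine.
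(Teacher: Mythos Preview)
Your argument is correct and self-contained, whereas the paper does not prove the lemma directly: it simply cites it as a special case of the main theorem of Chandra and Davis (1976) on linear matrix integral inequalities. Your route---setting $y(t)=\int_0^t Ax(s)\,\D s$, comparing $y$ to the solution $w$ of $w'=Aw+Ac$ via a Duhamel integral, and using that entrywise non-negativity of $A$ forces $e^{A\tau}$ to be entrywise non-negative for $\tau\ge 0$---is exactly the kind of elementary comparison the cited reference packages, but made explicit for this setting. One dividend of your computation is that it exposes a simplification the paper's statement obscures: since $e^{At}-I_d = A\sum_{k\ge 1} A^{k-1}t^k/k!$ has column space contained in $\operatorname{range}(A)$, one has $AA^\dagger(e^{At}-I_d)=e^{At}-I_d$ as matrices, and hence $AA^\dagger e^{At}-AA^\dagger+I_d = e^{At}$ identically. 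So the conclusion is simply $x(t)\le e^{At}c$; the pseudo-inverse formulation is presumably inherited from the more general context of the cited result. The paper's later applications of the lemma (splitting $c=c_1+c_2$ with $c_1\in\operatorname{range}(A)$ and computing the two pieces separately) could therefore also be read off directly from $e^{At}c$, as a sanity check confirms.
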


\begin{proof}
    This is a special case of~\cite[Main Theorem]{chandra1976linear}.
\end{proof}

\subsection{Movement Bound for ULMC}

We next prove a movement bound for the continuous-time Langevin diffusion. The following lemma is a standard fact about the concentration of the norm of a Gaussian vector~\cite[see, e.g.,][Theorem 5.5]{boucheronlugosimassart2013concentration}.

\begin{lemma}[Concentration of the Norm] \label{lem:concentration_target_lsi}
    The following concentration holds: for all $\eta \ge 0$,
    \begin{align*}
        &\rho(\norm{\cdot} \geq \sqrt{d} + \eta) \leq \exp\Bigl(-\frac{\eta^2}{2}\Bigr)\,.
    \end{align*}
\end{lemma}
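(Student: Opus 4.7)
The plan is to deduce this bound from the classical Gaussian concentration inequality for Lipschitz functions combined with a simple bound on the expected norm. The key observation is that the function $f(x) \coloneqq \|x\|$ is $1$-Lipschitz on $\mathbb{R}^d$, as an immediate consequence of the reverse triangle inequality $|\|x\|-\|y\|| \le \|x-y\|$.

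First, I would invoke Gaussian Lipschitz concentration, which is a standard consequence of the log-Sobolev inequality for the standard Gaussian measure $\rho$ (via Herbst's argument, or equivalently the Borell--Tsirelson--Ibragimov--Sudakov inequality). This yields, for any $1$-Lipschitz $f$ and $X\sim\rho$,
\begin{align*}
    \rho\bigl( f(X) \ge \mathbb{E}_\rho f + \eta \bigr) \le \exp\bigl(-\eta^2/2\bigr), \qquad \eta \ge 0.
\end{align*}
Specialized to $f(x) = \|x\|$, this gives $\rho(\|\cdot\| \ge \mathbb{E}_\rho\|\cdot\| + \eta) \le \exp(-\eta^2/2)$.

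Second, I would bound the mean norm using Jensen's inequality: $\mathbb{E}_\rho\|\cdot\| \le (\mathbb{E}_\rho\|\cdot\|^2)^{1/2} = \sqrt{d}$, where the final equality uses that $X \sim \mathcal{N}(0, I_d)$ has $\mathbb{E}\|X\|^2 = d$. Substituting this bound on the mean into the concentration inequality above immediately yields the stated tail bound.

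I do not anticipate any genuine obstacle here: the result is a textbook consequence of the two cited facts, and the constants match exactly because the Lipschitz constant of $\|\cdot\|$ is $1$. The only subtlety worth noting is that one must use the sharp mean bound $\mathbb{E}\|\cdot\| \le \sqrt{d}$ (rather than, say, a dimension-dependent absolute constant times $\sqrt{d}$) in order to obtain the clean centering at $\sqrt{d}$ that appears in the statement.
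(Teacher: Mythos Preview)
Your proposal is correct and is exactly the standard argument underlying the cited reference; the paper itself does not spell out a proof but simply cites~\cite[Theorem 5.5]{boucheronlugosimassart2013concentration}, which is precisely Gaussian Lipschitz concentration applied to $f(x)=\norm{x}$ together with the Jensen bound $\E_\rho\norm{\cdot}\le\sqrt{d}$.
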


Note that $\norm{v_t - v_0}$ is of size $\mc O(\sqrt{dt})$, due to the Brownian motion component of the momentum variable $v$; this is the same order as the size of the increment of the overdamped Langevin diffusion. However, if we consider the increment in the $x$-coordinate only, we obtain the following bound. 

\begin{lemma}\label{lem:better_stoc_calc}
    Let ${(x_t, v_t)}_{t\ge 0}$ denote the continuous-time underdamped Langevin diffusion started at $(x_0, v_0)$, and assume that the gradient $\nabla U$ of the potential satisfies $\nabla U(0) = 0$ and is H\"older continuous (satisfies~\eqref{eq:holder}).
    Also, assume that $h \lesssim L^{-1/2} \wedge \gamma^{-1}$ and $0 \le \lambda \lesssim \frac{1}{\gamma^s d^s h^{3s}}$.
    Then,
    \begin{align*}
        \log \E\exp\bigl(\lambda\sup_{t\in [0,h]}{\norm{x_t - x_0}}^{2s}\bigr)
        \lesssim \bigl(L^{2s} h^{4s} \,(1+\norm{x_0}^{2s^2}) + h^{2s}\, \norm{v_{0}}^{2s} + \gamma^s d^s h^{3s}\bigr)\, \lambda\,.
    \end{align*}
\end{lemma}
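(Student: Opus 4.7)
The starting point is the identity $x_t - x_0 = \int_0^t v_s \, \D s$, which yields the deterministic pathwise bound $\sup_{t\in[0,h]}\|x_t - x_0\| \le h\, V_h$ where $V_h \defeq \sup_{s\in[0,h]}\|v_s\|$. This reduces the problem to an MGF bound for $h^{2s} V_h^{2s}$. The factor of $h$ is the key structural gain over LMC: whereas the position increment in LMC directly inherits a Brownian term of order $\sqrt{dh}$, the position increment in ULMC picks up an extra power of $h$ through this integration, which is the ultimate source of the improved discretization estimate.

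For the velocity bound, variation-of-constants applied to the momentum SDE yields
\begin{equation*}
v_s = \exp(-\gamma s)\, v_0 - \int_0^s \exp(-\gamma(s-u))\, \nabla U(x_u)\, \D u + M_s, \quad M_s \defeq \sqrt{2\gamma}\int_0^s \exp(-\gamma(s-u))\, \D B_u.
\end{equation*}
Taking norms, invoking $\nabla U(0) = 0$ together with the H\"older estimate $\|\nabla U(x_u)\| \le L\,\|x_0\|^s + L\,\|x_u - x_0\|^s$, and substituting the pathwise bound from the first step gives the self-referential inequality $V_h \le \|v_0\| + L\,h\,\|x_0\|^s + L\,h^{1+s}\, V_h^s + \sup_{s\in[0,h]}\|M_s\|$. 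Under $h \lesssim L^{-1/2}\wedge\gamma^{-1}$ I close the self-reference by splitting into the regimes $V_h \le 1$ (trivial) and $V_h > 1$ (where $V_h^s \le V_h$ permits linear absorption of the cross term), yielding $V_h \lesssim 1 + \|v_0\| + L\,h\,\|x_0\|^s + \sup_{s\in[0,h]}\|M_s\|$. Raising to the $2s$-th power via the $c_r$-inequality and multiplying by $h^{2s}$ recovers the three deterministic terms in the claimed bound once the baseline constant is absorbed into $L^{2s}h^{4s}(1+\|x_0\|^{2s^2})$; the precise exponent $2s^2$ on $\|x_0\|$ arises from the sharp H\"older estimate $\|\nabla U(x_0)\|^{2s} \le L^{2s}\,\|x_0\|^{2s^2}$ rather than from a cruder linearization of $\nabla U$.

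For the random contribution, I bound the MGF of $\sup_{s\in[0,h]}\|M_s\|^{2s}$. Writing $M_s = \sqrt{2\gamma}\exp(-\gamma s)\int_0^s \exp(\gamma u)\, \D B_u$ and applying Doob's maximal inequality together with Lemma~\ref{lem:brownian_mgf} to the underlying exponentially-weighted martingale yields the sub-Gaussian tail bound $\mathbb{P}\bigl\{\sup_{s\in[0,h]}\|M_s\| \ge \eta\bigr\} \lesssim \exp(-c\,\eta^2/(\gamma dh))$ valid under $\gamma h \lesssim 1$. Translating this into an MGF via the layer-cake formula and exploiting the sublinearity of $u \mapsto u^{2s}$ for $s \le 1$ gives $\log\E\exp\bigl(\tilde\lambda \sup_{s\in[0,h]}\|M_s\|^{2s}\bigr) \lesssim (\gamma dh)^s\,\tilde\lambda$ for $\tilde\lambda \lesssim (\gamma dh)^{-s}$. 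Setting $\tilde\lambda = h^{2s}\,\lambda$ converts the constraint into $\lambda \lesssim \gamma^{-s} d^{-s} h^{-3s}$, matching the hypothesis, and produces the $\gamma^s d^s h^{3s}\,\lambda$ contribution to the final log-MGF.

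The main technical obstacle is closing the H\"older self-reference cleanly when $s<1$: since $V_h^s$ is sub-linear, a naive linearization such as $V_h^s \le s\,V_h + (1-s)$ introduces constants that can degrade with $L$, and a careful case split or bootstrap is required to keep all error terms absorbed within $L^{2s}h^{4s}(1+\|x_0\|^{2s^2})$ rather than spawning stray powers of $L$. Secondary care is needed when extending Lemma~\ref{lem:brownian_mgf} from standard Brownian motion to the Ornstein--Uhlenbeck-type martingale $M_s$, and in verifying that the sub-Gaussian MGF of $\sup_{s\in[0,h]}\|M_s\|^{2s}$ retains an exponential-in-$\tilde\lambda$ form throughout the stated range.
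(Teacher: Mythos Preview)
Your route is different from the paper's: the paper writes a coupled system of integral inequalities for $\|x_t-x_0\|$ and $\|v_t-v_0\|$, linearizes the H\"older term via $\|y\|^s \le 1+\|y\|$, and invokes the matrix Gr\"onwall inequality (Lemma~\ref{lem:matrix_gronwall}) to close the system, whereas you collapse everything to a scalar inequality for $V_h=\sup_{s\le h}\|v_s\|$ via variation of constants. Your approach is more elementary and can be made to work, but the closure you actually give has a gap.

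In the regime $V_h>1$ you replace $V_h^s$ by $V_h$ and try to absorb $Lh^{1+s}V_h$ into the left side; this requires $Lh^{1+s}\le\tfrac12$, but under $h\lesssim L^{-1/2}$ one only has $Lh^{1+s}\asymp L^{(1-s)/2}$, which is large for $s<1$. A second, related issue: the residual ``$1$'' from the case $V_h\le 1$ contributes $h^{2s}$ to $\sup\|x_t-x_0\|^{2s}$, and your claim that this is absorbed by $L^{2s}h^{4s}$ would require $Lh\gtrsim 1$, which is not part of the hypotheses. Both problems disappear if you apply the paper's linearization one step earlier in your own argument: bound $\|x_u-x_0\|^s\le 1+\|x_u-x_0\|\le 1+hV_h$ \emph{before} forming the $V_h$ inequality, so that it reads
\[
V_h \le \|v_0\| + Lh\,\|x_0\|^s + Lh + Lh^2\,V_h + \sup_s\|M_s\|\,,
\]
with recursive coefficient $Lh^2\lesssim 1$ and no bare constant; the additive $Lh$ is exactly what produces the $L^{2s}h^{4s}$ term after multiplying by $h$ and raising to power $2s$. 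With this repair the rest of your outline is correct: the OU-type martingale $M_s$ reduces to $\sqrt{\gamma}\,\sup_{t\le h}\|B_t\|$ under $\gamma h\lesssim 1$ (via integration by parts or time change), after which Lemma~\ref{lem:brownian_mgf} applies directly.
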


\begin{proof}
    For the interpolant times, we will use Gr\"onwall's matrix inequality (Lemma~\ref{lem:matrix_gronwall}), with the following equation for $x$: 
    \begin{align*}
        \norm{x_{t} - x_{0}}
        &\le \Bigl\lVert \int_0^t v_\tau \, \D \tau \Bigr\rVert
        \le h\,\norm{v_0} + \Bigl\lVert \int_0^t (v_\tau - v_0) \, \D \tau\Bigr\rVert \\
        &\leq h\,\norm{v_0} + \Bigl\lVert\int_0^t \int_0^\tau \gamma v_{\tau'} \, \D \tau' \, \D \tau \Bigr\rVert + \Bigl\lVert \int_0^t \int_0^\tau \nabla U(x_{\tau'})\, \D \tau' \, \D \tau \Bigr\rVert \\
        &\qquad{} + \Bigl\lVert \int_0^t \int_0^\tau \sqrt{2\gamma} \, \D B_{\tau'} \, \D \tau\Bigr\rVert \\
        &\le h\,\norm{v_0} + \gamma h\,\Bigl(h\,\norm{v_0} + \int_0^t \norm{v_{\tau} -v_{0}} \, \D \tau\Bigr) + Lh^2  \\
        &\qquad{} + Lh\,\Bigl(h\,\norm{x_{0}}^s+ \int_0^t \norm{x_{\tau} -x_{0}} \, \D \tau \Bigr) + \sqrt{2\gamma}\, h \sup_{t \in [0,h]}{\norm{B_{t}}}\,.
    \end{align*}
    Here we use the H\"older property of $\nabla U$ along with $\norm{x}^s \leq 1+\norm{x}$. 
    Likewise for $v$:
    \begin{align*}
        \norm{v_{t} - v_{0}} &\leq \Bigl\lVert \int_0^t \gamma v_\tau \, \D \tau\Bigr\rVert + \Bigl\lVert\int_0^t \nabla U(x_\tau)\, \D \tau \Bigr\rVert +  \Bigl\lVert\int_0^t \sqrt{2\gamma} \, \D B_\tau\Bigr\rVert \\
        &\leq \gamma\,\Bigl(h\, \norm{v_0} + \int_0^t \norm{v_\tau -v_0} \, \D \tau\Bigr) + Lh + L\,\Bigl(h\,\norm{x_0}^s + \int_0^t \norm{x_{\tau} -x_{0}} \, \D \tau \Bigr) \\
        &\qquad{} + \sqrt{2\gamma} \sup_{t \in [0,h]} {\norm{B_{t}}}\,.
    \end{align*}
    
    Consequently, we can use the matrix form of Gr\"onwall's inequality (Lemma \ref{lem:matrix_gronwall}). While applying that Lemma, let $c = c_1 + c_2$ with $c_1, c_2$ to be given. First, for $c_1$:
    \begin{align*}
        A = \begin{bmatrix}
            Lh & \gamma h  \\
            L & \gamma
        \end{bmatrix}\,, \qquad c_1 = \begin{bmatrix}
            Lh^2\, \norm{x_0}^s + \gamma h^2\, \norm{v_0} + Lh^2+ \sqrt{2\gamma}\, h\,\sup_{t \in [0, h]} \norm{B_{t}}  \\
            Lh\, \norm{x_0}^s + \gamma h\, \norm{v_0} + Lh +  \sqrt{2\gamma} \sup_{t \in [0, h]}\norm{B_{t}}
        \end{bmatrix}\,.
    \end{align*}
    Noting that $c_1$ lies in the image space of $A$ so that $AA^\dagger c_1= c_1$, and similarly observing that $\exp(At) \,c_1$ belongs to the image space of $A$ (using the power series representation of the matrix exponential), we obtain for this first component:
 \begin{align*}
        &\sup_{t \in [0, h]}{\norm{x_{0} - x_{t}}} \\
        &\qquad \leq h \exp\bigl((Lh+\gamma)\,h\bigr)\, \bigl(\gamma h\, \norm{v_{0}} + Lh\, \norm{x_{0}}^s + Lh + \sqrt{2\gamma} \sup_{t \in [0,h]}{\norm{B_{t}}}\bigr) + c_2~\text{term} \\
        &\qquad \leq 2h \,\bigl(\gamma h\, \norm{v_{0}} + Lh\, \norm{x_{0}}^s + Lh + \sqrt{2\gamma} \sup_{t \in [0,h]}{\norm{B_{t}}}\bigr) + c_2~\text{term}\,,
    \end{align*}
    where in the second line we take $h \lesssim \frac{1}{\sqrt L+\gamma}$.
    Now, taking
    \begin{align*}
        c_2 = \begin{bmatrix}
            h\,\norm{v_{0}}  \\
            0 
            \end{bmatrix}\,,
    \end{align*}
    we find the following (where $\mathbf{v}_{(1)}$ denotes the first component of a vector $\mathbf{v}$):
    \begin{align*}
        ((AA^\dagger\, (e^{Ah} - I_{2d}) + I_{2d})\, c_2)_{(1)} = \frac{Lhe^{(Lh+\gamma)\,h} + \gamma}{Lh + \gamma}\,h\,\norm{v_{0}}.
    \end{align*}
    Finally, for $h \lesssim \frac{1}{\sqrt L + \gamma}$, this can be bounded by $2h\,\norm{v_{0}}$. Using Lemma~\ref{lem:brownian_mgf} and plugging this into the expression completes the proof.
\end{proof}

\subsection{Sub-Gaussianity of the Iterates}\label{scn:subgaussianty}

Similarly to~\cite{chewi2021analysis}, we introduce a modified potential in order to prove sub-Gaussianity of the iterates of ULMC\@.
Firstly, we consider a modified distribution in the $x$-coordinate, with parameter $a \defeq (\beta, S)$ for some $S, \beta \geq 0$: 
\begin{align}\label{eq:modified_potential}
\pi^{(a)} \propto \exp(-U^{(a)})\,, \qquad U^{(a)}(x) \defeq U(x) + \frac{\beta}{2}\, (\norm{x} - S)^2_+\,.
\end{align}

The modified potential satisfies the following properties.

\begin{lemma}[{Properties of the Modified Potential,~\cite[Lemma~23]{chewi2021analysis}}]
    \label{lem:modified} 
    Consider $\pi^{(a)}$ and $U^{(a)}$ defined as in~\eqref{eq:modified_potential}. Assume that $\nabla U(0) = 0$ and that $\nabla U$ satisfies~\eqref{eq:holder}.
    Then, the following assertions hold.
    \begin{enumerate}
        \item (sub-Gaussian tail bound) Assume that $S$ is chosen so that $\pi(B(0,S)) \ge 1/2$.
         Then, for all $\eta \ge 0$,
         \begin{align*}
             \pi^{(a)}\{\norm \cdot \ge S + \eta\}
            \le 2\exp\bigl(-\frac{\beta \eta^2}{2}\bigr)\,.
         \end{align*}
         \item (gradient growth) The gradient $\nabla U^{(a)}$ satisfies
         \begin{align*}
             \norm{\nabla U^{(a)}(x)}
             &\le L + (\beta+L) \, \norm x\,.
         \end{align*}
    \end{enumerate}
\end{lemma}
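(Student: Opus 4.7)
The plan is to address the two assertions independently, since each is essentially a direct consequence of the definition of $U^{(a)}$.

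For the sub-Gaussian tail bound, I would write the probability of interest as a ratio
\begin{align*}
    \pi^{(a)}\{\norm\cdot \ge S+\eta\}
    = \frac{\int_{\{\norm x \ge S+\eta\}} \exp(-U^{(a)}(x))\dx}{\int \exp(-U^{(a)}(x))\dx}\,,
\end{align*}
and bound the numerator and denominator separately. For the numerator, on the region $\{\norm x \ge S+\eta\}$ the penalty satisfies $\frac{\beta}{2}\,(\norm x - S)^2 \ge \frac{\beta\eta^2}{2}$, so I can factor $\exp(-\beta\eta^2/2)$ out of the integral and be left with at most $\int \exp(-U(x))\dx = Z$. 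For the denominator, restricting the region of integration to the ball $B(0,S)$ (where the penalty vanishes and $U^{(a)} = U$) yields $Z^{(a)} \ge Z\,\pi(B(0,S)) \ge Z/2$ by hypothesis. Taking the ratio cancels $Z$ and produces the factor $2\exp(-\beta\eta^2/2)$.

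For the gradient growth estimate, I would differentiate $U^{(a)}$ directly to obtain
\begin{align*}
    \nabla U^{(a)}(x) = \nabla U(x) + \beta\,(\norm x - S)_+\,\frac{x}{\norm x}\,,
\end{align*}
and apply the triangle inequality, using $(\norm x - S)_+ \le \norm x$, to get $\norm{\nabla U^{(a)}(x)} \le \norm{\nabla U(x)} + \beta\norm x$. The H\"older continuity of $\nabla U$ combined with $\nabla U(0)=0$ gives $\norm{\nabla U(x)} \le L\,\norm x^s$, and since $s \in (0,1]$ we have $\norm x^s \le 1+\norm x$. Adding the two contributions recovers the advertised bound $L + (\beta+L)\,\norm x$.

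There is no real technical obstacle here: the lemma is a direct unpacking of the definitions. The only point worth highlighting is the role of the hypothesis $\pi(B(0,S)) \ge 1/2$, which converts the pointwise inequality $U^{(a)} \ge U$ on the ball into a comparable lower bound on the partition function $Z^{(a)}$; without it, factoring out $\exp(-\beta\eta^2/2)$ in the numerator would not yield a usable probabilistic statement about $\pi^{(a)}$. This is also why the choice of $S$ in applications must be at least on the order of the median of $\norm\cdot$ under $\pi$.
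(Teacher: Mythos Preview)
Your proposal is correct. The paper does not actually prove this lemma in-text---it is imported verbatim from \cite[Lemma~23]{chewi2021analysis}---and your direct argument (factor out the penalty to bound the numerator, restrict to $B(0,S)$ to lower-bound the denominator, then differentiate and use $\norm x^s \le 1+\norm x$ for the gradient) is exactly the standard proof found there.
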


Then, letting $\{(x^{(a)}_t, v^{(a)}_t) \}_{t \geq 0}$ be the solution to the underdamped Langevin diffusion with potential $U^{(a)}$ and $\mu^{(a)} \deq \pi^{(a)}\otimes \rho$, the following lemma holds:

\begin{lemma}\label{lem:altered_subgsn}
    Assume that $h \lesssim (\beta+L)^{-1/2} \wedge \gamma^{-1} \wedge d^{-1/2}$, and $\beta\le 1$.
    Then, for all $\delta \in (0,1)$, with probability at least $1-\delta$,
    \begin{align*}
        \sup_{t \leq Nh}{\norm{x_t^{(a)}}} - S
        &\lesssim (\beta + L)\, Sh^2 + \sqrt{\frac{1}{\beta}\,\eu R_2(\mu_0^{(a)} \mmid \mu^{(a)})} + \sqrt{\frac{1}{\beta}\log\frac{16N}{\delta}}\,.
    \end{align*}
\end{lemma}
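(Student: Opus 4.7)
The plan is to control $\sup_{t \leq Nh} \norm{x_t^{(a)}}$ in two stages: first, to establish a sub-Gaussian tail bound on $\norm{x_{kh}^{(a)}}$ at each of the $N$ discrete grid points $kh$, and second, to control the within-step movement $\sup_{t \in [kh, (k+1)h]} \norm{x_t^{(a)} - x_{kh}^{(a)}}$ via stochastic calculus. For the first stage, the first part of Lemma \ref{lem:modified} provides sub-Gaussian tails for the position marginal of $\mu^{(a)}$ around $S$ with parameter $\beta$. Since the underdamped Langevin semigroup with invariant measure $\mu^{(a)}$ satisfies the data processing inequality for R\'enyi divergence, one has $\eu R_2(\mu_{kh}^{(a)} \mmid \mu^{(a)}) \leq \eu R_2(\mu_0^{(a)} \mmid \mu^{(a)})$ for every $k$, and the same bound transfers to the position marginal after a further data-processing step. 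Applying the second assertion of Lemma \ref{lem:change_of_measure} with $R_0 = S$ and $c = \beta/2$ then yields, for each $k$,
\begin{align*}
    \mathbb{P}\bigl\{\norm{x_{kh}^{(a)}} \geq S + \sqrt{2\beta^{-1}\,\eu R_2(\mu_0^{(a)} \mmid \mu^{(a)})} + \eta\bigr\} \leq 4\exp(-\beta\eta^2/4)\,,
\end{align*}
and choosing $\eta \asymp \sqrt{\beta^{-1}\log(16N/\delta)}$ together with a union bound over $k = 0, \ldots, N-1$ controls $\max_k \norm{x_{kh}^{(a)}}$ with probability at least $1 - \delta/2$. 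The analogous argument applied to the velocity marginal $\mathcal N(0, I_d)$ of $\mu^{(a)}$ yields a sub-Gaussian tail bound on $\max_k \norm{v_{kh}^{(a)}}$ of width $\sqrt d + \mc O\bigl(\sqrt{\eu R_2(\mu_0^{(a)} \mmid \mu^{(a)}) + \log(N/\delta)}\bigr)$.

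For the within-step movement, I would repeat the Gr\"onwall-type calculation of Lemma \ref{lem:better_stoc_calc}, substituting the linear gradient growth $\norm{\nabla U^{(a)}(x)} \leq L + (\beta + L)\,\norm x$ from the second part of Lemma \ref{lem:modified} in place of the H\"older continuity of $\nabla U$. Expressing $v_t^{(a)}$ and $x_t^{(a)}$ in integral form on $[kh, (k+1)h]$ and applying Lemma \ref{lem:matrix_gronwall} produces a pointwise estimate of the form
\begin{align*}
    \sup_{t \in [kh, (k+1)h]} \norm{x_t^{(a)} - x_{kh}^{(a)}} \lesssim h\,\norm{v_{kh}^{(a)}} + (\beta + L)\,h^2\,(1 + \norm{x_{kh}^{(a)}}) + \sqrt{\gamma}\, h \sup_{t \in [kh, (k+1)h]}\norm{B_t - B_{kh}}\,.
\end{align*}
A union-bounded application of Lemma \ref{lem:brownian_mgf} handles the Brownian suprema across all $N$ intervals. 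Under the step-size hypotheses $h \lesssim (\beta+L)^{-1/2} \wedge \gamma^{-1} \wedge d^{-1/2}$, and using the sub-Gaussian bounds on $\norm{v_{kh}^{(a)}}$ and $\norm{x_{kh}^{(a)}}$ from the first stage, every contribution except the leading $(\beta + L)\, S\, h^2$ term becomes lower-order and can be absorbed into the $\sqrt{\beta^{-1}\log(16N/\delta)}$ slack.

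Combining the two stages via the triangle inequality
\begin{align*}
    \sup_{t \leq Nh} \norm{x_t^{(a)}} \leq \max_k \norm{x_{kh}^{(a)}} + \max_k \sup_{t \in [kh, (k+1)h]} \norm{x_t^{(a)} - x_{kh}^{(a)}}
\end{align*}
then delivers the claimed estimate after rescaling the failure probabilities so that they sum to at most $\delta$. The main obstacle I anticipate is the careful bookkeeping required for the movement stage: the factor $\norm{x_{kh}^{(a)}}$ appearing in the Gr\"onwall bound is itself only controlled in high probability and carries the $\sqrt{\beta^{-1}\,\eu R_2(\mu_0^{(a)} \mmid \mu^{(a)})}$ and $\sqrt{\beta^{-1}\log(16N/\delta)}$ contributions, so one must verify that after multiplication by the prefactor $(\beta + L)\,h^2$ these terms collapse cleanly into lower-order contributions under the specified step-size regime rather than producing cross-terms that exceed the stated bound.
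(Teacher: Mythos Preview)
Your proposal is correct and follows essentially the same route as the paper: control the grid-point values $\max_k\norm{x_{kh}^{(a)}}$ and $\max_k\norm{v_{kh}^{(a)}}$ by combining the sub-Gaussian tails of $\mu^{(a)}$ with Lemma~\ref{lem:change_of_measure} and the monotonicity of $\eu R_2(\mu_t^{(a)}\mmid\mu^{(a)})$, then bound the within-step movement by the matrix Gr\"onwall argument using the linear gradient growth of $U^{(a)}$ from Lemma~\ref{lem:modified}, and finally simplify under the step-size hypotheses. The bookkeeping concern you flag is exactly the one the paper handles, and it resolves just as you anticipate: the cross-term $(\beta+L)\,h^2$ times the $\sqrt{\beta^{-1}}$-sized contributions is bounded by those same contributions because $(\beta+L)\,h^2\lesssim 1$, while the $h\,\norm{v_{kh}^{(a)}}$ and Brownian terms are absorbed using $h\lesssim d^{-1/2}$, $h\lesssim\gamma^{-1}$, and $\beta\le 1$.
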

\begin{proof}
   We can use the change of measure lemma (Lemma~\ref{lem:change_of_measure}) together with the sub-Gaussian tail bounds in Lemmas~\ref{lem:concentration_target_lsi}, \ref{lem:modified} to see that with probability at least $1-\delta$, the following events hold simultaneously:
    \begin{align*}
        \max_{k \leq N}{\norm{x^{(a)}_{kh}}}
        &\le S + \sqrt{\frac{2}{\beta} \, \eu R_2(\mu_{0}^{(a)} \mmid \mu^{(a)})} + \sqrt{\frac{4}{\beta} \log \frac{8N}{\delta}}\, \\
        \max_{k \leq N}{\norm{v_{kh}^{(a)}}}
        &\le \sqrt{d} + \sqrt{2 \, \eu R_2( \mu_{0}^{(a)} \mmid \mu^{(a)})} + \sqrt{4 \log \frac{4N}{\delta}}\, .
    \end{align*}
    Here we use a union bound together with the monotonicity of $t\mapsto \eu R_2(\mu_t^{(a)} \mmid \mu^{(a)})$ in $t$.
    
    For the interpolant times, we will use Gr\"onwall's matrix inequality, with the following inequality for $x$:
    \begin{align*}
        \norm{x^{(a)}_{kh} - x^{(a)}_{kh+t}} &\leq h\, \norm{v^{(a)}_{kh}} + \Bigl \lVert \int_0^t \int_0^\tau \gamma v^{(a)}_{kh+\tau'} \, \D \tau' \, \D \tau \Bigr \rVert + \Bigl \lVert\int_0^t \int_0^\tau \nabla U^{(a)}(x^{(a)}_{kh+\tau'})\, \D \tau' \, \D \tau\Bigr \rVert \\
        &\qquad +  \Bigl \lVert \int_0^t \int_0^\tau \sqrt{2\gamma} \,  \D B_{kh+\tau'} \, \D \tau \Bigr \rVert \\
        &\leq h\, \norm{v^{(a)}_{kh}} + \gamma h\,\Bigl(h\,\norm{v^{(a)}_{kh}} + \int_0^t \norm{v^{(a)}_{kh+\tau} -v^{(a)}_{kh}} \, \D \tau\Bigr) + L h^2 \\
        &\qquad{} + (\beta+L)\,h\,\Bigl(h\,\norm{x^{(a)}_{kh}} + \int_0^t \norm{x^{(a)}_{kh+\tau} -x^{(a)}_{kh}} \, \D \tau \Bigr) \\
        &\qquad{} + \sqrt{2\gamma}\, h \sup_{\tau \in [0, h]}{\norm{B_{kh+\tau} - B_{kh}}}\,.
    \end{align*}
    Likewise,
    \begin{align*}
        \norm{v^{(a)}_{kh} - v^{(a)}_{kh+t}}
        &\leq \Bigl \lVert \int_0^t \gamma v^{(a)}_{kh+\tau} \, \D \tau \Bigr \rVert + \Bigl \lVert \int_0^t \nabla U^{(a)}(x^{(a)}_{kh+\tau})\, \D \tau \Bigr \rVert +  \Bigl \lVert \int_0^t \sqrt{2\gamma} \,  \D B_{kh+\tau}\Bigr \rVert \\
        &\leq \gamma\,\Bigl(h\, \norm{v^{(a)}_{kh}} + \int_0^t \norm{v^{(a)}_{kh+\tau} -v^{(a)}_{kh}} \, \D \tau\Bigr) + Lh\\
        &\qquad{} + (\beta+L)\,\Bigl(h\,\norm{x^{(a)}_{kh}} + \int_0^t \norm{x^{(a)}_{kh+\tau} -x^{(a)}_{kh}} \, \D \tau \Bigr) \\
        &\qquad{} + \sqrt{2\gamma}\sup_{\tau \in [0, h]} {\norm{B_{kh+\tau} - B_{kh}}}\,.
    \end{align*}
    
    Consequently, we can apply the matrix Gr\"onwall inequality analogously to how we did in Lemma~\ref{lem:matrix_gronwall} with $c = c_1 + c_2$ denoting the following matrices:
    \begin{align*}
        A &= \begin{bmatrix}
            (\beta + L)\,h & \gamma h  \\
            (\beta+L) & \gamma
        \end{bmatrix}\,, \\
        c_1 &=  \begin{bmatrix}
            (\beta+L)\,h^2\, \norm{x^{(a)}_{kh}} + \gamma h^2\, \norm{v^{(a)}_{kh}} + Lh^2+ \sqrt{2\gamma}\, h\sup_{t \in [0, h]}\norm{B_{kh+t} - B_{kh}}  \\
            (\beta+L)\,h\, \norm{x^{(a)}_{kh}} + \gamma h\, \norm{v^{(a)}_{kh}} + Lh +  \sqrt{2\gamma} \sup_{t \in [0, h]}\norm{B_{kh+t} - B_{kh}}
        \end{bmatrix}\,, \\
        c_2 &= \begin{bmatrix}
            h\, \norm{v^{(a)}_{kh}} \\
            0
        \end{bmatrix}\,.
    \end{align*}
    
    Note that $c_1$ here is again in the image space of $A$, so that $(AA^{\dagger} - I_2)\,c = 0$. Finally, after calculating the matrix exponential we find
    \begin{align*}
        \sup_{t \leq h}{\norm{x^{(a)}_{kh} - x^{(a)}_{kh+t}}}
        &\leq h \exp\bigl((\beta+L)\,h^2+ \gamma h\bigr)\, \Bigl((\beta+L)\, h\, \norm{x^{(a)}_{kh}} + \gamma h\, \norm{v^{(a)}_{kh}} + Lh \\
        &\qquad\qquad\qquad\qquad\qquad\qquad\qquad\qquad{} + \sqrt{2\gamma} \sup_{t \leq h} {\norm{B_{kh+t} - B_{kh}}}\Bigr)  \\
        &\qquad{} + h\, \frac{(\beta+L)\exp\bigl((\beta+L)\,h^2+ \gamma h\bigr) h  + \gamma}{(\beta+L)\,h + \gamma}\, \norm{v^{(a)}_{kh}} \\
        &\le 2h\, \Bigl((\beta+L)\,h\, \norm{x^{(a)}_{kh}} +\norm{v^{(a)}_{kh}} + Lh + \sqrt{2\gamma} \sup_{t \leq h}{\norm{B_{kh+t} - B_{kh}}}\Bigr)\,,
    \end{align*}
    where in the second line we take $h \lesssim \frac{1}{(\beta+L)^{1/2}} \wedge \frac{1}{\gamma}$. Note that this is also entirely analogous to the calculation in Lemma \ref{lem:better_stoc_calc}.
    
    Subsequently, we can take a union bound to obtain for any $S_1, S_2$,
    \begin{align*}
        &\P\Bigl\{\sup_{t \in [0, Nh]}{\norm{x^{(a)}_{t}}} \geq \eta \Bigr\} \\
        &\qquad \leq \P\Bigl\{\max_{k = 0, 1, \ldots N-1}{\norm{x^{(a)}_{kh}}} \geq S_1 \Bigr\} + \P\Bigl\{\max_{k = 0, 1, \ldots N-1}{\norm{v^{(a)}_{kh}}} \geq S_2 \Bigr\}  \\
        &\qquad\qquad{} + \sum_{k=0}^{N-1} \P\Bigl\{\sup_{t \in [0,h]}{\norm{x_{kh+t}^{(a)} - x_{kh}^{(a)}}} \ge \eta - S_1\Bigr\} \\
        &\qquad \leq \P\Bigl\{\max_{k = 0, 1, \ldots N-1}{\norm{x^{(a)}_{kh}}} \geq S_1 \Bigr\} + \P\Bigl\{\max_{k = 0, 1, \ldots N-1}{\norm{v^{(a)}_{kh}}} \geq S_2 \Bigr\}  \\
        &\qquad\qquad{} + \sum_{k=0}^{N-1} \P\Bigl\{\sup_{t \in [0,h]} \sqrt{2\gamma}\,\norm{B_{kh+t} - B_{kh}} \ge \frac{\eta - S_1}{2h}
        - (\beta+L)\,S_1 h - S_2 - L h \Bigr\}\,.
    \end{align*}
    
    Subsequently, taking respectively $S_1 = S + \sqrt{\frac{2}{\beta} \, \eu R_2(\mu_{0}^{(a)} \mmid \mu^{(a)})} + \sqrt{\frac{4}{\beta} \log \frac{8N}{\delta}}$, $S_2 = \sqrt{d} + \sqrt{2\,\eu R_2(\mu_{0}^{(a)} \mmid \mu^{(a)})} + \sqrt{4\log \frac{4N}{\delta}}$, we use the Brownian motion tail bound (Lemma \ref{lem:brownian_mgf}) to get with probability $1-2\delta$:
    \begin{align*}
        \sup_{t \leq Nh}{\norm{x_t^{(a)}}} - S_1
        &\lesssim (\beta+L)\,S_1 h^2 + S_2 h  + Lh^2 + \sqrt{\gamma dh^3 \log \frac{3N}{\delta}}\,.
    \end{align*}
    If we assume that $\beta\le 1$ and $h \lesssim \frac{1}{\sqrt d}$, then we can further simplify this bound to yield
    \begin{align*}
        \sup_{t \leq Nh}{\norm{x_t^{(a)}}} - S
        &\lesssim (\beta + L)\, Sh^2 + \sqrt{\frac{1}{\beta}\,\eu R_2(\mu_0^{(a)} \mmid \mu^{(a)})} + \sqrt{\frac{1}{\beta}\log\frac{8N}{\delta}}\,.
    \end{align*}
    This concludes the proof.
\end{proof}

To transfer this sub-Gaussianity to the original underdamped Langevin process, we consider the following bound on the chi-squared divergence between these two processes.

\begin{proposition}\label{prop:girs_altered}
    Let $Q_T, Q_T^{(a)}$ represent respectively the laws on the path space of the original and modified diffusions, under the same initialization $\mu_0$. Then, if $\beta \lesssim \frac{\gamma}{T} \wedge L$ and $h \lesssim (\beta + L)^{-1/2} \wedge \gamma^{-1} \wedge d^{-1/2}$, then
    \begin{align*}
        \eu R_2(Q_T \mmid Q_T^{(a)})
        \lesssim \frac{\beta^2 L^2 S^2 T h^4}{\gamma} + \frac{\beta T}{\gamma} \, \bigl(\eu R_2(\mu_0 \mmid \mu^{(a)}) + \log N\bigr)\,.
    \end{align*}
\end{proposition}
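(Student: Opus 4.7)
The plan is to apply Girsanov's theorem to convert the path-space Rényi divergence into an exponential moment of the drift difference, and then combine the explicit form of the drift difference with the sub-Gaussian envelope from Lemma~\ref{lem:altered_subgsn}.

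First, I would invoke Corollary~\ref{cor:girsanov} with $q=2$ and the event $\mc E$ taken to be the whole sample space. Since the two SDEs share the same volatility $\sigma = \begin{bmatrix} 0 & 0 \\ 0 & \sqrt{2\gamma}\,I_d\end{bmatrix}$ and differ only in the drift of the momentum coordinate by $\nabla U(x) - \nabla U^{(a)}(x) = -\beta\,(\norm x - S)_+ \, x/\norm x$, the contribution $\sigma^{\dagger}(b_t^P - b_t^Q)$ simplifies to $-\beta\,(2\gamma)^{-1/2}\,(\norm{x_t^{(a)}} - S)_+ \, x_t^{(a)}/\norm{x_t^{(a)}}$, so that
\begin{align*}
\eu R_2(Q_T \mmid Q_T^{(a)}) \;\le\; \tfrac{1}{2}\log \E^{Q_T^{(a)}}\Bigl[\exp\Bigl(\tfrac{4\beta^2}{\gamma}\int_0^T (\norm{x_t^{(a)}} - S)_+^2 \, \D t\Bigr)\Bigr]\,.
\end{align*}
I then replace the time integral by $T\cdot Y^2$ where $Y \deq \sup_{t\le T}(\norm{x_t^{(a)}}-S)_+$, reducing matters to an MGF estimate for $Y^2$ under the modified process.

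The next step is to feed in the tail bound from Lemma~\ref{lem:altered_subgsn}, which under the hypothesis $h \lesssim (\beta+L)^{-1/2}\wedge \gamma^{-1}\wedge d^{-1/2}$ yields, with $\eta_0 \deq C\,((\beta+L)\,Sh^2 + \sqrt{\beta^{-1}\,\eu R_2(\mu_0 \mmid \mu^{(a)})})$ for a suitable constant $C$,
\begin{align*}
\P\bigl(Y \ge \eta_0 + \eta\bigr) \;\lesssim\; N\,\exp(-c\,\beta\,\eta^2) \qquad\text{for all}~\eta\ge 0\,,
\end{align*}
some constant $c>0$. Using the elementary bound $Y^2 \le 2(Y-\eta_0)_+^2 + 2\eta_0^2$, I split
\begin{align*}
\E\exp\Bigl(\tfrac{4\beta^2 T}{\gamma}\,Y^2\Bigr) \;\le\; \exp\Bigl(\tfrac{8\beta^2 T\,\eta_0^2}{\gamma}\Bigr)\cdot \E\exp\Bigl(\tfrac{8\beta^2 T}{\gamma}\,(Y-\eta_0)_+^2\Bigr)\,.
\end{align*}
The deterministic factor is controlled directly: expanding $\eta_0^2$ and using $\beta \lesssim L$ gives $\tfrac{8\beta^2 T \eta_0^2}{\gamma} \lesssim \tfrac{\beta^2 L^2 S^2 T h^4}{\gamma} + \tfrac{\beta T}{\gamma}\,\eu R_2(\mu_0 \mmid \mu^{(a)})$.

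The key technical point is extracting the correct prefactor on the $\log N$ term. Integrating the tail at the threshold scale $\lambda_0 \deq c\beta/2$ gives only
\begin{align*}
\log\E\exp\bigl(\lambda_0\,(Y-\eta_0)_+^2\bigr) \;\lesssim\; \log N\,,
\end{align*}
which would lose the factor $\beta T/\gamma$. To recover it, I would invoke the Jensen-type inequality $\log\E\exp(\lambda Z) \le (\lambda/\lambda_0)\,\log\E\exp(\lambda_0 Z)$ valid for $Z\ge 0$ and $\lambda \le \lambda_0$, applied with $\lambda = 8\beta^2 T/\gamma$. The hypothesis $\beta \lesssim \gamma/T$ is precisely what guarantees $\lambda \le \lambda_0$, and the ratio $\lambda/\lambda_0 \asymp \beta T/\gamma$ is exactly the prefactor appearing in the proposition. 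Combining the deterministic and random contributions and absorbing the final factor of $1/2$ from Girsanov gives the stated bound.

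The main obstacle is this last interpolation step: naive integration of the sub-Gaussian tail already yields a bound, but to achieve the sharp $(\beta T/\gamma)\log N$ factor one must exploit the gap between the target MGF scale $\lambda$ and the critical sub-Gaussian scale $\lambda_0$. Verifying that all constants behave correctly and that the hypotheses $\beta \lesssim \gamma/T \wedge L$ and $h \lesssim (\beta+L)^{-1/2}\wedge \gamma^{-1}\wedge d^{-1/2}$ propagate cleanly through the split $Y^2 \le 2(Y-\eta_0)_+^2 + 2\eta_0^2$ is the main bookkeeping burden, but no further technical ingredient is needed beyond Lemma~\ref{lem:altered_subgsn} and Girsanov.
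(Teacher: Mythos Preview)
Your approach is correct and reaches the same bound, but it differs from the paper's in a meaningful way. The paper does not work unconditionally: it conditions on the high-probability event $\mc E_\delta$ from Lemma~\ref{lem:altered_subgsn}, applies Corollary~\ref{cor:girsanov} with $q=4$ on that event to bound $\E\bigl[(\D Q_T/\D Q_T^{(a)})^4\,\ind_{\mc E_\delta}\bigr]$, and then invokes the unconditioning Lemma~\ref{lem:uncondition} (with $Y = (\D Q_T/\D Q_T^{(a)})^2$) to descend to the second moment. The assumption $\beta \lesssim \gamma/T$ enters there as the requirement $\xi < 1$ on the exponent of $1/\delta$.

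Your route bypasses Lemma~\ref{lem:uncondition} entirely. You convert the high-probability statement into a genuine sub-Gaussian tail for $(Y-\eta_0)_+$, integrate it at the critical scale $\lambda_0 \asymp \beta$ to get $\log\E e^{\lambda_0(Y-\eta_0)_+^2} \lesssim \log N$, and then use the log-convexity inequality $\log\E e^{\lambda Z} \le (\lambda/\lambda_0)\,\log\E e^{\lambda_0 Z}$ to transfer this to the target scale $\lambda \asymp \beta^2 T/\gamma$. The hypothesis $\beta \lesssim \gamma/T$ plays exactly the same role (ensuring $\lambda \le \lambda_0$), and the ratio $\lambda/\lambda_0 \asymp \beta T/\gamma$ is where the sharp prefactor on $\log N$ comes from. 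This is slightly more self-contained and stays at $q=2$ throughout, whereas the paper's route through $q=4$ plus Lemma~\ref{lem:uncondition} is a reusable template already deployed elsewhere in the discretization analysis. Neither approach dominates; they are equivalent in strength.
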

\begin{proof}
    Conditioning on the event in Lemma \ref{lem:altered_subgsn}, which we denote by $\mc E_{\delta}$ for some $\delta \leq 1/2$, then using Girsanov's theorem (Corollary \ref{cor:girsanov}) we get (for some sufficiently small $h$ so that Novikov's condition is satisfied)
    \begin{align*}
        \log \E\Bigl[ \Bigl(\frac{\D Q_T}{\D Q_T^{(a)}}\Bigr)^4\, \mathbbm{1}_{\mc E_\delta} \Bigr] &\leq \frac{1}{2} \log \E \Bigl[\exp \Bigl(\frac{16}{\gamma} \int_0^T \norm{\nabla U(x_t^{(a)}) - \nabla U^{(a)}(x_t^{(a)})}^2 \, \D t \Bigr)\, \mathbbm{1}_{\mc E_\delta} \Bigr] \\
        &\le \frac{1}{2} \log \E\Bigl[\exp \Bigl( \frac{16\beta^2}{\gamma} \int_0^T \bigl(\norm{x_t^{(a)}} - S\bigr)^2_+ \, \D t\Bigr)\, \ind_{\mc E_\delta} \Bigr] \\
        &\lesssim \frac{\beta^2 T}{\gamma} \, \Bigl\{ (\beta + L)^2\, S^2 h^4 + \frac{1}{\beta}\,\eu R_2(\mu_0 \mmid \mu^{(a)}) + \frac{1}{\beta} \log \frac{16N}{\delta}\Bigr\}\,.
    \end{align*}
If we take $\beta \lesssim \gamma/T$ and that $L\ge \beta$, we can use Lemma \ref{lem:uncondition} to get
\begin{align*}
    \eu R_2(Q_T \mmid Q_T^{(a)}) &= \log \E \Bigl[\Bigl( \frac{\D Q_T}{\D Q_T^{(a)}}\Bigr)^2 \Bigr]
   \lesssim \frac{\beta^2 L^2 S^2 T h^4}{\gamma} + \frac{\beta T}{\gamma} \, \bigl(\eu R_2(\mu_0 \mmid \mu^{(a)}) + \log N\bigr)\,.
\end{align*}
This concludes the proof.
\end{proof}

\begin{proposition}\label{prop:high_prob_diffusion}
    Consider the continuous time diffusion $(x_t, v_t)_{t\ge 0}$ initialized at $\mu_0$. For $h \lesssim (\beta+L)^{-1/2} \wedge \gamma^{-1} \wedge d^{-1/2}$, $S \asymp \mf m$, and $\beta \asymp \frac{\gamma}{T}$, for $\delta \in (0, 1/2)$, the following holds with probability $1-\delta$:
    \begin{align*}
        \max_{k = 0, \dotsc, N-1}{\norm{x_{kh}}}
        &\lesssim \mf m + \sqrt{\frac{T}{\gamma} \,\bigl(\eu R_2(\mu_0\mmid \mu^{(a)}) + \log \frac{N}{\delta}\bigr)}\,, \\
        \max_{k=0,\dotsc,N-1}{\norm{v_{kh}}}
        &\lesssim \sqrt d + \sqrt{\eu R_2(\mu_0\mmid \mu^{(a)}) + \log \frac{N}{\delta}}\,.
    \end{align*}
\end{proposition}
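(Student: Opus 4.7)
The strategy is to transfer the high-probability iterate bounds from the \emph{modified} diffusion (whose stationary law $\mu^{(a)} = \pi^{(a)}\otimes\rho$ already has sub-Gaussian marginals) to the original diffusion, using the Girsanov bound of Proposition~\ref{prop:girs_altered} together with the change-of-measure lemma (Lemma~\ref{lem:change_of_measure}) applied at the path-space level.

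First I would establish the analogous bounds under the modified path law $Q_T^{(a)}$. By Lemma~\ref{lem:modified}(1), $\pi^{(a)}$ is sub-Gaussian around $S$ with parameter $\beta$, and by Lemma~\ref{lem:concentration_target_lsi}, $\rho$ is standard sub-Gaussian around $\sqrt d$. Combining these concentration estimates with Lemma~\ref{lem:change_of_measure} and the monotonicity of $t\mapsto \eu R_2(\mu_t^{(a)}\mmid\mu^{(a)})$ along the diffusion (plus the data processing inequality to pass to the $x$- and $v$-marginals), a union bound over the $N$ time points yields, with probability $1-\delta/2$ under $Q_T^{(a)}$,
\begin{align*}
    \max_k\|x_{kh}^{(a)}\| &\lesssim S + \sqrt{\tfrac{1}{\beta}\,\bigl(\eu R_2(\mu_0\mmid\mu^{(a)})+\log(N/\delta)\bigr)}, \\
    \max_k\|v_{kh}^{(a)}\| &\lesssim \sqrt d + \sqrt{\eu R_2(\mu_0\mmid\mu^{(a)})+\log(N/\delta)}.
\end{align*}

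Next I would transfer these path-level events from $Q_T^{(a)}$ to $Q_T$ via the second assertion of Lemma~\ref{lem:change_of_measure}: any sub-Gaussian complementary tail under $Q_T^{(a)}$ persists under $Q_T$ after inflating the threshold by $\sqrt{(1/\beta)\,\eu R_2(Q_T\mmid Q_T^{(a)})}$ for the $x$-coordinate and $\sqrt{\eu R_2(Q_T\mmid Q_T^{(a)})}$ for the $v$-coordinate. Plugging in the bound from Proposition~\ref{prop:girs_altered} and the prescribed scalings $\beta\asymp \gamma/T$, $S\asymp\mathfrak m$, the second term in the Girsanov bound reduces to $\eu R_2(\mu_0\mmid\mu^{(a)})+\log N$, while the first term $\beta^2 L^2 S^2 T h^4/\gamma \asymp \gamma L^2 \mathfrak m^2 h^4/T$ contributes at most $\sqrt{(T/\gamma)\cdot \gamma L^2\mathfrak m^2 h^4/T}=L\mathfrak m h^2 \lesssim \mathfrak m$ after multiplication by $1/\beta = T/\gamma$ and taking a square root; this is absorbed into the leading $\mathfrak m$ term by the step-size restriction $h\lesssim L^{-1/2}$. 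A final union bound on the two failure events of total probability $\le \delta$ gives the stated estimates.

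The main obstacle is the delicate interplay between the path-measure level (at which Girsanov and Proposition~\ref{prop:girs_altered} operate) and the finite collection of pointwise iterate bounds we actually need. Concretely, one must check that the $N$-point union bound interacts cleanly with the change-of-measure inflation (so that the $\log N/\delta$ factor appears only additively inside the R\'enyi-inflated threshold, and not multiplicatively in the Girsanov exponent), and that the scalings $\beta\asymp\gamma/T$, $S\asymp\mathfrak m$ are jointly compatible with the hypotheses of Proposition~\ref{prop:girs_altered}, namely $\beta\lesssim (\gamma/T)\wedge L$. Verifying that the discretization contribution $\gamma L^2 \mathfrak m^2 h^4/T$ is indeed absorbable is the quantitative heart of the argument and relies crucially on the full step-size restriction $h\lesssim (\beta+L)^{-1/2}\wedge \gamma^{-1}\wedge d^{-1/2}$.
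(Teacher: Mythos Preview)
The proposal is correct and follows essentially the same route as the paper: establish sub-Gaussian iterate bounds under the modified path law $Q_T^{(a)}$ (this is the content recalled from the proof of Lemma~\ref{lem:altered_subgsn}), then transfer to $Q_T$ via the path-space change-of-measure Lemma~\ref{lem:change_of_measure} combined with the Girsanov estimate of Proposition~\ref{prop:girs_altered}, and finally substitute $\beta\asymp\gamma/T$, $S\asymp\mathfrak m$. Your quantitative check that $\tfrac{1}{\beta}\cdot\tfrac{\beta^2 L^2 S^2 T h^4}{\gamma}=L^2\mathfrak m^2 h^4$ and hence its square root $L\mathfrak m h^2\lesssim\mathfrak m$ under $h\lesssim L^{-1/2}$ is exactly the simplification the paper performs; the only cosmetic difference is that the paper handles the $v$-coordinate directly via Lemma~\ref{lem:concentration_target_lsi} and a union bound rather than routing it through the path-space transfer, but your version is equally valid.
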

\begin{proof}
    Recall from the proof of Lemma \ref{lem:altered_subgsn} that with probability $1-\delta$,
    \begin{align*}
        \max_{k=0, \ldots N-1}{\norm{x_{kh}^{(a)}}} \lesssim S + \sqrt{\frac{1}{\beta}\, \eu R_2(\mu_0 \mmid \mu^{(a)})} + \sqrt{\frac{1}{\beta} \log \frac{8N}{\delta}}\,.
    \end{align*}
    In particular, this immediately implies that the following holds: for $\eta \ge 0$,
    \begin{align*}
        \Pr\Bigl(\max_{k=0, \ldots N-1}{\norm{x_{kh}^{(a)}}} \gtrsim S + \sqrt{\frac{1}{\beta}\, \eu R_2(\mu_0 \mmid \mu^{(a)})} + \sqrt{\frac{1}{\beta} \log \frac{8N}{\delta}} + \eta \Bigr)
        &\lesssim N \exp(-c\beta \eta^2)\,,
    \end{align*}
    for a universal constant $c > 0$.
    
    Then, using the change of measure (Lemma~\ref{lem:change_of_measure}) together with the bound in Proposition~\ref{prop:girs_altered}, choosing $S \asymp \mf m$, we get with probability $1-\delta$
    \begin{align*}
        \max_{k = 0, \ldots N-1}{\norm{x_{kh}}} &\lesssim S + \sqrt{\frac{1}{\beta}\, \eu R_2(\mu_0 \mmid \mu^{(a)})} + \sqrt{\frac{1}{\beta}\, \eu R_2(Q_T \mmid Q_T^{(a)})} + \sqrt{\frac{1}{\beta} \log \frac{N}{\delta}} \\
        &\lesssim \mf m + \sqrt{\frac{1}{\beta} \,\bigl(\eu R_2(\mu_0\mmid \mu^{(a)}) + \log \frac{N}{\delta}\bigr) + \frac{\beta L^2 T h^4 \mf m^2}{\gamma}}\,.
    \end{align*}
    We choose $\beta \asymp \gamma/T$ so that
    \begin{align*}
        \max_{k = 0, \ldots N-1}{\norm{x_{kh}}}
        &\lesssim \mf m + \sqrt{\frac{T}{\gamma} \,\bigl(\eu R_2(\mu_0\mmid \mu^{(a)}) + \log \frac{N}{\delta}\bigr)}\,.
    \end{align*}
    Finally, combining this with a union bound to control $\norm{v_{kh}}$ from Lemma \ref{lem:concentration_target_lsi}, we get the Proposition.
\end{proof}

\subsection{Completing the Discretization Proof}

We proceed by following the proof of~\cite{chewi2021analysis}.

\medskip{}

\begin{proof}[Proof of Proposition~\ref{prop:main_disc_bd}]
    Let $\{x_t\}_{t \geq 0}$ follow the continuous-time process. Let $P_T, Q_T$ denote the measures on the path space corresponding to the interpolated process and the continuous-time diffusion respectively, with both being initialized at $\mu_0 = \pi_0 \otimes \mc{N}(0, I_d)$. Then, define
    \begin{align*}
        G_t \defeq \frac{1}{\sqrt{2\gamma }} \int_0^t \langle \nabla U(x_\tau) - \nabla U(x_{\floor{\tau/h} h}), \D B_\tau \rangle - \frac{1}{4\gamma} \int_0^t \norm{\nabla U(x_\tau) - \nabla U(x_{\floor{\tau/h}h})}^2 \, \D \tau.
    \end{align*}
    
    From Girsanov's theorem (Theorem \ref{thm:girsanov}), we obtain immediately using It\^o's formula
    \begin{align*}
        \E_{Q_T} \Bigl[\Bigl(\frac{\D P_T}{\D Q_T}\Bigr)^q \Bigr] - 1
        &= \E \exp(q G_T) - 1 \\
        &= \frac{q\,(q-1)}{4\gamma} \E \int_0^T \exp(qG_t)\, \norm{\nabla U(x_t) - \nabla U(x_{\floor{t/h}h})}^2 \, \D t \\
        &\leq \frac{q^2}{4\gamma} \int_0^T \sqrt{\E[\exp(2qG_t)] \E[\norm{\nabla U(x_t) - \nabla U(x_{\floor{t/h}h})}^4]} \, \D t\,.
    \end{align*}
    Bounding these terms individually, we first use Corollary \ref{cor:girsanov} and \eqref{eq:holder} to get
    \begin{align*}
        \E \exp(2qG_t) &\leq \sqrt{\E \exp\Bigl(\frac{4q^2}{\gamma} \int_0^t \norm{\nabla U(x_r) - \nabla U(x_{\floor{r/h}h})}^2 \, \D r\Bigr)} \\
        &\leq \sqrt{\E \exp \Bigl(\frac{4L^2 q^2}{\gamma} \int_0^t \norm{x_r - x_{\floor{r/h}h}}^
        {2s} \, \D r \Bigr)}\,. 
    \end{align*}
    Let us now condition on the event
    \begin{align*}
        \mc E_{\delta, kh} \defeq \Bigl\{ \max_{j=0, 1,\dotsc, k-1}{\norm{x_{kh}}} \le R_\delta^x, \; \max_{j=0,1,\dotsc,k-1}{\norm{v_{kh}}} \le R_\delta^v\Bigr\}\,.
    \end{align*}
    By Proposition \ref{prop:high_prob_diffusion}, we can have $\P(\mc E_{\delta,kh}) \ge 1-\delta$ while choosing
    \begin{align*}
        R_\delta^x
        &\lesssim \mf m + \sqrt{\frac{\gamma}{T} \,\bigl(\eu R_2(\mu_0\mmid \mu^{(a)}) + \log \frac{N}{\delta}\bigr)}\,, \\
        R_\delta^v
        &\lesssim \sqrt d + \sqrt{\eu R_2(\mu_0\mmid \mu^{(a)}) + \log \frac{N}{\delta}}\,.
    \end{align*}
    
    We proceed to bound our desired quantity through some careful steps.
    
    \textbf{One step error.} Consider first the error on a single interval $[0,h]$. If we presume that the step size satisfies $h \lesssim (\gamma^{1-s}/(L^2 d^s q^2))^{1/(1+3s)}$, Lemma \ref{lem:better_stoc_calc} implies
    \begin{align*}
        \log \E \exp\Bigl(\frac{8L^2 q^2}{\gamma} \int_0^h \norm{x_t - x_0}^2  \, \D t\Bigr) &\le \log \E \exp \Bigl( \frac{8L^2 hq^2}{\gamma} \sup_{t \in[0,h]} {\norm{x_t - x_0}}^2 \Bigr) \\
        &\lesssim \frac{L^{2+2s} h^{1+4s} q^2}{\gamma}\, (1+\norm{x_0}^{2s^2}) + \frac{L^2 h^{1+2s} q^2}{\gamma}\, \norm{v_0}^{2s} \\
        &\qquad{} + \frac{L^2 d^s h^{1+3s} q^2}{\gamma^{1-s}}\,.
    \end{align*}
    
    \textbf{Iteration.} If we let $\{\mc F_t\}_{t \geq 0}$ denote the filtration, then writing $H_t = \int_0^t \norm{x_r - x_{\floor{r/h}h}}^2 \, \D r$, we can condition on $\mc F_{(N-1)h}$ and iterate our one step bound.
    \begin{align*}
        &\log \E \Bigl[\exp\Bigl(\frac{8L^2 q^2}{\gamma}\, H_{Nh}\Bigr)\, \ind_{\mc E_{\delta, Nh}}\Bigr] \\
        &\qquad \le \log \E \Bigl[\exp\Bigl( \frac{8L^2 q^2}{\gamma}\, H_{(N-1)h} \\
        &\qquad\qquad\qquad\qquad\qquad{} + \mc O\bigl(
        \frac{L^{2+2s} h^{1+4s} q^2}{\gamma}\, (1+\norm{x_{(N-1)h}}^{2s^2}) \\
        &\qquad\qquad\qquad\qquad\qquad\qquad\qquad{} + \frac{L^2 h^{1+2s} q^2}{\gamma}\, \norm{v_{(N-1)h}}^{2s}+\frac{L^2 d^s h^{1+3s} q^2}{\gamma^{1-s}}
        \bigr)\Bigr)
        \, \ind_{\mc E_{\delta, Nh}} \Bigr] \\
        &\qquad \le \log \E \Bigl[\exp \Bigl( \frac{8L^2 q^2}{\gamma}\, H_{(N-1)h}\Bigr)\,\ind_{\mc E_{\delta, (N-1)h}} \Bigr] \\
        &\qquad\qquad{} + \mc O\Bigl(
        \frac{L^{2+2s} h^{1+4s} q^2}{\gamma}\, (R_\delta^x)^{2s^2}
        + \frac{L^2 h^{1+2s} q^2}{\gamma}\, (R_\delta^v)^{2s}+\frac{L^2 d^s h^{1+3s} q^2}{\gamma^{1-s}}
        \Bigr)\,.
    \end{align*}
    We now make additional simplifying assumptions to obtain more interpretable bounds: we assume $\gamma/T\le 1$ and $h\lesssim \frac{1}{L} \,(1 \wedge \frac{d^{1/2}}{\mf m^s})$.
    With these assumptions,
    \begin{align*}
        &\log \E \Bigl[\exp\Bigl(\frac{8L^2 q^2}{\gamma}\, H_{Nh}\Bigr)\, \ind_{\mc E_{\delta, Nh}}\Bigr] \\
        &\qquad \le \log \E \Bigl[\exp \Bigl( \frac{8L^2 q^2}{\gamma}\, H_{(N-1)h}\Bigr)\,\ind_{\mc E_{\delta, (N-1)h}} \Bigr] \\
        &\qquad\qquad{} + \mc O\Bigl( \frac{L^2 h^{1+2s} q^2}{\gamma} \, \bigl(d + \eu R_2(\mu_0 \mmid \mu^{(a)}) + \log \frac{N}{\delta}\bigr)^s\Bigr)\,.
    \end{align*}
    Completing this iteration yields
    \begin{align*}
        \log \E \Bigl[\exp\Bigl(\frac{8L^2 q^2}{\gamma} \,H_{Nh}\Bigr)\, \ind_{\mc E_{\delta, Nh}}\Bigr] 
        &\lesssim \frac{L^2 T h^{2s} q^2}{\gamma} \, \bigl(d + \eu R_2(\mu_0 \mmid \mu^{(a)}) + \log \frac{N}{\delta}\bigr)^s\,.
    \end{align*}
    
    Finally, applying Lemma \ref{lem:uncondition} when 
    \begin{align}
        h \lesssim_s
        \frac{\gamma^{1/(2s)}}{L^{1/s} T^{1/(2s)} q^{1/s}}
    \end{align}
    (where $\lesssim_s$ hides an $s$-dependent constant), we find
    \begin{align*}
        \log \E \Bigl[\exp\Bigl(\frac{4L^2 q^2}{\gamma}\, H_{Nh}\Bigr)\Bigr]
        &\lesssim 1 + \frac{L^2 T h^{2s} q^2}{\gamma} \, \bigl(d + \eu R_2(\mu_0 \mmid \mu^{(a)}) + \log N\bigr)^s\,.
    \end{align*}
    It remains to choose the appropriate step size $h$ which makes this whole quantity $\lesssim 1$. In particular, it suffices to choose
    \begin{align}\label{eq:step_size_cond}
        h \lesssim
        \widetilde{\mc O}_s\Bigl(\frac{\gamma^{1/(2s)}}{L^{1/s} T^{1/(2s)} q^{1/s}\,(d + \eu R_2(\mu_0 \mmid \mu^{(a)}))^{1/2}}\Bigr)\,.
    \end{align}
    
    \textbf{Second term.} It remains to bound the other term in our original expression. From Lemma \ref{lem:better_stoc_calc}, we obtain 
    \begin{align*}
        \E[\exp(\lambda\, \norm{x_{kh+t} - x_{kh}}^{2s}) \mid w_{kh}] \lesssim 1\,, 
    \end{align*}
    so long as $\lambda$ is chosen to be appropriately small, i.e.,
    \begin{align*}
        \lambda \asymp \frac{1}{\gamma^s d^s h^{3s} } \wedge \frac{1}{L^{2s} h^{4s}\, (1+\norm{x_{kh}})^{2s^2}} \wedge \frac{1}{h^{2s} \, \norm{v_{kh}}^{2s}}\,.
    \end{align*}
    This immediately implies a tail bound: for $\eta \ge 0$,
    \begin{align*}
        \Pr\{\norm{x_{kh+t} - x_{kh}}^{4s} \geq \eta \mid w_{kh}\} \lesssim \exp(-\lambda \sqrt{\eta})\,.
    \end{align*}
    Integrating, we get
    \begin{align*}
        &\sqrt{\E[\norm{\nabla U(x_t) - \nabla U(x_{kh})}^4]}
        \le L^2 \sqrt{\E[\norm{x_t - x_{kh}}^{4s}]}
        \lesssim L^2 \sqrt{\E \frac{1}{\lambda^2}}
        \\
        &\qquad \lesssim L^2 \gamma^s d^{s} h^{3s} + L^{2+2s} h^{4s} \sqrt{1+\E[\norm{x_{kh}}^{4s^2}]} + L^2 h^{2s}\sqrt{\E[\norm{v_{kh}}^{4s}]}\,.
    \end{align*}
    We can estimate the expectations by integration of our previous tail bound (Proposition~\ref{prop:high_prob_diffusion}): 
    \begin{align*}
        \sqrt{\E[\norm{\nabla U(x_t) - \nabla U(x_{kh})}^4]}
        &\lesssim L^2 \gamma^s d^s h^{3s} + L^{2+2s} h^{4s}  \, \Bigl(\mf m + \frac{T}{\gamma}\,\bigl(\eu R_2(\mu_0 \mmid \mu^{(a)}) + \log N\bigr)\Bigr)^{2s^2} \\
        &\qquad{} + L^2 h^{2s}\, \bigl( d + \eu R_2(\mu_0 \mmid \mu^{(a)}) + \log N \bigr)^s \\
        &\le \widetilde{\mc O}\Bigl(L^2 h^{2s}\, \bigl(d+\eu R_2(\mu_0 \mmid \mu^{(a)})\bigr)^s\Bigr)\,,
    \end{align*}
    provided that $h \le \widetilde{\mc O}(\frac{1}{L}\,( \frac{d^{1/2}}{\mf m^s} \wedge \eu R_2 \, (\frac{\gamma \eu R_2}{T})^{s/2}))$, where $\eu R_2 = \eu R_2(\mu_0 \mmid \mu^{(a)})$. In our applications, this condition is not dominant and can be disregarded.

    \textbf{Combining the bounds.} Finally, we can combine each of these steps
    to find that, provided~\eqref{eq:step_size_cond} for the step size holds,
    \begin{align*}
        \E_{Q_T}\Bigl[\Bigl(\frac{\D P_T}{\D Q_T}\Bigr)^q\Bigr] -1
        &\le \widetilde{\mc O}\Bigl(\frac{Tq^2}{\gamma}
        \,L^2 h^{2s} \, \bigl(d + \eu R_2(\mu_0 \mmid \mu^{(a)})\bigr)^s\Bigr)\,.
     \end{align*}
    Finally, the following step size condition suffices to bound the R\'enyi divergence by $\epsilon^2$:
        \begin{align*}
        h \lesssim
       \widetilde{\mc O}_s\Bigl( \frac{\gamma^{1/(2s)} \epsilon^{1/s}}{L^{1/s} T^{1/(2s)} q^{1/s}\,(d + \eu R_2(\mu_0 \mmid \mu^{(a)}))^{1/2}} \Bigr)\,.
    \end{align*}
    This completes the proof.
\end{proof}

\section{Proof of the Main Results}\label{scn:proofs}

Firstly, we collect some results on feasible initializations from \cite{chewi2021analysis}.
Recall that $\pi^{(a)}$ is the modified distribution introduced in Appendix~\ref{scn:subgaussianty}. Let $$\pi_0 = \mc{N}(0, \varsigma I_d),$$ where $\varsigma = (2L + \beta)^{-1}$ is the variance of the Gaussian, and $\beta \asymp 1/T$ is the parameter appearing in the modified potential. The choice of $T$ will be assumption dependent, and we collect the conditions below under our main assumptions:
\begin{align*}
    T = \begin{cases}
    \widetilde{\Theta}\bigl(\frac{L+d}{\mf q(\gamma)}\bigr) & \text{$\pi$ satisfies \eqref{eq:pi}} \\
    \widetilde{\Theta}(\sqrt{L} C_{\msf{LSI}}) & \text{$\pi$ satisfies \eqref{eq:LSI}, or is strongly log-concave,}
    \end{cases}
\end{align*}
where $\mf q(\gamma)$ is defined in \eqref{eq:poincare_decay}.

\begin{lemma}[{Adapted from~\cite[Appendix A]{chewi2021analysis}}]\label{lem:initialization}
Suppose that $\pi$ satisfies~\eqref{eq:pi} and the H\"older continuity condition~\eqref{eq:holder}, as well as $\nabla U(0) = 0$, $U(0) - \min U \lesssim d$. Then the following two properties hold for $\pi_0 = \mc{N}(0, (2L + \beta)^{-1} I_d)$, where $\beta$ is the parameter appearing in the modified potential:
\begin{align*}
    \eu R_{q}(\pi_0 \mmid \pi) &\le \widetilde{\mc O}(\beta + L +  d)\,, \\
    \eu R_q(\pi_0 \mmid \pi^{(a)})& \le \widetilde{\mc O}(\beta + L + d)\,.
\end{align*}
\end{lemma}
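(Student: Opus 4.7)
The plan is to directly compute $\eu R_q(\pi_0 \mmid \pi)$ from the explicit Gaussian density of $\pi_0$ and use H\"older continuity of $\nabla U$ to control the log-density ratio. First I would expand
\begin{align*}
    \eu R_q(\pi_0 \mmid \pi)
    &= \log Z_\pi + U(0) - \frac{qd}{2\,(q-1)}\,\log(2\pi\varsigma) \\
    &\qquad{} + \frac{1}{q-1}\,\log\int \exp\Bigl(-\frac{q\,\norm{x}^2}{2\varsigma} + (q-1)\,(U(x)-U(0))\Bigr)\,\D x\,,
\end{align*}
where $\varsigma = (2L+\beta)^{-1}$. Since $\nabla U(0) = 0$, integrating \eqref{eq:holder} yields $\abs{U(x) - U(0)} \le L\,\norm x^{s+1}/(s+1) \le L\,(1+\norm x^2)/(s+1)$ for $s \in (0,1]$. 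With this choice of $\varsigma$, the resulting quadratic in the exponent remains strictly negative for all $q \in (1,2)$ bounded away from $1$, so the Gaussian integral is at most $(C/L)^{d/2}\,\exp((q-1)L/(s+1))$. The $\frac{1}{q-1}$ and $\frac{qd}{2(q-1)}$ factors combine cleanly (their divergences at $q=1$ cancel), yielding a total contribution of at most $\widetilde{\mc O}(L + d)$ to the R\'enyi divergence.

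The main obstacle is to control $\Phi \defeq U(0) + \log Z_\pi$, since H\"older smoothness alone provides no upper bound on $Z_\pi$. Here I would invoke Assumption~\ref{as:misc}: Markov's inequality applied to $\E_\pi \norm \cdot \le \mf m$ yields $\pi(\norm{\cdot} \le 2\mf m) \ge 1/2$, so that
\begin{align*}
    Z_\pi
    \le 2\int_{\norm x \le 2\mf m}\exp(-U(x))\,\D x
    \le 2\exp(-\min U)\,\mrm{vol}(B(0,2\mf m))\,.
\end{align*}
Combining this with $\min U \ge U(0) - \widetilde{\mc O}(d)$ (from the same assumption) and with $\mf m = \widetilde{\mc O}(d)$ gives $\Phi \le d\log(2\mf m) + \widetilde{\mc O}(d) = \widetilde{\mc O}(d)$. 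This is the crux of the argument and is where the mild hypotheses on $\mf m$ and $U(0) - \min U$ are consumed; together with the Gaussian integral bound it establishes the first assertion $\eu R_q(\pi_0 \mmid \pi) \le \widetilde{\mc O}(\beta + L + d)$.

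For the second assertion, I would rerun the same computation with $U$ replaced by $U^{(a)}$. Three adjustments arise. First, the gradient growth bound $\norm{\nabla U^{(a)}(x)} \le L + (\beta + L)\,\norm x$ from Lemma~\ref{lem:modified} replaces H\"older smoothness with effective constant on the order of $\beta + L$, giving $\abs{U^{(a)}(x) - U^{(a)}(0)} \le L\,\norm x + (\beta+L)\,\norm x^2/2$; with the same $\varsigma$, the exponent's quadratic coefficient is $-(q+1)L - \beta < 0$ and the Gaussian integral is of comparable size. Second, the sub-Gaussian tail $\pi^{(a)}(\norm{\cdot} \ge S + \eta) \le 2\exp(-\beta\eta^2/2)$ from Lemma~\ref{lem:modified} makes the volume-of-ball estimate automatic upon choosing $S \asymp \mf m$ and absorbing an additional slack of size $\widetilde{\mc O}(1/\sqrt\beta)$. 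Third, since $U^{(a)}(0) = U(0)$ and $U^{(a)}$ shares the same global minimum as $U$ (as $U^{(a)} \ge U$ with equality on $B(0,S)$, which contains the minimizer when $S$ is large enough), the hypothesis $U(0) - \min U \lesssim d$ transfers directly. Since $\beta \lesssim 1$ in our parameter regime, the final bound is identical up to constants.
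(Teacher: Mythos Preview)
Your proposal is correct and self-contained. The paper's own proof is simply the one-line citation ``Apply either \cite[Lemma~30]{chewi2021analysis} or \cite[Lemma~31]{chewi2021analysis}'', so there is no argument to compare against beyond the reference; your computation is essentially the standard one that reference carries out (expand the R\'enyi integral, use gradient growth to reduce to a Gaussian integral, and control $U(0)+\log Z_\pi$ via a volume bound on a ball of radius $O(\mf m)$).

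Two minor cleanups. First, the restriction to $q\in(1,2)$ is unnecessary: the quadratic coefficient $-\tfrac{q(2L+\beta)}{2}+\tfrac{(q-1)L}{s+1}$ is strictly negative for every $q>1$ and $s\in(0,1]$, and the cancellation you note at $q=1$ is exactly what makes the $d$-dependent terms combine to $\widetilde{\mc O}(d)$ uniformly. Second, for $\pi^{(a)}$ you do not need the minimizer of $U$ to lie in $B(0,S)$ (which is not guaranteed by $S\asymp\mf m$ alone). Since $U^{(a)}\ge U$ everywhere with equality at $0$, you get $U^{(a)}(0)-\min U^{(a)}\le U(0)-\min U\lesssim d$ directly, and likewise $Z_{\pi^{(a)}}\le Z_\pi$; this transfers the $\Phi$ bound without invoking the sub-Gaussian tail or locating the minimizer.
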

\begin{proof}
    Apply either~\cite[Lemma 30]{chewi2021analysis} or~\cite[Lemma 31]{chewi2021analysis}.
\end{proof}

From our analysis we take $\beta \lesssim L$, and if moreover $L \lesssim d$ then it is reasonable to expect that $\eu R_q(\pi_0 \mmid \pi), \eu R_q(\pi_0 \mmid \pi^{(a)}) \le \widetilde{\mc O}(d)$.
Let $\mu_0 = \pi_0 \otimes \rho$, so that $\eu R_q(\mu_0 \mmid \mu) = \eu R_q(\pi_0 \mmid \pi)$, and similarly $\eu R_q(\mu_0 \mmid \mu^{(a)}) = \eu R_q(\pi_0 \mmid \pi^{(a)})$.

The following lemma gives a bound on the value of the Fisher information at initialization.

\begin{lemma}\label{lem:lyapunov_initialization}
    Under the conditions of the previous lemma, the initialization $\mu_0 = \pi_0\otimes \rho$ also satisfies $\msf{FI}(\mu_0 \mmid \mu) \lesssim Ld + L^{1-s} d^s$.
\end{lemma}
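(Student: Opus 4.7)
The plan is to exploit the product structure of the initialization. Because $\mu_0 = \pi_0 \otimes \rho$ and $\mu = \pi \otimes \rho$ share the Gaussian velocity marginal $\rho$, the density ratio $\mu_0/\mu = \pi_0/\pi$ depends only on the position coordinate, so the velocity gradient of $\log(\mu_0/\mu)$ vanishes. This collapses the Fisher information to
$$
\msf{FI}(\mu_0 \mmid \mu) = \E_{\pi_0}\bigl[\|\nabla \log(\pi_0/\pi)(x)\|^2\bigr],
$$
reducing the problem to a standard score estimate on $\R^d$ between the Gaussian prior and the target marginal.

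Next, I would expand the score pointwise. With $\varsigma^{-1} = 2L + \beta$, one has $\nabla \log(\pi_0/\pi)(x) = -x/\varsigma + \nabla U(x)$, and the elementary inequality $\|a-b\|^2 \le 2\|a\|^2 + 2\|b\|^2$ separates the bound into a ``Gaussian piece'' and a ``gradient-of-potential piece,'' to be estimated individually under the Gaussian measure $\pi_0$.

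For the Gaussian piece, the moment identity $\E_{\pi_0}\|x\|^2 = d\varsigma$ gives $\E_{\pi_0}\|x/\varsigma\|^2 = d/\varsigma = d(2L+\beta) \lesssim Ld$, using $\beta \lesssim L$ as chosen in the paper. For the gradient piece, Assumption~\ref{as:misc} provides $\nabla U(0) = 0$, so the weak smoothness condition~\eqref{eq:holder} immediately yields $\|\nabla U(x)\|^2 \le L^2 \|x\|^{2s}$. Jensen's inequality applied to the concave map $r \mapsto r^s$ (valid precisely because $s \in (0,1]$) then produces
$$
\E_{\pi_0}\|x\|^{2s} \le (\E_{\pi_0}\|x\|^2)^s = (d\varsigma)^s \asymp (d/L)^s,
$$
giving a contribution of order at most $L^{2-s} d^s$, which in the relevant regime $d \gtrsim L$ collapses into the $Ld$ term, matching the claimed bound.

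There is no substantial obstacle here beyond careful bookkeeping of the interaction between $\varsigma$, $s$, $L$, and $d$. The only subtlety worth highlighting is that one must apply Jensen in the concave direction so as not to accrue an unnecessary dimension factor; for the overall downstream iteration complexity bounds only the leading $Ld$ term matters.
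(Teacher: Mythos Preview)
Your proposal is correct and follows essentially the same argument as the paper: both reduce to $\E_{\pi_0}[\lVert\nabla\log(\pi_0/\pi)\rVert^2]$ via the product structure, split the score into the Gaussian term $(2L+\beta)x$ and the potential term $\nabla U(x)$, bound the latter by $L\lVert x\rVert^s$ using $\nabla U(0)=0$ and H\"older continuity, and then apply Jensen's inequality to $\E_{\pi_0}\lVert x\rVert^{2s}$. Both computations in fact produce $Ld + L^{2-s}d^s$ for the two contributions; your observation that the second term is absorbed by $Ld$ when $d\gtrsim L$ is exactly how one reconciles this with the stated exponent.
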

\begin{proof}
     Note that as $\nabla U(0) = 0$, $\norm{\nabla \log \pi(x)}^2 = \norm{\nabla U(x)}^2 \leq L^2\, \norm{x}^{2s}$. Secondly, $\pi_0$ satisfies $\E_{x \sim \pi_0}[\norm{x}^2] \lesssim d/L$.
     Hence,
    \begin{align*}
        \msf{FI}(\mu_0 \mmid \mu)
        &= \E_{\pi_0}\Bigl[\Bigl\lVert \nabla \log \frac{\pi_0}{\pi} \Bigr\rVert^2\Bigr]
        \le \E_{x \sim \pi_0}[\norm{\nabla U(x) - (2L+\beta)\, x}^2] \\
        &\lesssim L^2 \E_{x \sim \mu_0}[\norm{x}^2 + \norm x^{2s}]
        \lesssim Ld + L^{1-s} d^s\,,
    \end{align*}
    where we used Jensen's inequality in the last step.
\end{proof}

\subsection{Poincar\'e Inequality}

\begin{proof}[Proof of Theorem~\ref{thm:chip_pi}]
The continuous-time result from Lemma \ref{lem:cont_time_pi} states that
\begin{align*}
    T \gtrsim \frac{1}{\mf{q}(\gamma)} \log\frac{\chi_2(\mu_0 \mmid \mu)}{\varepsilon^2} \implies \chi_2(\mu_T \mmid \mu) \leq \epsilon^2\,.
\end{align*}
Noting that there exists a feasible initialization such that $\log \chi_2(\mu_0 \mmid \mu) \le \widetilde{\mc O}(L + d)$, then this is satisfied if we choose $T = \widetilde {\mc{O}}(\frac{1}{\mf q(\gamma)}\,(L + d + \log\frac{1}{\varepsilon}))$. This also shows that $\eu R_2(\mu_T \mmid \mu) = \log(1 + \chi_2(\mu_T \mmid \mu)) \lesssim \epsilon^2$ for $\epsilon \lesssim 1$. 

Note the following decomposition (weak triangle inequality) for the R\'enyi divergence~\cite[see, e.g.,][Proposition 11]{mironov2017renyi}:
\begin{align*}
    \eu R_q(P_1\mmid P_2) \leq \frac{q-1/\mf c}{q-1}\,\eu R_{\mf c q}(P_1 \mmid P_3) + \eu R_{\mf d(q- 1/\mf c)}(P_3 \mmid P_2),
\end{align*}
for any valid H\"older conjugate pair $\mf c, \mf d$, i.e., $\frac{1}{\mf c} + \frac{1}{\mf d} = 1$, $\mf c, \mf d > 1$, and any three probability distributions $P_1, P_2, P_3$.

In our case, we let $q = 2-\xi$ and $\mf d (q - 1/\mf c) = 2$, so that after solving for $\mf c, \mf d$, we get the following for $\xi \leq 1/2$:
\begin{align*}
    \eu R_{2-\xi}(P_1 \mmid P_2) &\leq 2\eu R_{2/\xi}(P_1 \mmid P_3) + \eu R_2(P_3 \mmid P_2)\,.
\end{align*}
Consequently, let $P_1 = \hat \mu_{Nh}$, $P_2 = \mu$, $P_3 = \mu_{Nh}$, and combining this result with the discretization bound of Proposition \ref{prop:main_disc_bd}, we then obtain
\begin{align*}
    \eu R_{2-\xi}(\hat \mu_{Nh} \mmid \mu) \lesssim \eu R_{2/\xi}(\hat \mu_{Nh} \mmid \mu_{Nh}) + \eu R_2(\mu_{Nh} \mmid \mu) \lesssim \epsilon^2\,,
\end{align*}
so long as
\begin{align*}
    h &= \widetilde{\Theta} \Bigl(\frac{\gamma^{1/(2s)} \epsilon^{1/s} \xi^{1/s} \mf q(\gamma)^{1/(2s)}}{L^{1/s} d^{1/2} \,{(L \vee d)}^{1/(2s)}} \Bigr)\,,  \\
    N &= \widetilde{\Theta} \Bigl(\frac{L^{1/s} d^{1/2} \, {(L\vee d)}^{1+1/(2s)}}{\gamma^{1/(2s)} \epsilon^{1/s}  \xi^{1/s} \mf q(\gamma)^{1+1/(2s)}} \Bigr)\,.
\end{align*}
This completes the proof.
\end{proof}

\subsection{Log-Sobolev Inequality}

\subsubsection{KL Divergence}

\begin{proof}[Proof of Theorem~\ref{thm:kl_slc}]
    We provide the following Theorem in the twisted coordinates $(\phi, \psi)$, which were used in Lemma \ref{lem:lsi_ct}. Consider the decomposition of the $\msf{KL}$ using Cauchy--Schwarz:
    \begin{align*}
        \msf{KL}(\hat \mu_T^{\mc M} \mmid \mu^{\mc M}) &= \int \log \frac{\hat \mu_T^{\mc M}}{\mu^{\mc M}} \, \D \hat \mu_T^{\mc M} \\
        &= \msf{KL}(\hat \mu_T^{\mc M} \mmid \mu_T^{\mc M}) + \int \log \frac{\mu_T^{\mc M}}{\mu^{\mc M}} \, \D \hat \mu_T^{\mc M} \\
        &= \msf{KL}(\hat \mu_T^{\mc M} \mmid \mu_T^{\mc M}) + \msf{KL}(\mu_T^{\mc M} \mmid \mu^{\mc M}) + \int \log \frac{\mu_T^{\mc M}}{\mu^{\mc M}} \, \D(\hat \mu_T^{\mc M} - \mu_T^{\mc M}) \\
        &\leq \msf{KL}(\hat \mu_T^{\mc M} \mmid \mu_T^{\mc M}) 
 + \msf{KL}(\mu_T^{\mc M} \mmid \mu^{\mc M}) + \sqrt{\chi^2(\hat \mu_T^{\mc M} \mmid \mu_T^{\mc M})\times \msf{var}_{\mu_T^{\mc M}}\Bigl(\log \frac{\mu_T^{\mc M}}{\mu^{\mc M}}\Bigr)}\,.
    \end{align*}
    Using the log-Sobolev inequality of the iterates via Lemma~\ref{lem:lsi_ct}, we find (through the implication that a log-Sobolev inequality implies a Poincar\'e inequality with the same constant)
    \begin{align*}
        \msf{var}_{\mu_T^{\mc M}}\Bigl( \log \frac{\mu_T^{\mc M}}{\mu^{\mc M}}\Bigr)  \leq C_{\msf{LSI}}(\mu_T^{\mc M}) \E_{\mu_T^{\mc M}}\Bigl[\Bigl\lVert\nabla \log \frac{\mu_T^{\mc M}}{\mu^{\mc M}}\Bigr\rVert^2\Bigr]\,,
    \end{align*}
    where we substitute $\log \frac{\mu_T^{\mc M}}{\mu^{\mc M}}$ for the function in \eqref{eq:pi}.
    Here, $C_{\msf{LSI}}(\mu_T^{\mc M})\lesssim 1/m$ for all $t\ge 0$.

    Since $\mu^{\mc M} = \mc M_\# \mu$, then $\mu^{\mc M}(\phi,\psi) \propto \mu(\mc M^{-1}(\phi,\psi))$.
    Therefore,
    \begin{align*}
        \nabla \log \mu^{\mc M} = (\mc M^{-1})^\T\, \nabla \log \mu \circ \mc M^{-1}\,,
    \end{align*}
    and similarly for $\nabla \log \mu_T^{\mc M}$.
    This yields the expression
    \begin{align*}
        \E_{\mu_T^{\mc M}}\Bigl[\Bigl\lVert\nabla \log \frac{\mu_T^{\mc M}}{\mu^{\mc M}}\Bigr\rVert^2\Bigr]
        &= \E_{\mu_T}\Bigl[\Bigl\lVert (\mc M^{-1})^\T \,\nabla \log \frac{\mu_T}{\mu} \Bigr\rVert^2\Bigr]\,.
    \end{align*}
    Also, one has
    \begin{align*}
        \mc M^{-1}\,(\mc M^{-1})^\T
        &= \begin{bmatrix}
            1 & -\gamma/2 \\
            -\gamma/2 & \gamma^2/2
        \end{bmatrix}\,.
    \end{align*}
    For $c_0 > 0$ and $\mf M$ defined in Appendix~\ref{scn:entropic_hypo}, we have
    \begin{align*}
        L\,\mf M - c_0\,\mc M^{-1} \, (\mc M^{-1})^\T
        &= \begin{bmatrix}
            1/4-c_0 & \sqrt L\,(1/\sqrt 2 + c_0\sqrt 2) \\
            \sqrt L\,(1/\sqrt 2 + c_0\sqrt 2) & L\,(4-c_0)
        \end{bmatrix}\,.
    \end{align*}
    The determinant is $L\, ((\frac{1}{4}-c_0)\,(4-c_0) - (\frac{1}{\sqrt 2} + c_0\sqrt 2)^2) > 0$ for $c_0 > 0$ sufficiently small.
    This shows that $\mc M^{-1}\,(\mc M^{-1})^\T \preceq c_0^{-1} L\,\mf M$, and therefore
    \begin{align*}
        \E_{\mu_T^{\mc M}}\Bigl[\Bigl\lVert\nabla \log \frac{\mu_T^{\mc M}}{\mu^{\mc M}}\Bigr\rVert^2\Bigr]
        &\lesssim L\,\msf{FI}_{\mf M}(\mu_T \mmid \mu)\,.
    \end{align*}
    Here we define
    \begin{align*}
        \msf{FI}_{\mf M}(\mu' \mmid \mu) \defeq \E_{\mu'}\bigl[\bigl\lVert \mf M^{1/2}\, \nabla \log \frac{\mu'}{\mu} \bigr\rVert^2 \bigr]
    \end{align*}
    
     The decay of the Fisher information via Lemma~\ref{lem:lyapunov_decay} allows us to set
     \begin{align*}
        T \gtrsim C_{\msf{LSI}}\sqrt L \log \Bigl(\frac{\kappa}{\epsilon^2}\,\bigl(\msf{KL}(\mu_0 \mmid \mu) + \msf{FI}_{\mf M}(\mu_0 \mmid \mu)\bigr)\Bigr) \implies \msf{var}_{\mu_T^{\mc M}}\Bigl( \log \frac{\mu_T^{\mc M}}{\mu^{\mc M}}\Bigr) \lesssim \epsilon^2\,.
     \end{align*}
     The same choice of $T$ also ensures that $\msf{KL}(\mu_T^{\mc M} \mmid \mu^{\mc M}) \le \varepsilon^2$.
     From our initialization (Lemma \ref{lem:lyapunov_initialization}), we can naively estimate using that
     \begin{align*}
         \msf{FI}_{\mf M}(\mu_0 \mmid \mu)
         &\lesssim \frac{1}{L} \, \msf{FI}(\pi_0 \mmid \pi)
         \lesssim d\,,
     \end{align*}
     and $\msf{KL}(\mu_0 \mmid \mu) \lesssim d \log \kappa$, so that
     our condition on $T$ is (with $C_{\msf{LSI}} \le m^{-1}$)
     \begin{align*}
         T \ge \widetilde{\mc O}\Bigl(\frac{\sqrt{L}}{m} \log \frac{\kappa d}{\epsilon^2}\Bigr)\,.
     \end{align*}
    Recall as well that this requires $\gamma \asymp \sqrt{L}$.
     For the remaining $\chi^2(\hat \mu_T \mmid \mu_T)$ and $\msf{KL}(\hat \mu_T \mmid \mu_T)$ terms, we invoke Proposition \ref{prop:main_disc_bd} with the value of $T = Nh$ specified and desired accuracy $\epsilon$, and with $q = 2$ and $s=1$, which consequently yields
     \begin{align*}
         h = \widetilde{\Theta} \Bigl(\frac{\epsilon m^{1/2}}{Ld^{1/2}} \Bigr)\,,
     \end{align*}
    with 
      \begin{align*}
        N = \widetilde{\Theta} \Bigl(\frac{\kappa^{3/2} d^{1/2}}{\epsilon} \Bigr)
     \end{align*}
     (using $N = T/h$).
\end{proof}

\subsubsection{TV Distance}

\begin{proof}[Proof of Theorem~\ref{thm:tv_lsi}]
Notice first that the $\msf{TV}$ distance is a proper metric, and therefore satisfies the triangle inequality. Subsequently, by two applications of Pinsker's inequality,
\begin{align*}
    \norm{\hat \mu_{Nh} - \mu}_{\msf{TV}} &\leq \norm{\hat \mu_{Nh} - \mu_{Nh}}_{\msf{TV}} + \norm{\mu_{Nh} - \mu}_{\msf{TV}} \\
    &\lesssim \sqrt{\msf{KL}(\hat \mu_{Nh} \mmid \mu_{Nh})} + \sqrt{\msf{KL}(\mu_{Nh} \mmid \mu)}\,.
\end{align*}

These terms can be bounded separately. Analogous to the proof of the prior theorem, using Lemma \ref{lem:lyapunov_decay}, it suffices to take
     \begin{align*}
         T \ge \widetilde{\mc O}\Bigl(C_{\msf{LSI}}\sqrt L \log \frac{d}{\epsilon^2}\Bigr)\,,
     \end{align*}
and for the other term, it suffices to use Proposition \ref{prop:main_disc_bd} with any value of $q$, $\gamma \asymp \sqrt{L}$ which combined with the requirement on $T$ yields:
\begin{align*}
         h = \widetilde{\Theta} \Bigl(\frac{\epsilon}{C_{\msf{LSI}}^{1/2} L d^{1/2}} \Bigr)\,,
     \end{align*}
     with 
\begin{align*}
        N = \widetilde{\Theta} \Bigl(\frac{C_{\msf{LSI}}^{3/2} L^{3/2} d^{1/2}}{\epsilon} \Bigr)\,,
\end{align*}
(using $N = T/h$).
\end{proof}

\end{document}